
\documentclass{article}
\usepackage{fullpage}
\usepackage{subcaption}
\usepackage{amsthm,amsmath,bbm}
\theoremstyle{plain}
\newtheorem{theorem}{Theorem}[section]

\newtheorem{lemma}[theorem]{Lemma}

\theoremstyle{definition}
\newtheorem{definition}[theorem]{Definition}
\theoremstyle{remark}

\newtheorem{example}[theorem]{Example}


\newcommand\ifwithlilypond[2]{#2}
\newcommand\ifwithps[2]{#2}

\usepackage[utf8]{inputenc}
\usepackage{palatino}
\usepackage{amssymb}

\usepackage[table]{xcolor}
\usepackage{graphics}
\usepackage{pst-plot}
\ifwithps{\usepackage{tikz}\usetikzlibrary{turtle}}{}
\usepackage{multicol}
\usepackage{apptools,chngcntr} 

\renewcommand\leq\leqslant
\renewcommand\geq\geqslant
\newcommand\mut[1]{\ignorespaces}

\usepackage{alltt}
\newcommand\citet[1]{\cite{#1}}

\begin{document}

\title{Tempered Monoids of Real Numbers, the Golden Fractal Monoid, and the Well-Tempered Harmonic Semigroup \\ {\small Published in Semigroup Forum, Springer, vol. 99, n. 2, pp. 496-516, November 2019. ISSN: 0037-1912.} }

\author{Maria Bras-Amorós}

\maketitle

\begin{abstract}
This paper deals with the algebraic structure of the sequence of harmonics when combined with equal temperaments. Fractals and the golden ratio appear surprisingly on the way.

The sequence of physical harmonics is an increasingly enumerable submonoid of $({\mathbb R}^+,+)$ whose pairs of consecutive terms get arbitrarily close as they grow. These properties suggest the definition of a new mathematical object which we denote a {\em tempered monoid}. Mapping the elements of the tempered monoid of physical harmonics from ${\mathbb R}$ to ${\mathbb N}$ may be considered tantamount to defining equal temperaments. The number of equal parts of the octave in an equal temperament corresponds to the multiplicity of the related numerical semigroup.
  
Analyzing the sequence of musical harmonics we derive two important properties that tempered monoids may have: that of being product-compatible and that of being fractal. We demonstrate that, up to normalization, there is only one product-compatible tempered monoid, which is the logarithmic monoid, and there is only one nonbisectional fractal monoid which is generated by the golden ratio. 

The example of half-closed cylindrical pipes imposes a third property to the sequence of musical harmonics, the so-called odd-filterability property.

We prove that the maximum number of equal divisions of the octave such that the discretizations of the golden fractal monoid and the logarithmic monoid coincide, and such that the discretization is odd-filterable is $12$. This is nothing else but the number of equal divisions of the octave in classical Western music.
\end{abstract}

{\bf Keywords:}
musical harmonics; equal temperament; monoids; increasing enumeration; numerical semigroup; tempered monoid; logarithm; fractal; golden ratio

{\bf Classification:} \textit{2010 Mathematics Subject Classification}: 00A65; 20M14

\section{Introduction}
The science of acoustics describes how harmonics of a given fundamental tone appear together with the fundamental tone when this one is played by a mechanical musical instrument. The way harmonics arise in each instrument describes its timbre. This paper deals with the algebraic structure of the sequence of harmonics when combined with equal temperaments. Fractals and the golden ratio appear surprisingly on the way.

\subsection*{Hidden fractal patterns in music} Fractal geometry, coined by Mandelbrot \cite{Mandelbrot75,Mandelbrot82},
studies self-similarity, appearing when each small piece of a shape contains a scaled copy of the whole shape, and, in turn, each small piece of this scaled copy contains an even smaller scaled copy of the whole shape, and so on. This self-similarity pattern has then been observed in many other fields apart from geometry. Music has not been an exception \cite{VossClarke,HsuHsu,Gardner,Madden,Beran}. Brothers, among his vast literature (see \cite{Brothers} and all the references in the chapter) identifies several ways of scaling in music: duration, pitch, melodic intervals, melodic moments, harmonic intervals, structure, and melodic or rhythmic motivic scaling. This way, fractal patterns are identified in scores by Bach, Mozart, Ravel, Chopin, Beethoven, Strauss, Debussy, and even The Beatles. 
Wuorinen
\cite{Wuorinen}
relaxes the notion of self-similarity to self-affinity and
specifies three levels at which the fractal characteristics of music seem to manifest themselves: the acoustic signal, pitch and rhythm, and structure. The three levels form a progression from the concrete-physical to the metaphorical-structural, and thus, increasing in artistic significance while diminishing in specificity. On his side, Madden \cite{Madden} points at fractal behaviour of the harmonic sequence but reducing it to its logarithmic behaviour. Our discussion starts there. Wuorinen \cite{Wuorinen} postulates that although the musical sound has been traditionally divided into pitch, rhythm, timbre and loudness, only pitch and rhythm can be organized in a fractal way. He asserts that ``{\em timbre and loudness seem not to have fractal characteristics as they figure in music}''.
Frame and Urry
\cite{FrameUrry}, state that ``{\em fractal aspects of music are patterns hidden by our sequential perception of music. What other fractal patterns will music reveal?}" We can say that in this paper we reveal one of such fractal patterns mentioned by Frame and Urry \cite{FrameUrry}. Our pattern is related to the algebraic structure of the sequence of harmonics when combined with equal temperaments, and so, indirectly related to timbre as addressed by Wuorinen.

\subsection*{The golden ratio} The golden ratio is, if not the most, one of the most popular irrational numbers. It appears in nature and in art and it is strongly related to the equally popular Fibonacci numbers. The literature about the golden ratio, also called the divine proportion, is enormous and it is not our aim to be exhaustive. We just mention the book \cite{Livio} and refer the reader to all the references therein. Music analysts speculate about the presence of the divine proportion in many compositions of several authors such as Bach and Mozart, or more insistently in Béla Bartók's scores. Certainly, many 20th and 21st century composers have used it consciously in their compositions. Our fractal patterns will be indeed generated by the golden ratio.

\subsection*{The harmonic sequence}
Let us return to the phenomenon of harmonics. Tones are essentially wave frequencies and the harmonics of a fundamental tone correspond to integer multiples of that frequency. 
\begin{figure}
  \begin{center}
    \ifwithps{\resizebox{\textwidth}{!}{\begin{tabular}{lll}
string & cylindrical open pipe & cylindrical closed pipe
\\
\begin{pspicture}(0,-1)(5,.5)
  \psaxes[yAxis=false,labels=none,ticksize=0,linewidth=.05](0,0)(4,0)
  \psaxes[yAxis=false,labels=none,ticksize=0,linewidth=.01]{<->}(0,-.5)(4,-.5)
\rput(2,-.65){\scalebox{.5}{$L$}}
\end{pspicture}
&
\begin{pspicture}(0,-1)(5,.5)
\psaxes[yAxis=false,labels=none,ticksize=0,linewidth=.05](0,-.25)(4,-.25)
\psaxes[yAxis=false,labels=none,ticksize=0,linewidth=.05](0,.25)(4,.25)
\psaxes[yAxis=false,labels=none,ticksize=0,linewidth=.01]{<->}(0,-.5)(4,-.5)
\rput(2,-.65){\scalebox{.5}{$L$}}
\end{pspicture}
&
\begin{pspicture}(0,-1)(5,.5)
\psaxes[yAxis=false,labels=none,ticksize=0,linewidth=.05](0,-.25)(4,-.25)
\psaxes[yAxis=false,labels=none,ticksize=0,linewidth=.05](0,.25)(4,.25)
\psaxes[xAxis=false,labels=none,ticksize=0,linewidth=.05](0,-.25)(0,.25)
\psaxes[yAxis=false,labels=none,ticksize=0,linewidth=.01]{<->}(0,-.5)(4,-.5)
\rput(2,-.65){\scalebox{.5}{$L$}}
\end{pspicture}
\\  
string vibration &
motion of the air &
motion of the air
\\
\begin{pspicture}(0,-.5)(5,.5)
\rput[l](4.05,.15){\scalebox{.75}{$\nu=\nu_0$}}
\rput[l](4.05,-.15){\scalebox{.75}{$\lambda_0=2L$}}
\psaxes[yAxis=false,labels=none,Dx=4,ticksize=-.5 .5,linecolor=gray,linewidth=.02,linestyle=dashed](0,0)(4,0)
\psplot[plotstyle=curve,plotpoints=360]{0}{4}{45 x mul sin 2 div}
\psplot[plotstyle=curve,plotpoints=360]{0}{4}{0 45 x mul sin sub 2 div}
\end{pspicture}
&
\begin{pspicture}(0,-.5)(5,.5)
  \rput[l](4.05,.15){\scalebox{.75}{$\nu=\nu_0$}}
  \rput[l](4.05,-.15){\scalebox{.75}{$\lambda_0=2L$}}
\psaxes[yAxis=false,labels=none,Dx=4,ticksize=-.5 .5,linecolor=gray,linewidth=.02,linestyle=dashed](0,0)(4,0)
\psplot[plotstyle=curve,plotpoints=360]{0}{4}{45 x mul cos 2 div}
\psplot[plotstyle=curve,plotpoints=360]{0}{4}{0 45 x mul cos sub 2 div}
\end{pspicture}
&
\begin{pspicture}(0,-.5)(5,.5)
  \rput[l](4.05,.15){\scalebox{.75}{$\nu=\nu_0$}}
  \rput[l](4.05,-.15){\scalebox{.75}{$\lambda_0=4L$}}
\psaxes[yAxis=false,labels=none,Dx=4,ticksize=-.5 .5,linecolor=gray,linewidth=.02,linestyle=dashed](0,0)(4,0)
\psplot[plotstyle=curve,plotpoints=360]{0}{4}{22.5 x mul sin 2 div}
\psplot[plotstyle=curve,plotpoints=360]{0}{4}{0 22.5 x mul sin sub 2 div}
\end{pspicture}
\\
\begin{pspicture}(0,-.5)(5,.5)
  \rput[l](4.05,.15){\scalebox{.75}{$\nu_1=2\nu_0$}}
    \rput[l](4.05,-.15){\scalebox{.75}{$\lambda_1=L$}}
\psaxes[yAxis=false,labels=none,Dx=4,ticksize=-.5 .5,linecolor=gray,linewidth=.02,linestyle=dashed](0,0)(4,0)
\psplot[plotstyle=curve,plotpoints=360]{0}{4}{90 x mul sin 2 div}
\psplot[plotstyle=curve,plotpoints=360]{0}{4}{0 90 x mul sin sub 2 div}
\end{pspicture}
&
\begin{pspicture}(0,-.5)(5,.5)
  \rput[l](4.05,.15){\scalebox{.75}{$\nu_1=2\nu_0$}}
      \rput[l](4.05,-.15){\scalebox{.75}{$\lambda_1=L$}}
\psaxes[yAxis=false,labels=none,Dx=4,ticksize=-.5 .5,linecolor=gray,linewidth=.02,linestyle=dashed](0,0)(4,0)
\psplot[plotstyle=curve,plotpoints=360]{0}{4}{90 x mul cos 2 div}
\psplot[plotstyle=curve,plotpoints=360]{0}{4}{0 90 x mul cos sub 2 div}
\end{pspicture}
&
\begin{pspicture}(0,-.5)(5,.5)
  \rput[l](4.05,.15){\scalebox{.75}{$\nu_1=3\nu_0$}}
      \rput[l](4.05,-.15){\scalebox{.75}{$\lambda_1=4L/3$}}
\psaxes[yAxis=false,labels=none,Dx=4,ticksize=-.5 .5,linecolor=gray,linewidth=.02,linestyle=dashed](0,0)(4,0)
\psplot[plotstyle=curve,plotpoints=360]{0}{4}{67.5 x mul sin 2 div}
\psplot[plotstyle=curve,plotpoints=360]{0}{4}{0 67.5 x mul sin sub 2 div}
\end{pspicture}
\\
\begin{pspicture}(0,-.5)(5,.5)
\rput[l](4.05,.15){\scalebox{.75}{$\nu_2=3\nu_0$}}
\rput[l](4.05,-.15){\scalebox{.75}{$\lambda_2=2L/3$}}
\psaxes[yAxis=false,labels=none,Dx=4,ticksize=-.5 .5,linecolor=gray,linewidth=.02,linestyle=dashed](0,0)(4,0)
\psplot[plotstyle=curve,plotpoints=360]{0}{4}{135 x mul sin 2 div}
\psplot[plotstyle=curve,plotpoints=360]{0}{4}{0 135 x mul sin sub 2 div}
\end{pspicture}
&
\begin{pspicture}(0,-.5)(5,.5)
\rput[l](4.05,.15){\scalebox{.75}{$\nu_2=3\nu_0$}}
\rput[l](4.05,-.15){\scalebox{.75}{$\lambda_2=2L/3$}}
\psaxes[yAxis=false,labels=none,Dx=4,ticksize=-.5 .5,linecolor=gray,linewidth=.02,linestyle=dashed](0,0)(4,0)
\psplot[plotstyle=curve,plotpoints=360]{0}{4}{135 x mul cos 2 div}
\psplot[plotstyle=curve,plotpoints=360]{0}{4}{0 135 x mul cos sub 2 div}
\end{pspicture}
&
\begin{pspicture}(0,-.5)(5,.5)
\rput[l](4.05,.15){\scalebox{.75}{$\nu_2=5\nu_0$}}
\rput[l](4.05,-.15){\scalebox{.75}{$\lambda_2=4L/5$}}
\psaxes[yAxis=false,labels=none,Dx=4,ticksize=-.5 .5,linecolor=gray,linewidth=.02,linestyle=dashed](0,0)(4,0)
\psplot[plotstyle=curve,plotpoints=360]{0}{4}{112.5 x mul sin 2 div}
\psplot[plotstyle=curve,plotpoints=360]{0}{4}{0 112.5 x mul sin sub 2 div}
\end{pspicture}
\\
\begin{pspicture}(0,-.5)(5,.5)
\rput[l](4.05,.15){\scalebox{.75}{$\nu_3=4\nu_0$}}
\rput[l](4.05,-.15){\scalebox{.75}{$\lambda_3=L/2$}}
\psaxes[yAxis=false,labels=none,Dx=4,ticksize=-.5 .5,linecolor=gray,linewidth=.02,linestyle=dashed](0,0)(4,0)
\psplot[plotstyle=curve,plotpoints=360]{0}{4}{180 x mul sin 2 div}
\psplot[plotstyle=curve,plotpoints=360]{0}{4}{0 180 x mul sin sub 2 div}
\end{pspicture}
&
\begin{pspicture}(0,-.5)(5,.5)
\rput[l](4.05,.15){\scalebox{.75}{$\nu_3=4\nu_0$}}
\rput[l](4.05,-.15){\scalebox{.75}{$\lambda_3=L/2$}}
\psaxes[yAxis=false,labels=none,Dx=4,ticksize=-.5 .5,linecolor=gray,linewidth=.02,linestyle=dashed](0,0)(4,0)
\psplot[plotstyle=curve,plotpoints=360]{0}{4}{180 x mul cos 2 div}
\psplot[plotstyle=curve,plotpoints=360]{0}{4}{0 180 x mul cos sub 2 div}
\end{pspicture}
&
\begin{pspicture}(0,-.5)(5,.5)
\rput[l](4.05,.15){\scalebox{.75}{$\nu_3=7\nu_0$}}
\rput[l](4.05,-.15){\scalebox{.75}{$\lambda_3=4L/7$}}
\psaxes[yAxis=false,labels=none,Dx=4,ticksize=-.5 .5,linecolor=gray,linewidth=.02,linestyle=dashed](0,0)(4,0)
\psplot[plotstyle=curve,plotpoints=360]{0}{4}{157.5 x mul sin 2 div}
\psplot[plotstyle=curve,plotpoints=360]{0}{4}{0 157.5 x mul sin sub 2 div}
\end{pspicture}
\\
\mut{
\begin{pspicture}(0,-.5)(5,.5)
\rput[l](4.05,.15){\scalebox{.75}{$\nu_4=5\nu_0$}}
\rput[l](4.05,-.15){\scalebox{.75}{$\lambda_4=2L/5$}}
\psaxes[yAxis=false,labels=none,Dx=4,ticksize=-.5 .5,linecolor=gray,linewidth=.02,linestyle=dashed](0,0)(4,0)
\psplot[plotstyle=curve,plotpoints=360]{0}{4}{225 x mul sin 2 div}
\psplot[plotstyle=curve,plotpoints=360]{0}{4}{0 225 x mul sin sub 2 div}
\end{pspicture}
&
\begin{pspicture}(0,-.5)(5,.5)
\rput[l](4.05,.15){\scalebox{.75}{$\nu_4=5\nu_0$}}
\rput[l](4.05,-.15){\scalebox{.75}{$\lambda_4=2L/5$}}
\psaxes[yAxis=false,labels=none,Dx=4,ticksize=-.5 .5,linecolor=gray,linewidth=.02,linestyle=dashed](0,0)(4,0)
\psplot[plotstyle=curve,plotpoints=360]{0}{4}{225 x mul cos 2 div}
\psplot[plotstyle=curve,plotpoints=360]{0}{4}{0 225 x mul cos sub 2 div}
\end{pspicture}
&
\begin{pspicture}(0,-.5)(5,.5)
\rput[l](4.05,.15){\scalebox{.75}{$\nu_4=9\nu_0$}}
\rput[l](4.05,-.15){\scalebox{.75}{$\lambda_4=4L/9$}}
\psaxes[yAxis=false,labels=none,Dx=4,ticksize=-.5 .5,linecolor=gray,linewidth=.02,linestyle=dashed](0,0)(4,0)
\psplot[plotstyle=curve,plotpoints=360]{0}{4}{202.5 x mul sin 2 div}
\psplot[plotstyle=curve,plotpoints=360]{0}{4}{0 202.5 x mul sin sub 2 div}
\end{pspicture}
\\
\begin{pspicture}(0,-.5)(5,.5)
\rput[l](4.05,.15){\scalebox{.75}{$\nu_5=6\nu_0$}}
\rput[l](4.05,-.15){\scalebox{.75}{$\lambda_5=L/3$}}
\psaxes[yAxis=false,labels=none,Dx=4,ticksize=-.5 .5,linecolor=gray,linewidth=.02,linestyle=dashed](0,0)(4,0)
\psplot[plotstyle=curve,plotpoints=360]{0}{4}{270 x mul sin 2 div}
\psplot[plotstyle=curve,plotpoints=360]{0}{4}{0 270 x mul sin sub 2 div}
\end{pspicture}
&
\begin{pspicture}(0,-.5)(5,.5)
\rput[l](4.05,.15){\scalebox{.75}{$\nu_5=6\nu_0$}}
\rput[l](4.05,-.15){\scalebox{.75}{$\lambda_5=L/3$}}
\psaxes[yAxis=false,labels=none,Dx=4,ticksize=-.5 .5,linecolor=gray,linewidth=.02,linestyle=dashed](0,0)(4,0)
\psplot[plotstyle=curve,plotpoints=360]{0}{4}{270 x mul cos 2 div}
\psplot[plotstyle=curve,plotpoints=360]{0}{4}{0 270 x mul cos sub 2 div}
\end{pspicture}
&
\begin{pspicture}(0,-.5)(5,.5)
\rput[l](4.05,.15){\scalebox{.75}{$\nu_5=11\nu_0$}}
\rput[l](4.05,-.15){\scalebox{.75}{$\lambda_5=4L/11$}}
\psaxes[yAxis=false,labels=none,Dx=4,ticksize=-.5 .5,linecolor=gray,linewidth=.02,linestyle=dashed](0,0)(4,0)
\psplot[plotstyle=curve,plotpoints=360]{0}{4}{247.5 x mul sin 2 div}
\psplot[plotstyle=curve,plotpoints=360]{0}{4}{0 247.5 x mul sin sub 2 div}
\end{pspicture}
\\
\begin{pspicture}(0,-.5)(5,.5)
\rput[l](4.05,.15){\scalebox{.75}{$\nu_6=7\nu_0$}}
\rput[l](4.05,-.15){\scalebox{.75}{$\lambda_6=2L/7$}}
\psaxes[yAxis=false,labels=none,Dx=4,ticksize=-.5 .5,linecolor=gray,linewidth=.02,linestyle=dashed](0,0)(4,0)
\psplot[plotstyle=curve,plotpoints=360]{0}{4}{315 x mul sin 2 div}
\psplot[plotstyle=curve,plotpoints=360]{0}{4}{0 315 x mul sin sub 2 div}
\end{pspicture}
&
\begin{pspicture}(0,-.5)(5,.5)
\rput[l](4.05,.15){\scalebox{.75}{$\nu_6=7\nu_0$}}
\rput[l](4.05,-.15){\scalebox{.75}{$\lambda_6=2L/7$}}
\psaxes[yAxis=false,labels=none,Dx=4,ticksize=-.5 .5,linecolor=gray,linewidth=.02,linestyle=dashed](0,0)(4,0)
\psplot[plotstyle=curve,plotpoints=360]{0}{4}{315 x mul cos 2 div}
\psplot[plotstyle=curve,plotpoints=360]{0}{4}{0 315 x mul cos sub 2 div}
\end{pspicture}
&
\begin{pspicture}(0,-.5)(5,.5)
\rput[l](4.05,.15){\scalebox{.75}{$\nu_6=13\nu_0$}}
\rput[l](4.05,-.15){\scalebox{.75}{$\lambda_6=4L/13$}}
\psaxes[yAxis=false,labels=none,Dx=4,ticksize=-.5 .5,linecolor=gray,linewidth=.02,linestyle=dashed](0,0)(4,0)
\psplot[plotstyle=curve,plotpoints=360]{0}{4}{292.5 x mul sin 2 div}
\psplot[plotstyle=curve,plotpoints=360]{0}{4}{0 292.5 x mul sin sub 2 div}
\end{pspicture}
\\
\begin{pspicture}(0,-.5)(5,.5)
\rput[l](4.05,.15){\scalebox{.75}{$\nu_7=8\nu_0$}}
\rput[l](4.05,-.15){\scalebox{.75}{$\lambda_7=L/4$}}
\psaxes[yAxis=false,labels=none,Dx=4,ticksize=-.5 .5,linecolor=gray,linewidth=.02,linestyle=dashed](0,0)(4,0)
\psplot[plotstyle=curve,plotpoints=360]{0}{4}{360 x mul sin 2 div}
\psplot[plotstyle=curve,plotpoints=360]{0}{4}{0 360 x mul sin sub 2 div}
\end{pspicture}
&
\begin{pspicture}(0,-.5)(5,.5)
\rput[l](4.05,.15){\scalebox{.75}{$\nu_7=8\nu_0$}}
\rput[l](4.05,-.15){\scalebox{.75}{$\lambda_7=L/4$}}
\psaxes[yAxis=false,labels=none,Dx=4,ticksize=-.5 .5,linecolor=gray,linewidth=.02,linestyle=dashed](0,0)(4,0)
\psplot[plotstyle=curve,plotpoints=360]{0}{4}{360 x mul cos 2 div}
\psplot[plotstyle=curve,plotpoints=360]{0}{4}{0 360 x mul cos sub 2 div}
\end{pspicture}
&
\begin{pspicture}(0,-.5)(5,.5)
\rput[l](4.05,.15){\scalebox{.75}{$\nu_7=15\nu_0$}}
\rput[l](4.05,-.15){\scalebox{.75}{$\lambda_7=4L/15$}}
\psaxes[yAxis=false,labels=none,Dx=4,ticksize=-.5 .5,linecolor=gray,linewidth=.02,linestyle=dashed](0,0)(4,0)
\psplot[plotstyle=curve,plotpoints=360]{0}{4}{337.5 x mul sin 2 div}
\psplot[plotstyle=curve,plotpoints=360]{0}{4}{0 337.5 x mul sin sub 2 div}
\end{pspicture}
}
\end{tabular}}}
{\resizebox{.9\textwidth}{!}{\includegraphics{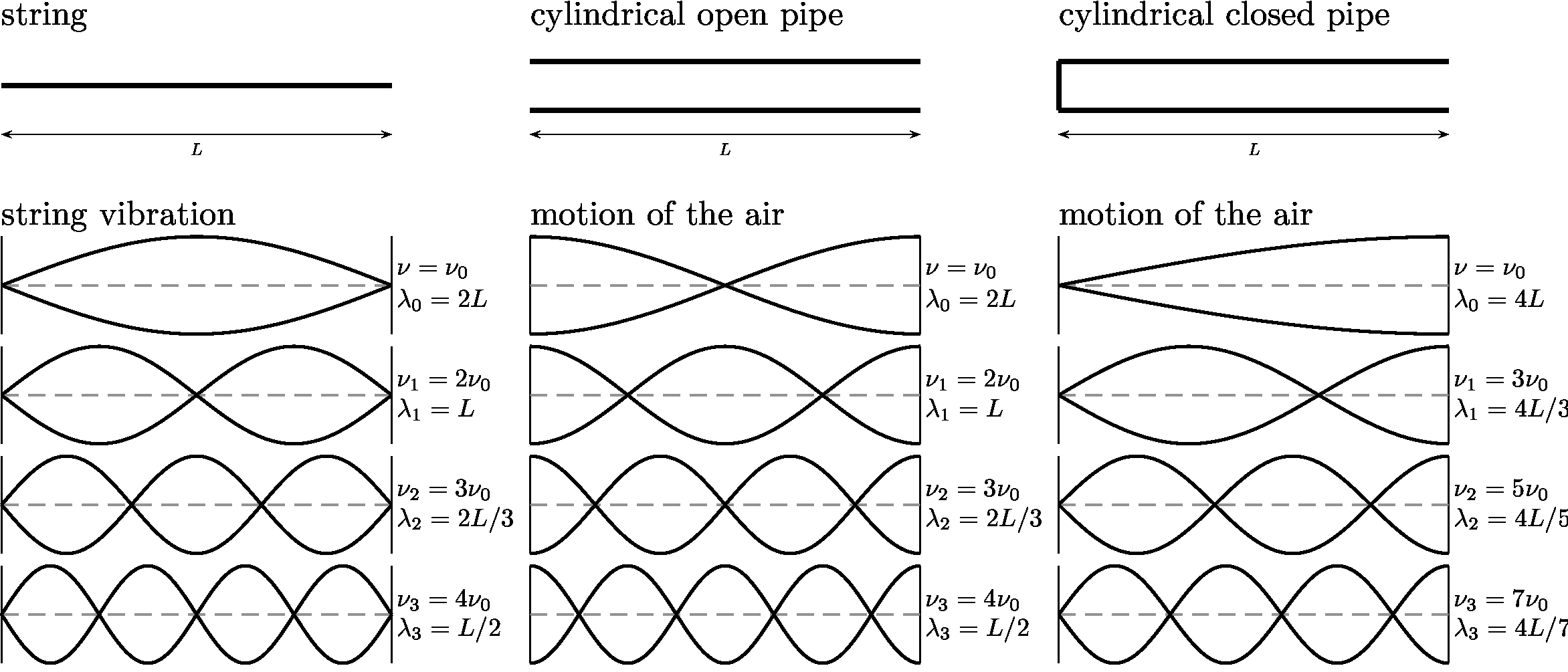}}}
\caption{Three models for standing waves and their harmonics: a string, a cylindrical open pipe, and cylindrical half-closed pipe.}
\label{f:waves}
\end{center}
\end{figure}
Figure~\ref{f:waves} illustrates three different models of standing waves. The first one corresponds to the vibration of a string with fixed ends,
the second one corresponds to the motion of the air inside a cylindrical pipe with two open ends such as a flute, and the third one corresponds to the motion of the air inside a cylindrical pipe with one open end and one closed end, such as a clarinet or an organ pipe.
In the three cases, the first wave represents a fundamental tone and the subsequent waves represent the harmonics associated with the fundamental tone.
In the example of the string and the open pipe, the frequencies of the harmonics are all integer multiples of the frequency of the fundamental, while in the example of a pipe with one open end and one closed end, the frequencies of the harmonics are all odd multiples of the frequency of the fundamental.
The octave corresponds to a ratio of frequencies
equal to $2$.

\subsection*{Equal temperaments} In twelve-tone equal temperament the octave is divided into $12$ equal semitones and the scale is tuned so that the ratio of the frequencies of consecutive semitones is $\sqrt[12]{2}$. 
Thus, the ratio of the frequencies corresponding to a tone (i.e. two
semitones) is $(\sqrt[12]{2})^2$, the ratio of frequencies
corresponding to a minor third (a tone plus a semitone) is $(\sqrt[12]{2})^3$, and so on.
In particular, the ratio of the frequencies corresponding to an octave is $(\sqrt[12]{2})^{12}=2$.
Conversely, the number of semitones between notes of frequencies $f_1$ and $f_2$ is $\log_{\sqrt[12]{2}}(f_1/f_2)=12\log_2(f_1/f_2)$.
If another number, say $n$, of equal divisions of the octave is used,
then the frequencies of consecutive notes differ by a factor of
$\sqrt[n]{2}$, so that the number of $n$-th divisions
between notes with frequencies $f_1$ and $f_2$ is $n\log_{2}(f_1/f_2)$.

\subsection*{Matching the harmonic sequence and equal temperaments} The frequencies of any equal temperament do not match the frequencies of the pure notes in the harmonic series, since the frequencies of pure harmonic notes correspond to integer multiples of the frequency of the fundamental note while the frequencies in equal temperament correspond, except for the octaves, to nonrational multiples of the frequency of the fundamental. In this paper we deal with the properties of the harmonic series when matched to equal temperaments, that is, when the pure-harmonic tones are approximated by their neighboring counterparts in equal temperament.

\mut{
\newcommand\teclablanca[2]{\filldraw[thick,black,fill=black!#2](#1,2.5)[turtle={right=90,forward=1,right=90,forward=2.5,right=90,forward=1,right=90,forward=2.5}];}
\newcommand\teclanegra[2]{\filldraw[thick,black,fill=black!#2](#1,2.5)[turtle={right=90,forward=.8,right=90,forward=1.5,right=90,forward=.8,right=90,forward=1.5}];}

\begin{figure}
\begin{center}

\ifwithps{
\newcommand\keyboardI[8]{%
    \def\tempa{#1}%
    \def\tempb{#2}%
    \def\tempc{#3}%
    \def\tempd{#4}%
    \def\tempe{#5}%
    \def\tempf{#6}%
    \def\tempg{#7}%
    \def\temph{#8}%
}
\newcommand\keyboardII[5]{
\noindent\resizebox{.7\textwidth}{!}{\begin{tikzpicture}
\teclablanca{0}{0}
\teclablanca{1}{0}
\teclablanca{2}{0}
\teclablanca{3}{0}
\teclablanca{4}{0}
\teclablanca{5}{0}
\teclablanca{6}{0}
\teclablanca{7}{0}

\teclanegra{0.6}{20}
\teclanegra{1.6}{20}
\teclanegra{3.6}{20}
\teclanegra{4.6}{20}
\teclanegra{5.6}{20}

\node at (0.5,.5) {\tempa};
\node at (1.,1.5) {{\normalsize \tempb}};
\node at (1.5,.5) {\tempc};
\node at (2.,1.5) {{\normalsize \tempd}};
\node at (2.5,.5) {\tempe};
\node at (3.5,.5) {\tempf};
\node at (4.,1.5) {{\normalsize \tempg}};
\node at (4.5,.5) {\temph};
\node at (5.,1.5) {{\normalsize #1}};
\node at (5.5,.5) {#2};
\node at (6.,1.5) {{\normalsize #3}};
\node at (6.5,.5) {#4};
\node at (7.5,.5) {#5};
\end{tikzpicture}}}

\keyboardI{$\footnotesize{1}$}{$\footnotesize{{\sqrt[12\,]{2}}}$}{$\footnotesize{{\sqrt[12\,]{2}}^{\,2}}$}{$\footnotesize{{\sqrt[12\,]{2}}^{\,3}}$}{$\footnotesize{{\sqrt[12\,]{2}}^{\,4}}$}{$\footnotesize{{\sqrt[12\,]{2}}^{\,5}}$}{$\footnotesize{{\sqrt[12\,]{2}}^{\,6}}$}{$\footnotesize{{\sqrt[12\,]{2}}^{\,7}}$}
\keyboardII{$\footnotesize{{\sqrt[12\,]{2}}^{\,8}}$}{$\footnotesize{{\sqrt[12\,]{2}}^{\,9}}$}{$\footnotesize{{\sqrt[12\,]{2}}^{10}}$}{$\footnotesize{{\sqrt[12\,]{2}}^{\,11}}$}{$2$}
}{\resizebox{.7\textwidth}{!}{\includegraphics{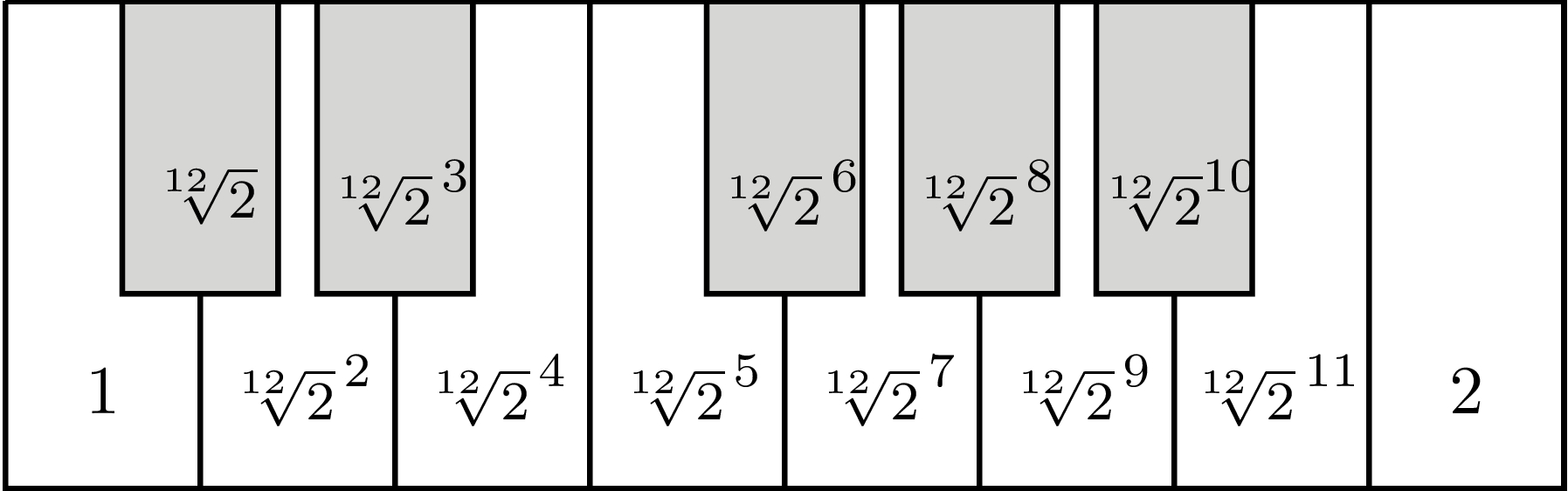}}}

\caption{Frequency ratios of scale notes with respect to the fundamental note in twelve-tone equal temperament.}  \label{f:teclat}
  \end{center}
\end{figure}
}

If one focuses on the example of the string with fixed ends, the notes corresponding to the harmonic series of C2 are approximately the ones in Figure~\ref{f:wodd} (a), when approximated by the twelve-tone equal temperament.
The same notes appear in cylindrical open pipes.
In the example of cylindrical pipes with one open end and one closed end, only half of these harmonics appear, namely, the ones in Figure~\ref{f:wodd} (b).

\begin{figure}
\begin{center}
\begin{tabular}{cc}
\resizebox{.55\textwidth}{!}{
\ifwithlilypond 
{\lilypondfile[staffsize=16]{whiteharmonics.ly}}
{\includegraphics{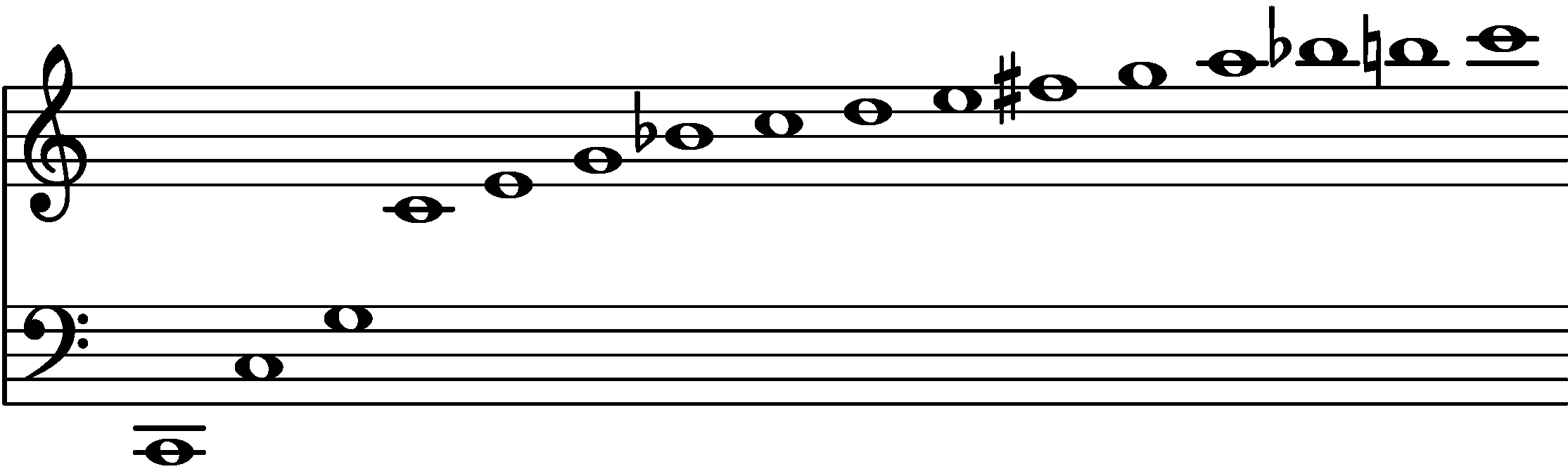}}}
&
\resizebox{.35\textwidth}{!}{
\ifwithlilypond{\lilypondfile[staffsize=16]{oddharmonics.ly}}
{\includegraphics{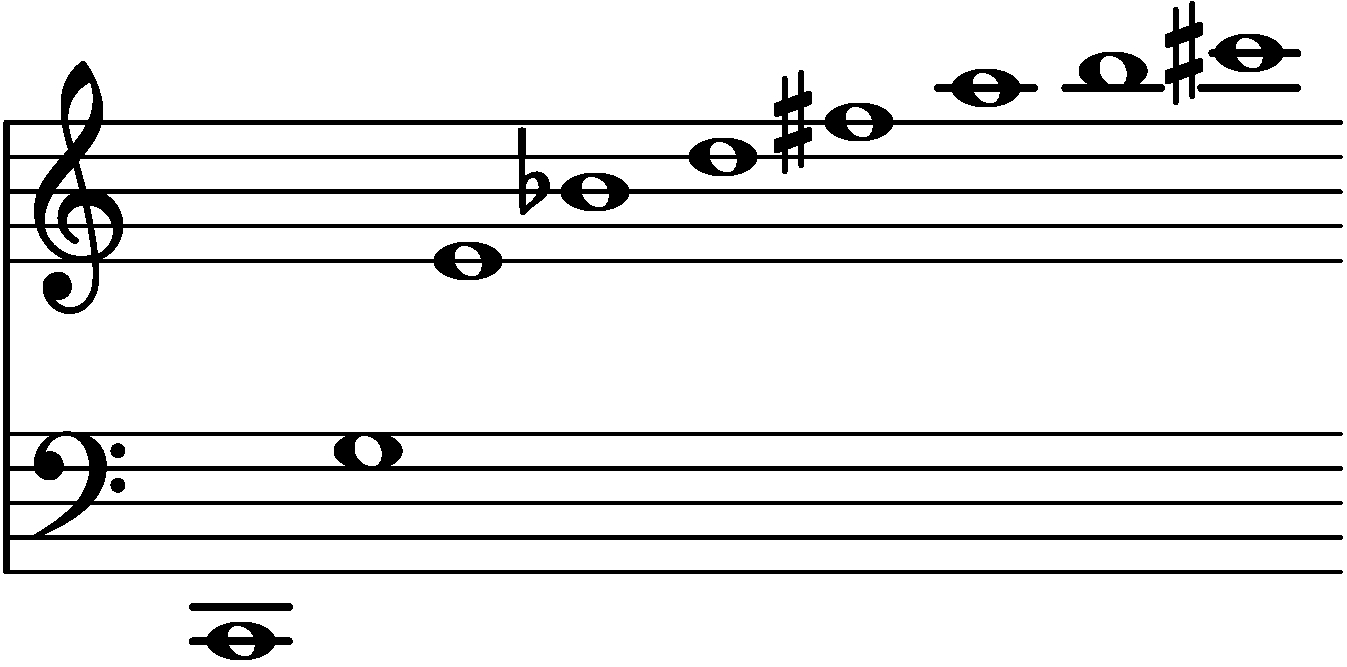}}}
\\
(a)&(b)
\end{tabular}
\end{center}

  \caption{(a) Approximation of the harmonics produced in a string with fixed ends, when translated to twelve-tone equal temperament. (b) Approximation of the harmonics produced in a pipe with one open end and one closed end, when translated to the twelve-tone equal temperament.}
\label{f:wodd}
\end{figure}

For each note in the harmonic sequence of Figure~\ref{f:wodd} (a), take its distance in semitones to the fundamental tone. We get the following set.
\begin{equation}\label{eq:H}
  H=\{0,     12,                  19,
     24,         28,      31,      34,
     36,   38,   40,   42,43\}\cup\{i\in{\mathbb N}: i\geq 45\}.
\end{equation}
These semitone intervals collapse at some point, since there is not a one-to-one correspondence from pure harmonics to semitone intervals.

\subsection*{The discrete model of numerical semigroups}
The set $H$ is the result of amplifying the set of logarithms $\{\log_2{1},\log_2{2},\log_2{3},\dots\}$ by a factor of 12 and then rounding the obtained real numbers to integer numbers.
The logarithms, as is vastly known, and as will be expained later, appear when one thinks of the values of notes as their relative interval with respect to the fundamental and when one requires the intervals between notes to behave like a proper distance. This is related to what we call the {\em product-compatibility} property.
Notice that $12\log_2(13)=44.4$ approximately and so, depending on the rounding criterion, the $13$th harmonic can be considered to be at $44$ semitones or at $45$ semitones of the fundamental.
We chose to consider the 13th harmonic at 45 semitones
of fundamental tone, as it has been preferred by traditional authors. This will be justified later. More recently, though, composers influenced by ``spectral music'' have often represented this harmonic by the pitch 44 semitones above the fundamental. See \citet{Tymoczko} for another mathematical justification of representing the 13th harmonic as 45 semitones above the fundamental.

The procedure of amplifying a sequence of real numbers and then mapping them to integer numbers is what we call a {\em discretization} of the initial sequence of real numbers. This way, $H$ is a discretization of the sequence of logarithms with amplifying factor $12$ and rounding threshold $0.4$.

The sequence of logarithm-related real numbers is an ideal physical model while its discretization is a feasible realization of it. Natural properties of the discretization, in the context of harmonics, are:  (i) it is a subset of non-negative integers containing $0$, (ii) only finitely many non-negative integers are missing, and (iii) it is closed under addition.
The addition closure amounts to the fact that the harmonics of a harmonic of a fundamental tone should be harmonics of that fundamental tone. These three properties are exactly the properties defining a {\em numerical semigroup} (see Section~\ref{s:ns}). Hence, we call the set $H$ the {\em well-tempered harmonic semigroup}.

Discretizing the sequence of physical harmonics into numerical semigroups may be considered tantamount to setting equal temperaments. The number of equal parts of the octave in an equal temperament is the amplifying factor in the discretization and it corresponds to the {\em multiplicity} of the related numerical semigroup, i.e. its smallest non-zero element. The present paper treats special ideal sequences of real numbers and their discretizations into numerical semigroups, using a variety of multiplicities to discretize, so obtaining different numerical semigroups.

\subsection*{The ${\mathbb R}$ model of tempered monoids}

The ideal sets of real numbers that we want to discretize must be 
an increasing sequence of non-negative real numbers.
Other natural properties of these sequences, paralleling what has been
said in the previous paragraphs, are that (i) they must contain $0$,
(ii) the limit of the differences of consecutive elements is zero, 
and (iii) the sequences must be closed under addition.
We say that an increasing sequence satisfying (i), (ii), and (iii) is a {\em tempered monoid} (see Section~\ref{s:ns}).

Let us concentrate on two additional properties that tempered monoids can have, which have a translation into the harmonic series. The first one is the {\em product-compatibility}, arising when one tries to fit the multiplicative nature of harmonics and frequency ratios in the additive environment of pitch and interval distances.
Indeed, suppose that the pitch difference of each harmonic with respect to the fundamental tone is represented by the increasing sequence $\rho_1=0,\rho_2,\rho_3,\dots$.
Now, the difference between the pitch of the third harmonic and the pitch of the fundamental tone must be the same as the difference between the pitch of the third harmonic of the fifth harmonic and the pitch of the fifth harmonic itself (see Figure~\ref{f:wpyth}).
This means that $\rho_{15}-\rho_5=\rho_3$. In general, for any positive integers $i$, $j$, it is required that $\rho_{ij}-\rho_{i}=\rho_{j}$ or, equivalently, $\rho_{ij}=\rho_{i}+\rho_{j}$. The tempered monoids satisfying this equality for any positive integers $i,j$ will be called product-compatible tempered monoids.

\begin{figure}[h]
  \begin{center}
\resizebox{.65\textwidth}{!}{\ifwithlilypond{
    \lilypondfile[staffsize=16]{whiteharmonicspyth.ly}
}{\includegraphics{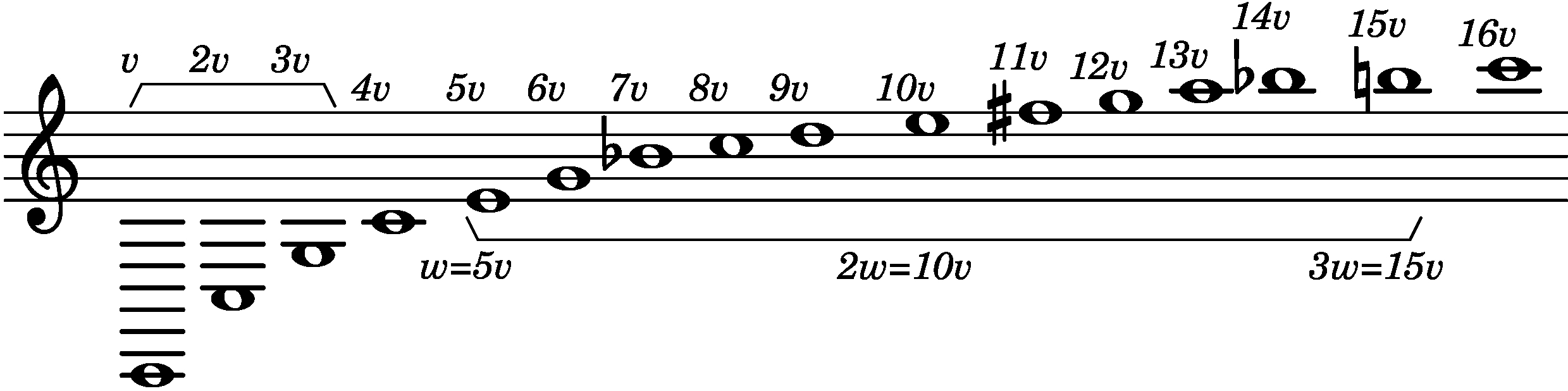}}}
  \caption{The difference between the pitch of the third harmonic and the pitch of the fundamental, as marked above, must be the same as the difference between the pitch of the third harmonic of the fifth harmonic and the pitch of the fifth harmonic itself, as marked below.
  }
\label{f:wpyth}
\end{center}
\end{figure}

The second additional property of interest of a tempered monoid is that of being {\em fractal}.
We can divide a segment in a fractal way as follows. First
we halve it. We call this a bisection of the interval since the two parts in which we divide the interval are equal. Then we halve each half and so on, indefinitely (see Figure~\ref{f:fractaldivisionofaninterval} (a)).
The same idea can be applied by dividing the interval into two parts in
a given proportion, not necessarily into two equal parts, and so, not necessarily a bisection. Next, divide each
of the parts following the same proportions as in the first cut. Divide
again each of the parts in the same proportions and so on. We obtain an
apparently chaotic but strictly fractal partition (see 
Figure~\ref{f:fractaldivisionofaninterval} (b)).

\noindent
\begin{figure}
\noindent
\begin{tabular}{ccc}
\begin{minipage}{.375\textwidth}
  \centering
\ifwithps{
  \psset{unit=5.25cm}
\begin{tabular}{c}
\begin{pspicture}(0,0)(1,.125)
\psaxes[labels=none,ticks=none,linewidth=.005](0,0.1)(1,0.1)
\psaxes[labels=none,ticks=none,linewidth=.005](0,0.075)(0,0.125)
\put(0,.05){\makebox(0,0){$0$}}
\psaxes[labels=none,ticks=none,linewidth=.005](1,0.075)(1,0.125)
\put(1,.05){\makebox(0,0){$1$}}
\end{pspicture}
\\
\begin{pspicture}(0,0)(1,.125)
\psaxes[labels=none,ticks=none,linewidth=.005](0,0.1)(1,0.1)
\psaxes[labels=none,ticks=none,linewidth=.005](0,0.075)(0,0.125)
\put(0,.05){\makebox(0,0){$0$}}
\psaxes[labels=none,ticks=none,linewidth=.005](1,0.075)(1,0.125)
\put(1,.05){\makebox(0,0){$1$}}
\psaxes[labels=none,ticks=none,linewidth=.005](.5,0.075)(.5,0.125)
\put(.5,.05){\makebox(0,0){$.5$}}
\end{pspicture}
\\
\begin{pspicture}(0,0)(1,.125)
\psaxes[labels=none,ticks=none,linewidth=.005](0,0.1)(1,0.1)
\psaxes[labels=none,ticks=none,linewidth=.005](0,0.075)(0,0.125)
\put(0,.05){\makebox(0,0){$0$}}
\psaxes[labels=none,ticks=none,linewidth=.005](1,0.075)(1,0.125)
\put(1,.05){\makebox(0,0){$1$}}
\psaxes[labels=none,ticks=none,linewidth=.005](.5,0.075)(.5,0.125)
\put(.5,.05){\makebox(0,0){$.5$}}
\psaxes[labels=none,ticks=none,linewidth=.005](.25,0.075)(.25,0.125)
\put(.25,.05){\makebox(0,0){$.25$}}
\psaxes[labels=none,ticks=none,linewidth=.005](.75,0.075)(.75,0.125)
\put(.75,.05){\makebox(0,0){$.75$}}
\end{pspicture}
\\
\begin{pspicture}(0,0)(1,.125)
\psaxes[labels=none,ticks=none,linewidth=.005](0,0.1)(1,0.1)
\psaxes[labels=none,ticks=none,linewidth=.005](0,0.075)(0,0.125)
\put(0,.05){\makebox(0,0){$0$}}
\psaxes[labels=none,ticks=none,linewidth=.005](1,0.075)(1,0.125)
\put(1,.05){\makebox(0,0){$1$}}
\psaxes[labels=none,ticks=none,linewidth=.005](.5,0.075)(.5,0.125)
\put(.5,.05){\makebox(0,0){$.5$}}
\psaxes[labels=none,ticks=none,linewidth=.005](.25,0.075)(.25,0.125)
\put(.25,.05){\makebox(0,0){$.25$}}
\psaxes[labels=none,ticks=none,linewidth=.005](.75,0.075)(.75,0.125)
\put(.75,.05){\makebox(0,0){$.75$}}
\psaxes[labels=none,ticks=none,linewidth=.005](.125,0.075)(.125,0.125)
\put(.125,.05){\makebox(0,0){$.125$}}
\psaxes[labels=none,ticks=none,linewidth=.005](.375,0.075)(.375,0.125)
\put(.375,.05){\makebox(0,0){$.375$}}
\psaxes[labels=none,ticks=none,linewidth=.005](.625,0.075)(.625,0.125)
\put(.625,.05){\makebox(0,0){$.625$}}
\psaxes[labels=none,ticks=none,linewidth=.005](.875,0.075)(.875,0.125)
\put(.875,.05){\makebox(0,0){$.875$}}
\end{pspicture}
\\
\begin{pspicture}(0,0)(1,.125)
\psaxes[labels=none,ticks=none,linewidth=.005](0,0.1)(1,0.1)
\psaxes[labels=none,ticks=none,linewidth=.005](0,0.075)(0,0.125)
\put(0,.05){\makebox(0,0){$0$}}
\psaxes[labels=none,ticks=none,linewidth=.005](1,0.075)(1,0.125)
\put(1,.05){\makebox(0,0){$1$}}
\psaxes[labels=none,ticks=none,linewidth=.005](.5,0.075)(.5,0.125)
\psaxes[labels=none,ticks=none,linewidth=.005](.25,0.075)(.25,0.125)
\psaxes[labels=none,ticks=none,linewidth=.005](.75,0.075)(.75,0.125)
\psaxes[labels=none,ticks=none,linewidth=.005](.125,0.075)(.125,0.125)
\psaxes[labels=none,ticks=none,linewidth=.005](.375,0.075)(.375,0.125)
\psaxes[labels=none,ticks=none,linewidth=.005](.625,0.075)(.625,0.125)
\psaxes[labels=none,ticks=none,linewidth=.005](.875,0.075)(.875,0.125)
\psaxes[labels=none,ticks=none,linewidth=.005](.0625,0.075)(.0625,0.125)
\psaxes[labels=none,ticks=none,linewidth=.005](.1875,0.075)(.1875,0.125)
\psaxes[labels=none,ticks=none,linewidth=.005](.3125,0.075)(.3125,0.125)
\psaxes[labels=none,ticks=none,linewidth=.005](.4375,0.075)(.4375,0.125)
\psaxes[labels=none,ticks=none,linewidth=.005](.5625,0.075)(.5625,0.125)
\psaxes[labels=none,ticks=none,linewidth=.005](.6875,0.075)(.6875,0.125)
\psaxes[labels=none,ticks=none,linewidth=.005](.8125,0.075)(.8125,0.125)
\psaxes[labels=none,ticks=none,linewidth=.005](.9375,0.075)(.9375,0.125)
\end{pspicture}
\end{tabular}}{\resizebox{.9\textwidth}{!}{\includegraphics{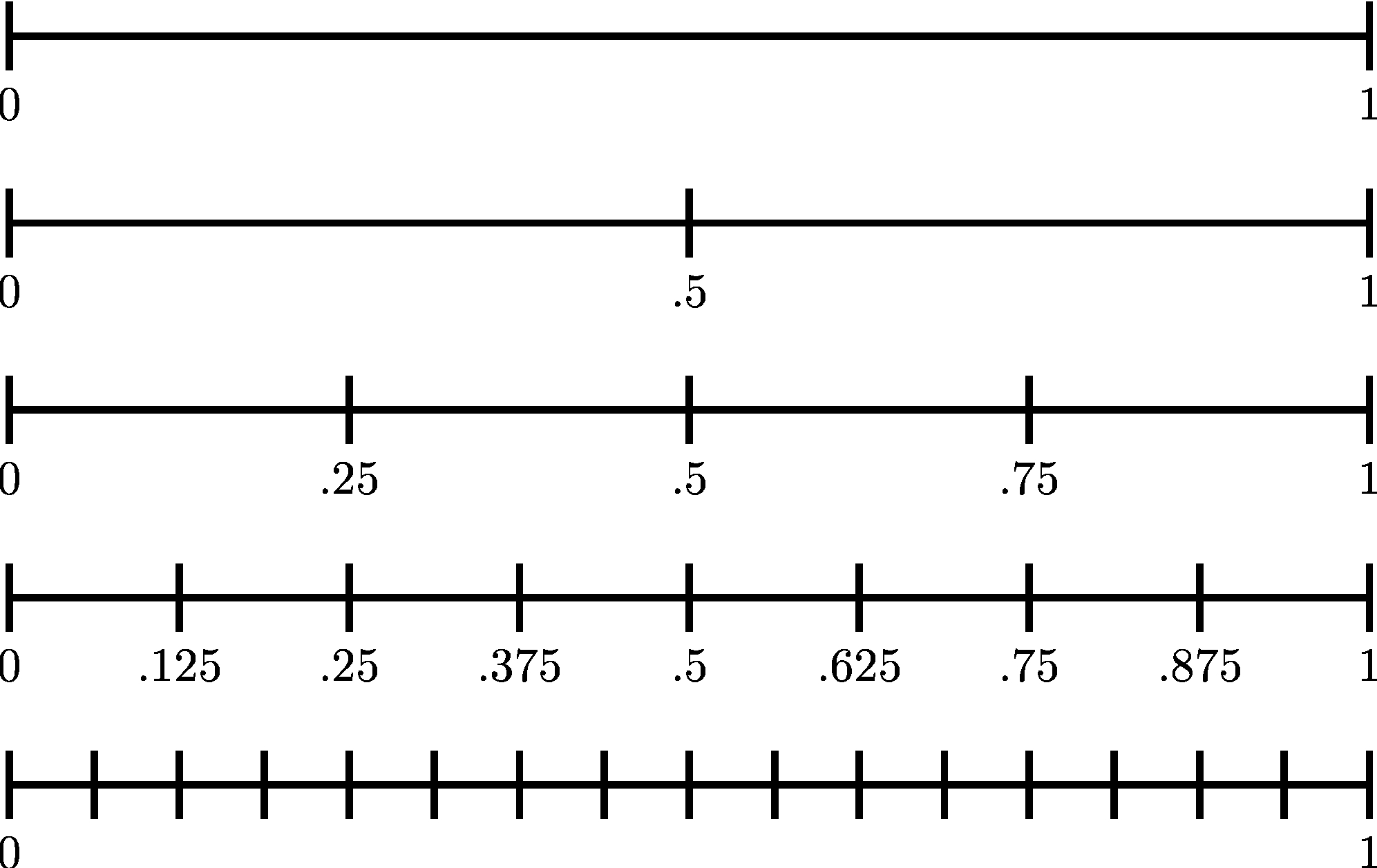}}}
\end{minipage}
&\ \ \ \ \ &
\begin{minipage}{.375\textwidth}
  \centering
\ifwithps{
\psset{unit=5.25cm}
\begin{tabular}{c}
\begin{pspicture}(0,0)(1,.125)
\psaxes[labels=none,ticks=none,linewidth=.005](0,0.1)(1,0.1)
\psaxes[labels=none,ticks=none,linewidth=.005](0,0.075)(0,0.125)
\put(0,.05){\makebox(0,0){$0$}}
\psaxes[labels=none,ticks=none,linewidth=.005](1,0.075)(1,0.125)
\put(1,.05){\makebox(0,0){$1$}}
\end{pspicture}
\\
\begin{pspicture}(0,0)(1,.125)
\psaxes[labels=none,ticks=none,linewidth=.005](0,0.1)(1,0.1)
\psaxes[labels=none,ticks=none,linewidth=.005](0,0.075)(0,0.125)
\put(0,.05){\makebox(0,0){$0$}}
\psaxes[labels=none,ticks=none,linewidth=.005](1,0.075)(1,0.125)
\put(1,.05){\makebox(0,0){$1$}}
\psaxes[labels=none,ticks=none,linewidth=.005](.618,0.075)(.618,0.125)
\put(.618,.05){\makebox(0,0){$p$}}
\end{pspicture}
\\
\begin{pspicture}(0,0)(1,.125)
\psaxes[labels=none,ticks=none,linewidth=.005](0,0.1)(1,0.1)
\psaxes[labels=none,ticks=none,linewidth=.005](0,0.075)(0,0.125)
\put(0,.05){\makebox(0,0){$0$}}
\psaxes[labels=none,ticks=none,linewidth=.005](1,0.075)(1,0.125)
\put(1,.05){\makebox(0,0){$1$}}
\psaxes[labels=none,ticks=none,linewidth=.005](.618,0.075)(.618,0.125)
\put(.618,.05){\makebox(0,0){$p$}}
\psaxes[labels=none,ticks=none,linewidth=.005](.382,0.075)(.382,0.125)
\put(.382,.05){\makebox(0,0){$p^2$}}
\psaxes[labels=none,ticks=none,linewidth=.005](.8541,0.075)(.8541,0.125)
\put(.8541,.05){\makebox(0,0){\scalebox{.8}{$p+(1-p)p$}}}
\end{pspicture}
\\
\begin{pspicture}(0,0)(1,.125)
\psaxes[labels=none,ticks=none,linewidth=.005](0,0.1)(1,0.1)
\psaxes[labels=none,ticks=none,linewidth=.005](0,0.075)(0,0.125)
\put(0,.05){\makebox(0,0){$0$}}
\psaxes[labels=none,ticks=none,linewidth=.005](1,0.075)(1,0.125)
\put(1,.05){\makebox(0,0){$1$}}
\psaxes[labels=none,ticks=none,linewidth=.005](.618,0.075)(.618,0.125)
\psaxes[labels=none,ticks=none,linewidth=.005](.382,0.075)(.382,0.125)
\psaxes[labels=none,ticks=none,linewidth=.005](.8541,0.075)(.8541,0.125)
\psaxes[labels=none,ticks=none,linewidth=.005](.2361,0.075)(.2361,0.125)
\psaxes[labels=none,ticks=none,linewidth=.005](.5279,0.075)(.5279,0.125)
\psaxes[labels=none,ticks=none,linewidth=.005](.7639,0.075)(.7639,0.125)
\psaxes[labels=none,ticks=none,linewidth=.005](.9443,0.075)(.9443,0.125)
\end{pspicture}
\\
\begin{pspicture}(0,0)(1,.125)
\psaxes[labels=none,ticks=none,linewidth=.005](0,0.1)(1,0.1)
\psaxes[labels=none,ticks=none,linewidth=.005](0,0.075)(0,0.125)
\put(0,.05){\makebox(0,0){$0$}}
\psaxes[labels=none,ticks=none,linewidth=.005](1,0.075)(1,0.125)
\put(1,.05){\makebox(0,0){$1$}}
\psaxes[labels=none,ticks=none,linewidth=.005](.618,0.075)(.618,0.125)
\psaxes[labels=none,ticks=none,linewidth=.005](.382,0.075)(.382,0.125)
\psaxes[labels=none,ticks=none,linewidth=.005](.8541,0.075)(.8541,0.125)
\psaxes[labels=none,ticks=none,linewidth=.005](.2361,0.075)(.2361,0.125)
\psaxes[labels=none,ticks=none,linewidth=.005](.5279,0.075)(.5279,0.125)
\psaxes[labels=none,ticks=none,linewidth=.005](.7639,0.075)(.7639,0.125)
\psaxes[labels=none,ticks=none,linewidth=.005](.9443,0.075)(.9443,0.125)
\psaxes[labels=none,ticks=none,linewidth=.005](.1459 ,0.075)(.1459,0.125)
\psaxes[labels=none,ticks=none,linewidth=.005](.3262 ,0.075)(.3262,0.125)
\psaxes[labels=none,ticks=none,linewidth=.005](.4721 ,0.075)(.4721,0.125)
\psaxes[labels=none,ticks=none,linewidth=.005](.5836,0.075)(.5836,0.125)
\psaxes[labels=none,ticks=none,linewidth=.005](.7082,0.075)(.7082,0.125)
\psaxes[labels=none,ticks=none,linewidth=.005](.8197,0.075)(.8197,0.125)
\psaxes[labels=none,ticks=none,linewidth=.005](.9098,0.075)(.9098,0.125)
\psaxes[labels=none,ticks=none,linewidth=.005](.9787,0.075)(.9787,0.125)
\end{pspicture}
\end{tabular}}{\resizebox{.9\textwidth}{!}{\includegraphics{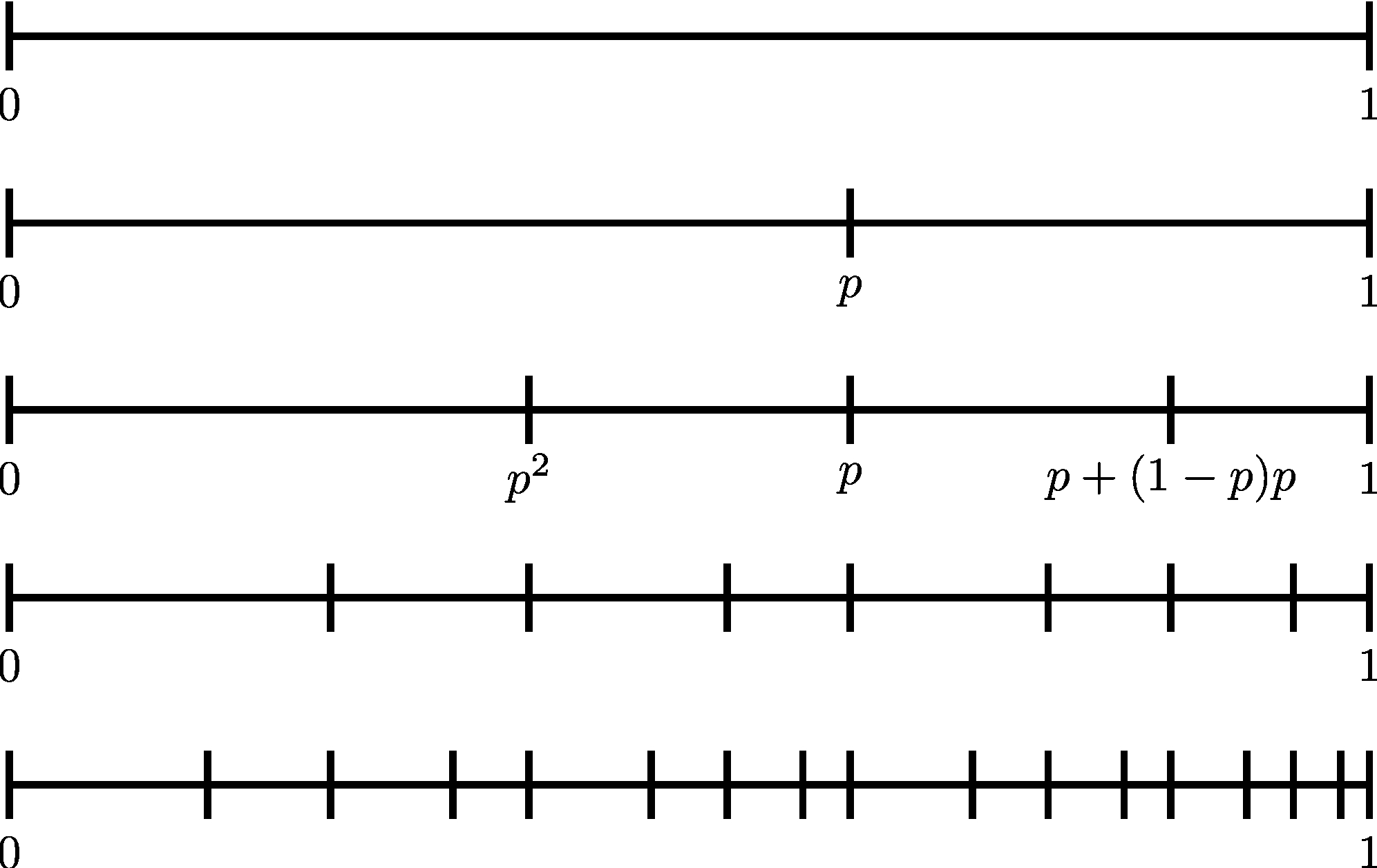}}}
\end{minipage}
\\
(a)&&(b)
\end{tabular}
\caption{(a) Fractal division of an interval (bisectional). (b) Fractal division of an interval (nonbisectional).}
\label{f:fractaldivisionofaninterval}
\end{figure}

Similarly, take the octave as the segment to be divided and restrict to the twelve-tone equal temperament approximation of harmonics. We can observe that in each next octave in the harmonic series, the intervals appearing in the previous octave are divided always into two parts with the same portions (see Figure~\ref{f:l}).
Observe that the octave between the second and fourth harmonics is divided by the third harmonic leaving $7/12$ of the octave on the left and $5/12$ on the right. The two intervals obtained now, C-G, G-C, are then repeated within the octave comprised between the fourth and eighth harmonics and subsequently divided leaving the best possible approximation (restricted to the twelve-tone equal temperament) of $7/12$ of the interval on the left and the best possible approximation of $5/12$ of the interval on the right, thus obtaining C-E, E-G and G-B$\flat$, B$\flat$-C. Each black note in Figure~\ref{f:l} is a new note that did not appear in any previous octave, and which divides an interval of two notes appearing in the previous octave in the same portions of approximately $7/12$ of the interval on the left and $5/12$ of the interval on the right.
The division of the octave into $12$ equal parts is used here in the introduction for simplicity. In the following sections all possible divisions of the octave are considered {\em a priori}.

\begin{figure}[h]
\begin{center}
\resizebox{.6\textwidth}{!}{
\ifwithlilypond{
  \lilypondfile[staffsize=16]{fractaldivisionharmonics.ly}
  }{\includegraphics{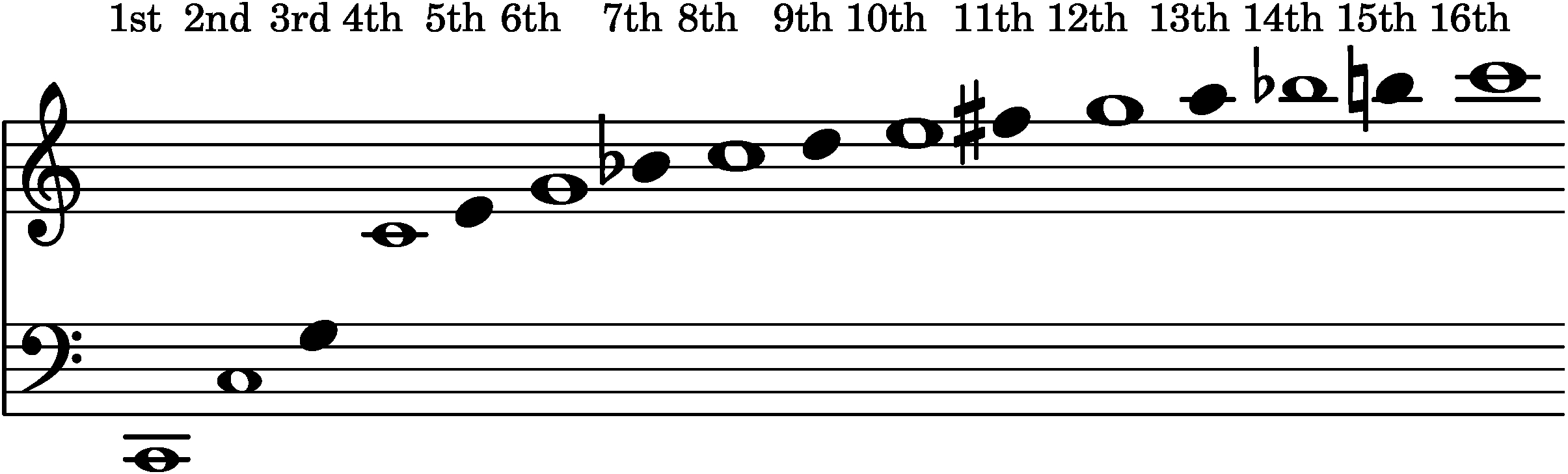}}}
  \caption{
    Each black note in the figure is a new note that did not appear in any previous octave, and which divides an interval of two notes appearing in the previous octave in the same portions of approximately $7/12$ of the interval on the left and $5/12$ of the interval on the right.}
\label{f:l}
\end{center}
\end{figure}

Lastly, the frequencies of the harmonics of cylindrical pipes with one open end and one closed end suggest to analyze the so-called {\em odd-filterable} tempered monoids and {\em odd-filterable} numerical semigroups. For tempered monoids, {\em odd-filterable} means that the subsequence of elements with odd index is closed under addition. The discretization of a tempered monoid into a numerical semigroup collapses at some point, in the sense that two different real elements of the tempered monoid map to the same integer element of the numerical semigroup. Now, we say that a numerical semigroup is odd-filterable if the subsequence of elements with odd index is closed under addition before collapse. That is, the sums of pairs in it that are smaller than the collapse are either in the subsequence or are larger than the collapse.

\subsection*{Our results}
In this paper we prove that there is only one product-compatible tempered monoid up to normalization, and that there is only one nonbisectional fractal monoid, up to normalization. Furthermore we will prove that the unique nonbisectional division for fractal monoids is nothing else but the inverse of the golden ratio. The quotient $7/12$ used in the previous paragraph is an approximation of it. Notice that although there are much better rational approximations to the golden ratio, $7/12$ is its closest fraction when restricted to denominator $12$.

Then we will show that the discretization of the unique product-compatible tempered monoid and the discretization of the unique nonbisectional fractal monoid coincide only when we divide the octave into $1$, $2$, $3$, $4$, $5$, $6$, $7$, $8$, $9$, $10$, $12$, $13$, or $18$ parts.
We finally will prove that when dividing the octave into $13$ or $18$ equal parts, the related numerical semigroups are not odd-filterable, but when dividing it into $12$ equal parts, it keeps the odd-filterability property.

This allows us to conclude that the maximum number of equal divisions of the octave such that the discretization of the unique product-compatible tempered monoid is simultaneously fractal and odd-filterable is $12$. The resulting numerical semigroup is the well-tempered harmonic semigroup $H$ in \eqref{eq:H}. This gives an alternative justification for the choice of number $12$ in the division of the octave into equal parts.

For a mathematical explanation of different stationary waves produced in different instruments and temperaments see \citet{Hall}, \citet{Harkleroad}, and \citet{Sethares}.
Other references treating the number of equal divisions of the octave are \citet{Barbour}, \citet{Douthett}, and \citet{Krantz}. For a general reference on numerical semigroups see \citet{Pedro}.

\section{Numerical semigroups and tempered monoids}
\label{s:ns}

In this section we introduce the two main mathematical objects of this paper. They are {\em numerical semigroups} and the newly defined {\em tempered monoids}.
Both are examples of increasingly enumerable submonoids of $({\mathbb R}^+,+)$, called $\omega$-monoids in \cite{omegamonoids}. Different results related to submonoids of $({\mathbb R}^+,+)$ can be found in \cite{Clifford,Grams,Anderson,Gotti,GottiGotti}.

\begin{definition}
A {\em numerical semigroup} is 
an additive submonoid of ${\mathbb N}_0$ with finite complement in ${\mathbb N}_0$.
The second element of $S$, i.e., the smallest non-zero element of $S$, is its {\em multiplicity}, denoted $m(S)$.
\end{definition}
The set $H$ in equation \eqref{eq:H} is a numerical semigroup of multiplicity $12$.


In this paper we introduce the notion of {\em tempered monoid}.
\begin{definition}
A {\em tempered monoid} 
is a strictly increasing sequence
$M=\{\mu_1,\mu_2,\mu_3,\dots\}$ of non-negative
real numbers such that (i) $\mu_1=0$, (ii) $\lim_{n\rightarrow\infty}(\mu_{n+1}-\mu_{n})=0$, (iii) 
$M$ is closed under addition, that is, for any $i,j\in{\mathbb N}$, there exists $k\in{\mathbb N}$ such that $\mu_k=\mu_i+\mu_j$.

A tempered monoid is called {\em normalized} if its smallest non-zero element is $1$.
If a tempered monoid $M$ is normalized, for $i>0$ we define its {\em $i$-th period}, denoted $\pi_i(M)$, as the set of elements in $M$ that are at least as large as $i$ and that are strictly smaller than $i+1$.
The {\em granularity} of $M$ is the cardinality of its first period.
\end{definition}

\begin{example}
  \label{e:quarterstempmonoid}
    The following set is a tempered monoid.
$$Q=\{0\}\cup\{n+ \frac{k}{2^{n+1}} :n\in {\mathbb N} \mbox{ and } 0\leq k\leq 2^{n+1}-1\}.$$
Its granularity is $4$. Its first period is $$\{1, 1+\frac{1}{4},1+\frac{1}{2},1+\frac{3}{4}\}.$$
\end{example}

\begin{example}
\label{e:deustempmonoid}
  The following set is a tempered monoid.
$$D=\{0\}\cup\{n+ \frac{k}{10^{n}} :n\in {\mathbb N} \mbox{ and } 0\leq k\leq 10^{n}-1\}$$
\mut{  \begin{eqnarray*}
  D&=&0,\\
  &&1,1.1,1.2,1.3,1.4,1.5,1.6,1.7,1.8,1.9,\\
  &&2,2.01,2.02,2.03,2.04,\dots
  2.94,2.95,2.96,2.97,2.98,2.99,\\
  &&3,3.001,3.002,3.003,3.004,\dots,3.994,3.995,3.996,3.997,3.998,3.999,\\
  &&4,4.0001,4.0002,4.0003,4.0004,\dots,4.9994,4.9995,4.9996,4.9997,4.9998,4.9999,\\
  &&5,5.00001,5.00002,5.00003,5.00004,\dots
\end{eqnarray*}}
  Its granularity is $10$ and its $i$-th period has cardinality $10^i$.
\end{example}

\section{Discretization of tempered monoids to obtain numerical semigroups}

In this section we explain how we can obtain numerical semigroups by discretizing the elements of tempered monoids.

For a real number $r$ let  $[r]$ be its {\em integer rounding} by the nearest integer, let $\lfloor r \rfloor$ be its {\em integer floor}, and let $\lceil r \rceil$ be its {\em integer ceiling}.
More generally, given $r\in{\mathbb R}$ and $\alpha\in{\mathbb R}$ with $0\leq \alpha\leq 1$, let $$\lfloor r\rfloor_\alpha =\left\{\begin{array}{ll}
\lfloor r\rfloor&\mbox{ if }r-\lfloor r\rfloor < \alpha,\\
\lceil r\rceil&otherwise
\end{array}\right.$$
In particular, $\lfloor\cdot\rfloor=\lfloor\cdot\rfloor_1$,
$\lceil\cdot\rceil=\lfloor\cdot\rfloor_0$,
$[\cdot]=\lfloor\cdot\rfloor_{0.5}$.

For a tempered monoid $M$ and a positive integer $m$ we can similarly apply 
the operations to the tempered monoid elementwise:
$$[mM]:=\{[mr]:r\in M\},\mbox{ \ \ }\lfloor mM\rfloor:=\{\lfloor mr\rfloor:r\in M\},\mbox{ \ \ }\lceil mM\rceil:=\{\lceil mr\rceil:r\in M\}.$$
More generally, $\lfloor mM\rfloor_\alpha:=\{\lfloor mr\rfloor_\alpha:r\in M\}.$

The first and second condition, respectively, in the definition of a tempered monoid imply that the set $\lfloor mM \rfloor_\alpha$ satisfies the first and second condition, respectively, in the definition of a numerical semigroup, for any $\alpha$.
However, the third condition in the definition of a tempered monoid $M$ does not guarantee in general the third condition for the set $\lfloor mM\rfloor_\alpha$ to be a numerical semigroup for a general positive integer $m$ and a real number in the unit interval $\alpha$.

\begin{example}
  Let $Q$ be the set in Example~\ref{e:quarterstempmonoid} and let $m=19$. Then,

\begin{eqnarray*}
  19 Q & = & \{ 0,\; 19,\;
 23.75,\; 28.5,\; 33.25,\; 38,\; 40.375,\; 42.75,\; 45.125,\; 47.5,\; 49.875,\; 52.25,\; \\&& 54.625,\; 57,\; 58.1875,\; 59.375,\; 60.5625,\; 61.75,\; 62.9375,\; 64.125,\; 65.3125,\; \\&& 66.5,\; 67.6875,\; 68.875,\; 70.0625,\; 71.25,\; 72.4375,\; 73.625,\; 74.8125,\; 76,\; \\&& 76.59375,\; 77.1875,\; 77.78125,\; 78.375,\; 78.96875,\; 79.5625,\; 80.15625,\;\; \dots\}\\
\big[ 19Q \big]&=&\{0,\; 19,\; 24,\; 29,\; 33,\; 38,\; 40,\; 43,\; 45,\; 48,\; 50,\; 52,\; 55,\; 57,\; 58,\; 59,\; 61,\; 62,\; 63,\; \\&&64,\; 65,\; 67,\; 68,\; 69,\; 70,\; 71,\; 72\}\cup\{i\in{\mathbb N}:i\geq 74\}\\
\lfloor19Q\rfloor&=&\{
0,\; 19,\; 23,\; 28,\; 33,\; 38,\; 40,\; 42,\; 45,\; 47,\; 49,\; 52,\; 54,\; 57,\; 58,\; 59,\; 60,\; 61,\; 62,\; \\&&64,\; 65,\; 66,\; 67,\; 68,\; 70,\; 71,\; 72,\; 73,\; 74\}\cup\{i\in{\mathbb N}:i\geq 76\}
\end{eqnarray*}

Notice that  $33,40\in[19Q]$, but $33+40=73\not\in[19Q]$, while  $28\in\lfloor 19Q\rfloor$, but $28+28=56\not\in\lfloor 19Q\rfloor$. Hence, neither $[19Q]$ nor $\lfloor 19Q\rfloor$ are numerical semigroups.
\end{example}

We say that a normalized tempered monoid $M$ is {\em discretizable by a multiplicity $m$ under rounding}, {\em under flooring}, {\em under ceiling} or, more generally, {\em under rounding by $\alpha$}, if $[mM]$, $\lfloor mM\rfloor$, $\lceil mM\rceil$,
or $\lfloor mM\rfloor_\alpha$, respectively, is a numerical semigroup. Notice that if the resulting integer set is a numerical semigroup, then $m$ is the multiplicity of the resulting numerical semigroup, in each case.
It is easy to check that every normalized tempered monoid is discretizable at least by $m=1$.
When $m=1$ the semigroup obtained is the semigroup of the naturals with the zero.

\begin{example}
  Let $Q$ be the set in Example~\ref{e:quarterstempmonoid} and let $m=16$. Then,
  \begin{eqnarray*}
    16Q&=&\{0, 16, 20, 24, 28, 32, 34, 36, 38, 40, 42, 44, 46, 48, 49, 50, 51, 52, 53, 54, 55,\\&&
56, 57, 58, 59, 60, 61, 62, 63, 64, 64.5, 65, 65.5, 66, 66.5, 67, 67.5, 68, 68.5, 69,\dots\}\\
  \big[16Q\big]&=&
\{0,16,20,24,28,32,34,36,38,40,42,44,46\}\cup\{i\in{\mathbb N}: i\geq 48\}\\
  \lfloor 16Q\rfloor&=&[16Q]
  \end{eqnarray*}

Since both $[16Q]$ and $\lfloor 16Q\rfloor$ are numerical semigroups,  $Q$ is discretizable by $16$ either by rounding or by flooring.
\end{example}

\section{Product-compatible tempered monoids}
In this section we introduce the property of {\em product-compatibility} and prove that there is only one normalized product-compatible tempered monoid. We call it the {\em logarithmic monoid}.
\begin{definition}
We say the tempered monoid $M=\{\mu_1, \mu_2, \mu_3, \dots\}$ with $\mu_i<\mu_{i+1}$ is {\em product-compatible} if $\mu_{ij}=\mu_{i}+\mu_{j}$ for any $i,j\in{\mathbb N}$.
For a justification of this definition, see the introduction and Figure~\ref{f:wpyth}.
\end{definition}

\begin{example}
  \label{e:logtempmonoid}
  The {\em logarithmic monoid} is the set $L=\{\log_2(i):i\in{\mathbb N}\}$.
  Denote $\lambda_1=\log_2(1)$, $\lambda_2=\log_2(2), \dots$
  See next the (rounded) smallest elements in $L$:

\begin{eqnarray*}
  L&=&\{0,\; 1,\; 1.5849,\; 2,\; 2.3219,\; 2.5849,\; 2.8073,\; 3,\; 3.1699,\; 3.3219,\; 3.4594,\; 3.5849,\; \\&& 3.7004,\; 3.8073,\; 3.9068,\; 4,\; 4.0874,\; 4.1699,\; 4.2479,\; 4.3219,\; 4.3923,\; 4.4594,\; \\&&4.5235,\; 4.5849,\; 4.6438,\; 4.7004,\; 4.7548,\; 4.8073,\; 4.8579,\; 4.9068,\; 4.9541,\; 5,\; \\&&5.0443,\; 5.0874,\; 5.1292,\; 5.1699,\; 5.2094,\; 5.2479,\; 5.2854,\; 5.3219,\; 5.3575,\;\dots\ \}
  \end{eqnarray*}

It is left to the reader to check that $L$ is a product-compatible tempered monoid.
It follows from the well-known property of logarithms stating that the logarithm of a product is the sum of logarithms. 
\end{example}

\begin{theorem}
The logarithmic monoid $L$ is the unique product-compatible normalized tempered monoid.
\end{theorem}

\begin{proof}
Suppose that $M=\{\mu_1, \mu_2, \mu_3, \dots\}$ with $\mu_i<\mu_{i+1}$
is a normalized product-compatible tempered monoid. One can check by
induction that
$\mu_{j^k}=k\mu_{j}$ for any $k\in{\mathbb N}$, $j\in{\mathbb N}$. 
Now, it is claimed that $\mu_i=\log_2(i)$  for any $i\in{\mathbb N}$.
Indeed, suppose that this does not hold for some $i\in{\mathbb N}$.
Then $\mu_i<\log_2(i)$ or $\mu_i>\log_2(i)$. Suppose first that
$\mu_i<\log_2(i)$. There exist $p,q\in{\mathbb N}$ such that
$\mu_i<p/q<\log_2(i)$. In particular, $q\mu_i<p$ while $q\log_2(i)>p$.
This, together with equality $\mu_{j^k}=k\mu_{j}$ 
implies that $\mu_{i^q}=q\mu_i<p=p\mu_2=\mu_{2^p}$, while $i^q=2^{q\log_2(i)}>2^p$, a contradiction since the sequence $\mu_1,\mu_2,\mu_3,\dots$ is supposed to be increasing.
An analogous contradiction is found for the case $\mu_i>\log_2(i)$.
\end{proof}

\section{Fractal monoids}

In this section we define {\em fractal monoids} and we prove that there is only one nonbisectional fractal monoid of granularity $2$. It is generated by the golden ratio and it is hence called the {\em golden fractal monoid}.

\begin{definition}
Let $M=\{\mu_1, \mu_2, \mu_3, \dots\}$ with $\mu_i<\mu_{i+1}$ be a normalized tempered monoid with granularity $\ell$. For any $i\in{\mathbb N}$, let $\ell_i$ be the cardinality of the $i$th period of $M$ and suppose that $$\pi_i(M)=\{i+\tau^{(i)}_0,i+\tau^{(i)}_1,\dots,i+\tau^{(i)}_{\ell_i-1}\},$$ with $\tau^{(i)}_0=0<\tau^{(i)}_1<\tau^{(i)}_2<\dots<\tau^{(i)}_{\ell_i-1}<\tau^{(i)}_{\ell_i}=1$.
We say that $M$ is {\em fractal} if for any $i\in{\mathbb N}$,
\begin{equation}
  \label{eq:genfractal}
  \pi_{i+1}(M)=\bigcup_{r=0}^{r<\ell_{i}}\bigcup_{s=0}^{s<\ell}\{(i+1)+\tau^{(i)}_r+\tau^{(1)}_s(\tau^{(i)}_{r+1}-\tau^{(i)}_r)\}.
\end{equation}
\end{definition}

Roughly speaking, we say that a tempered monoid is a {\em fractal monoid} if for any $i\in{\mathbb N}$
and for any interval between consecutive elements in the $i$th period,
the same interval appears in the next period (just adding $1$ to each end) divided exactly in the same portions as the interval from $1$ to $2$ is divided in the first period.

It is obvious that for each first period there exists exactly one such construction. So, for a fixed first period, there is at most one fractal monoid. Whether it exists or not will depend on whether the construction in \eqref{eq:genfractal} gives a set closed under addition or not. If this is the case, we say that the first period {\em generates} the tempered monoid as a fractal monoid.


\mut{
\begin{example}\label{e:fractmonoidfirst}
  The period $\{1,1.5,1.75\}$ generates a fractal monoid.
  Its first period is, indeed $\{1,1.5,1.75\}$.
  In the second period, we have each of the elements in the first period plus one, i.e, $\{2,2.5,2.75\}$, and the three intervals in between, $[2,2.5]$, $[2.5,2.75]$, $[2.75,3]$ are subsequently divided using the same proportions as the first period. That is the interval
  $[2,2.5]$ is divided by $2.25$ and $2.375$, the interval
  $[2.5,2.75]$ is divided by $2.625$ and $2.6875$, and the interval
  $[2.75,3]$ is divided by     $2.875$ and $2.9375$.

  The smallest elements of the monoid are
  {\small$0.$, $1.$, $1.5$, $1.75$, $2.$, $2.25$, $2.375$, $2.5$, $2.625$, $2.6875$,
    $2.75$,
    $2.875$, $2.9375$,
    $3.$, $3.125$, $3.1875$, $3.25$, $3.3125$, $3.34375$, $3.375$, $3.4375$, $3.46875$, $3.5$, $3.5625$, $3.59375$, $3.625$, $3.65625$, $3.671875$, $3.6875$, $3.71875$, $3.734375$, $3.75$, $3.8125$, $3.84375$, $3.875$, $3.90625$, $3.921875$, $3.9375$, $3.96875$, $3.984375$, $4.$, $4.0625$, $4.09375$, $4.125$, $4.15625$, $4.171875$, $4.1875$, $4.21875$, $4.234375$, $4.25$, $4.28125$, $4.296875$, $4.3125$, $4.328125$, $4.3359375$, $4.34375$, $4.359375$, $4.3671875$, $4.375$, $4.40625$, $4.421875$, $4.4375$, $4.453125$, $4.4609375$, $4.46875$, $4.484375$, $4.4921875$, $4.5$, $4.53125$, $4.546875$, $4.5625$, $4.578125$, $4.5859375$, $4.59375$, $4.609375$, $4.6171875$, $4.625$, $4.640625$, $4.6484375$, $4.65625$, $4.6640625$, $4.66796875$, $4.671875$, $4.6796875$, $4.68359375$, $4.6875$, $4.703125$, $4.7109375$, $4.71875$, $4.7265625$, $4.73046875$, $4.734375$, $4.7421875$, $4.74609375$, $4.75$, $4.78125$, $4.796875$, $4.8125$, $4.828125$, $4.8359375$, $4.84375$, $4.859375$, $4.8671875$, $4.875$, $4.890625$, $4.8984375$, $4.90625$, $4.9140625$, $4.91796875$, $4.921875$, $4.9296875$, $4.93359375$, $4.9375$, $4.953125$, $4.9609375$, $4.96875$, $4.9765625$, $4.98046875$, $4.984375$, $4.9921875$, $4.99609375$, \dots}
\end{example}
}

\begin{example}
  The tempered monoid
$D=\{0\}\cup\{n+ \frac{k}{10^{n}} :n\in {\mathbb N} \mbox{ and } 0\leq k\leq 10^{n}-1\}$
  from Example~\ref{e:deustempmonoid} is a fractal monoid.
  It is generated by the period  $$\{1,1.1,1.2,1.3,1.4,1.5,1.6,1.7,1.8,1.9\}.$$
\end{example}

\begin{example}\label{e:nonfractmonoid}
Let us parallel the observation in Figure~\ref{f:l}, analyzing the period $\{1, 1+7/12\}$. If this period generated a fractal monoid, the monoid should be
\begin{eqnarray*}M&=& 
\{0,\ \ \ 1,\ \ \ 1+7/12,\ \ \ 
2,\ \ \ 2+(7/12)^2,\ \ \ 2+(7/12),\ \ \ 2+(7/12)+5\cdot 7/12^2,
\\&&
3,\ \ \ 3+(7/12)^3,\ \ \ 3+(7/12)^2,\ \ \ 3+(7/12)^2+7^2\cdot 5/12^3,\
                                                                     \
                                                                     \
                                                                     3+(7/12),
\\&&
3+(7/12)+5\cdot 7^2/12^3,\ \ \ 3+(7/12)+5\cdot 7/12^2,
\\&&
     3+(7/12)+5\cdot 7/12^2+5^2\cdot 7/12^3,
\\&&\dots \}\\
&=& 
\{0,\ \ 1,\ \ 19/12,\ \ 2,\ \ 337/144,\ \ 31/12,\ \ 407/144,\ \ 3,\ \ 5527/1728,\ \
\\&&
481/144,\ \ 6017/1728,\ \ 43/12,\ \ 6437/1728,\ \ 551/144,\ \ 6787/1728,\ \ 
\dots \}
\\
\end{eqnarray*}
However, this is not a tempered monoid since it is not closed under addition. Indeed, $2(1+7/12)=3+2/12\not\in M$. Thus, $\{1, 1+7/12\}$ does not generate a fractal monoid.
\end{example}

\begin{example}
  The tempered monoid
  $L=\{\lambda_i=\log_2(i):i\in {\mathbb N}\}$
  in Example~\ref{e:logtempmonoid} is not fractal.
  Indeed, since its granularity is $2$, for it to be fractal it should satisfy
  $$\frac{\lambda_3-\lambda_2}{\lambda_4-\lambda_2}=\frac{\lambda_5-\lambda_4}{\lambda_6-\lambda_4}.$$ In that case one would have
  $$\frac{\log_2(3/2)}{\log_2(4/2)}=\frac{\log_2(5/4)}{\log_2(6/4)},$$ implying that
  $(\log_2(1.5))^2=\log_2(1.25)$, which is false.  
\end{example}

\begin{lemma}
The cardinality of the $i$th period of a fractal monoid of granularity $\ell$ is $\ell^i$, for any $i\in{\mathbb N}$.
\end{lemma}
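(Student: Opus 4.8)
The plan is to induct on $i$, reading off from the defining fractal relation \eqref{eq:genfractal} that passing from period $i$ to period $i+1$ multiplies the cardinality by exactly $l$. First I would pin down the base case: since $M$ is normalized, $\mu_0=0$ and $\mu_1=1$, so the $0$-th period is $\pi_0(M)=\{0\}$ and $l_0=1=l^0$; specializing \eqref{eq:genfractal} to $i=0$ (where $\tau^{(0)}_0=0$, $\tau^{(0)}_1=1$) also recovers $\pi_1(M)=\{1+\tau^{(1)}_s : 0\le s<l\}$, i.e. $l_1=l$. So it is enough to prove $l_{i+1}=l\cdot l_i$ for every $i\in{\mathbb N}_0$.

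Fix $i$ and write $\pi_i(M)=\{i+\tau^{(i)}_0,\dots,i+\tau^{(i)}_{l_i-1}\}$ with $0=\tau^{(i)}_0<\dots<\tau^{(i)}_{l_i-1}<\tau^{(i)}_{l_i}=1$, and the first-period proportions $0=\tau^{(1)}_0<\dots<\tau^{(1)}_{l-1}<\tau^{(1)}_l=1$. Relation \eqref{eq:genfractal} displays $\pi_{i+1}(M)$ as a union of $l_i\cdot l$ singletons
$$x_{r,s}=(i+1)+\tau^{(i)}_r+\tau^{(1)}_s\bigl(\tau^{(i)}_{r+1}-\tau^{(i)}_r\bigr),\qquad 0\le r<l_i,\quad 0\le s<l,$$
which already gives $l_{i+1}\le l\cdot l_i$. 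The real work is to verify that these $l_i\cdot l$ numbers are pairwise distinct, which yields the matching lower bound and hence $l_{i+1}=l\cdot l_i=l^{i+1}$ by the inductive hypothesis $l_i=l^i$.

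For distinctness I would split into two cases. When $r=r'$ and $s\ne s'$, the map $s\mapsto x_{r,s}$ is strictly increasing because $\tau^{(1)}$ is strictly increasing and the scaling factor $\tau^{(i)}_{r+1}-\tau^{(i)}_r$ is strictly positive, so $x_{r,s}\ne x_{r,s'}$. When $r<r'$, I would use $\tau^{(1)}_s<\tau^{(1)}_l=1$ and $\tau^{(1)}_{s'}\ge\tau^{(1)}_0=0$ together with $r+1\le r'$ to get
$$x_{r,s}<(i+1)+\tau^{(i)}_{r+1}\le(i+1)+\tau^{(i)}_{r'}\le x_{r',s'},$$
so the blocks coming from distinct subintervals are disjoint. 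The one point to be careful about is exactly this last chain of inequalities: it relies on the strict bounds $\tau^{(i)}_{l_i-1}<1$ and $\tau^{(1)}_{l-1}<1$ that are built into the definition of a period — i.e. on the fact that the rightmost new point inside each subinterval never reaches its right endpoint. Everything else is routine bookkeeping, and I do not expect any genuine obstacle.
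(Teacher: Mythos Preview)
The paper states this lemma without proof, treating it as an immediate consequence of the fractal recursion \eqref{eq:genfractal}. Your argument is correct and is exactly the natural way to make that observation precise: each of the $l_i$ subintervals of the $i$-th period is subdivided into $l$ pieces in the $(i+1)$-th period, and your distinctness check (monotonicity in $s$ for fixed $r$, and the nesting of the blocks for distinct $r$ via $\tau^{(1)}_s<1$) confirms there are no collisions, giving $l_{i+1}=l\cdot l_i$ and hence $l_i=l^i$ by induction.
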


\begin{example}
\label{e:perfectfractaltempmonoids}
For any integer $\ell\geq 2$, the period $\{1,1+1/\ell,\dots,1+(\ell-1)/\ell\}$ generates a fractal monoid whose $i$th period is $\pi_i=\{i,1+1/\ell^i,1+2/\ell^i,\dots,1+(\ell^i-1)/\ell^i\}$.
\end{example}

This example leads to the next definition.

\begin{definition}
  We say that the tempered monoid generated by
  the period $\{1,1+1/\ell,\dots,1+(\ell-1)/\ell\}$, for
  $\ell\geq 2$ is the {\em perfect fractal monoid of granularity $\ell$}. The perfect fractal monoid of granularity $2$ is called the {\em bisectional} fractal monoid.
\end{definition}

\begin{example}
The fractal monoid $D$ in Example~\ref{e:deustempmonoid} is a perfect fractal monoid.
\end{example}

We denote the golden ratio $\frac{1+\sqrt{5}}{2}$ by $\phi$. The proof
of the next theorem requires some lemmas and the whole
set has been moved to the end of the section.

\begin{theorem}\label{t:goldenfractal}
  \begin{enumerate}
    \item The period $\{1,\phi\}$ generates a fractal monoid. 
    \item The unique nonbisectional normalized fractal monoid of granularity $2$ is exactly the fractal monoid generated by the period $\{1,\phi\}$. 
  \end{enumerate}
\end{theorem}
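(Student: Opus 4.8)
The plan is to prove the two statements separately; the second (uniqueness) is a finite computation, while the real work is in the first, where one must verify that the fractal construction applied to $\{1,\phi\}$ produces a set that is genuinely closed under addition. Throughout I would set $\tau:=\phi-1=1/\phi$, so that $\tau^2+\tau=1$, equivalently $\tau^{k-1}=\tau^k+\tau^{k+1}$ for every $k\geq 1$; this last ``carry identity'' is what makes the addition work out. Note that the first period of a normalized fractal mold of granularity $2$ is $\{1,1+\tau\}$ for some $\tau\in(0,1)$, and that ``bisectional'' refers to $\tau=\tfrac12$.

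For part (1), I would first make the candidate mold completely explicit. Iterating \eqref{eq:genfractal} from $\{1,\phi\}$ and simplifying with $\tau^2=1-\tau$, one checks by induction on $i$ that every gap occurring in the $i$-th period has the form $\tau^k$, and — using $\tau^j+\tau^{j+1}=\tau^{j-1}$, so that the ``block'' built over an index $j$ occupies exactly the interval $[\tau^j,\tau^{j-1})$ — that the set $F_i$ of fractional parts of $\pi_i(M)$ is self-similar: $F_i=\{0\}\cup\bigcup_{j=1}^{i}\bigl(\tau^j+\tau^{j+1}F_{i-j}\bigr)$. This has cardinality $2^i$ (re-deriving the period-cardinality lemma), and unwinding the recursion identifies $F_i$ with the set of finite ``Zeckendorf-type'' sums $\sum_{t=1}^{m}\tau^{a_t}$ with $1\leq a_1<\cdots<a_m$, consecutive indices differing by at least $2$, and $a_m-m\leq i-1$; hence $M=\{\,n+f:\ n\in\mathbb N_0,\ f\in F_n\,\}$. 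Closure under addition then becomes a statement about golden-base arithmetic respecting the ``depth'' bound: if $f$ has depth $\leq a-1$ and $g$ has depth $\leq b-1$, then the canonical (no-two-consecutive) form of $f+g$ is either a $\tau$-sum $h<1$ of depth $\leq a+b-1$, or $1+h$ with $h$ a $\tau$-sum of depth $\leq a+b$, which puts $(n+f)+(n'+g)$ into $\pi_{n+n'}(M)$ or $\pi_{n+n'+1}(M)$ respectively. I would prove this by reducing the combined multiset of exponents of $f$ and $g$ to canonical form via the rules $\tau^k+\tau^{k+1}\to\tau^{k-1}$ and $2\tau^k\to\tau^{k-1}+\tau^{k+2}$ and tracking how the number of terms (bounded initially by $a+b$, since depth $\leq n-1$ forces at most $n$ terms) and the depth evolve. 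Showing that the expanding rule $2\tau^k\to\tau^{k-1}+\tau^{k+2}$ cannot be triggered often enough to overshoot the depth bound is the step I expect to be the main obstacle; the rest of part (1) is formal bookkeeping.

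For part (2), let $M$ be a non-bisectional normalized fractal mold of granularity $2$, with first period $\{1,1+\tau\}$, $\tau\in(0,1)\setminus\{\tfrac12\}$. Applying \eqref{eq:genfractal} once gives $\pi_2(M)=\{2,\ 2+\tau^2,\ 2+\tau,\ 2+2\tau-\tau^2\}$ (the ordering $0<\tau^2<\tau<2\tau-\tau^2<1$ holds for every $\tau\in(0,1)$), and applying it again, cutting each of these four subintervals at proportion $\tau$, gives the fractional parts of $\pi_3(M)$ as $0,\ \tau^3,\ \tau^2,\ 2\tau^2-\tau^3,\ \tau,\ \tau+\tau^2-\tau^3,\ 2\tau-\tau^2,\ 3\tau-3\tau^2+\tau^3$. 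Now closure forces $\mu_2+\mu_2=2+2\tau\in M$. If $\tau<\tfrac12$ then $2+2\tau\in[2,3)$ would have to equal one of the three non-integer elements of $\pi_2(M)$, but $2\tau=\tau^2$, $2\tau=\tau$ and $2\tau=2\tau-\tau^2$ each fail on $(0,\tfrac12)$, a contradiction; hence $\tau>\tfrac12$, so $2+2\tau\in(3,4)$ and its fractional part $2\tau-1\in(0,1)$ must be one of the eight values listed. It cannot equal $0,\tau^2,\tau$ or $2\tau-\tau^2$ (each forces $\tau\in\{0,1\}$), so it is one of the four new cut points; the equations $2\tau-1=2\tau^2-\tau^3$, $2\tau-1=\tau+\tau^2-\tau^3$ and $2\tau-1=3\tau-3\tau^2+\tau^3$ reduce, after dividing out the spurious factor $\tau-1$, to $\tau^2-\tau+1=0$, $\tau^2+1=0$ and $\tau^2-2\tau-1=0$ respectively, none of which has a root in $(\tfrac12,1)$, whereas $2\tau-1=\tau^3$ is $(\tau-1)(\tau^2+\tau-1)=0$, whose root in $(\tfrac12,1)$ is $\tau=\tfrac{\sqrt5-1}{2}=\phi-1$. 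Therefore $\tau=\phi-1$, i.e. the first period is $\{1,\phi\}$, and together with part (1) (which guarantees this period does generate a fractal mold) this gives the asserted uniqueness.
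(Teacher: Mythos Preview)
Your part (2) is correct and is essentially the paper's argument: compute the second and third periods from \eqref{eq:genfractal}, force $2(1+\tau)\in M$, and solve the resulting cubic. The paper streamlines the casework by first observing that for $\tau>1/2$ one has $3<2+2\tau<3+\tau$, so only the three candidates $3+\tau^3,\,3+\tau^2,\,3+2\tau^2-\tau^3$ need to be tested; you instead check all eight, which is harmless but unnecessary.

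Part (1) is where the gap sits. Your Zeckendorf-type description of $F_i$ is correct and is exactly the image of the paper's recursively defined map $f_\ell$ (your recursion $F_i=\{0\}\cup\bigcup_{j=1}^{i}(\tau^j+\tau^{j+1}F_{i-j})$ is what one gets by unrolling $f_\ell(n)=p\,f_{\ell-1}(n)$ or $p+p^2 f_{\ell-1}(n-2^{\ell-1})$ in the golden case). The closure statement you isolate --- ``$f\in F_a$, $g\in F_b$ implies $f+g\in F_{a+b}\cup(1+F_{a+b+1})$'' --- is precisely the key lemma the paper proves. But your proposed route, reducing $f+g$ to canonical form via the rewrite rules $\tau^k+\tau^{k+1}\to\tau^{k-1}$ and $2\tau^k\to\tau^{k-1}+\tau^{k+2}$ and tracking depth, is left open at exactly the hard point: you yourself flag that controlling how often the expanding rule fires is ``the main obstacle,'' and you give no argument for it. As written, this is an acknowledged hole rather than a proof.

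The paper avoids that obstacle entirely. Instead of normalising a sum of $\tau$-powers, it runs a double induction on $i+j$ using only the one-step recursion for $f_\ell$: write $f_i(a)$ and $f_j(b)$ each as $p\,f_{*}(\cdot)$ or $p+p^2 f_{*}(\cdot)$, expand the four resulting shapes for $f_i(a)+f_j(b)$, and reduce each to the inductive hypothesis together with a short auxiliary lemma handling $p+f_\ell(n)$. No carry bookkeeping is needed; the depth bound $i+j$ (or $i+j+1$ after an overflow of $1$) falls out directly from the recursion depth. If you want to complete your argument, the cleanest fix is to abandon the rewrite-rule viewpoint and mimic this layer-peeling induction on your $F_i$'s; the Zeckendorf parametrisation is a nice way to \emph{describe} $F_i$, but it is not the efficient way to prove closure.
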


\begin{definition}\label{d:goldenfractal}
The {\em golden fractal monoid} is the fractal monoid generated by the period $\{1,\phi\}$. It is denoted by $F$ and its terms are written $\varphi_1,\varphi_2,\varphi_3,\dots$ The first (rounded) terms are listed below. 

\begin{eqnarray*}
  F&=&\{0,
  1,
  1.6180, 2, 2.3820, 2.6180, 2.8541, 3, 3.2361, 3.3820, 3.5279, 3.6180,
  \\&&3.7639, 3.8541, 3.9443, 4, 4.1459, 4.2361, 4.3262, 4.3820, 4.4721, 4.5279, \\&&4.5836, 4.6180, 4.7082, 4.7639, 4.8197, 4.8541, 4.9098, 4.9443, 4.9787, 5, \\&&5.0902, 5.1459, 5.2016, 5.2361, 5.2918, 5.3262, 5.3607, 5.3820, 5.4377, \\&&5.4721, 5.5066, 5.5279, 5.5623, 5.5836, 5.6049, 5.6180, 5.6738, 5.7082, \\&&5.7426, 5.7639, 5.7984, 5.8197, 5.8409, 5.8541, 5.8885, 5.9098, 5.9311, \\&&5.9443, 5.9656, 5.9787, 5.9919, 6, \dots \}
\end{eqnarray*}
\end{definition}

The proof of Theorem~\ref{t:goldenfractal} will be preceded by three lemmas. For the lemmas we need some notation. 
Given $p\in(0,1)$, let $q=1-p$ and define 
$$\begin{array}{rrcl}
 f_0:& \{0\} &\rightarrow& [0,1)\\ 
  & 0 &\mapsto& 0\\\\
  f_\ell:& {\mathbb N}_0\cap [0,2^\ell) &\rightarrow& [0,1)\\ 
&n&\mapsto& \left\{\begin{array}{ll}p f_{\ell -1}(n) & \mbox{ if }n<2^{\ell-1},\\
p +q f_{\ell -1}(n-2^{\ell-1}) & \mbox{ if }n\geq 2^{\ell-1},\end{array}\right. 
\end{array}$$
for $\ell\in{\mathbb N}$. 
Notice that $f_\ell$ depends on the choice of $p$. 
The next lemma is a consequence of the definitions and describes the relationship between the maps $f_\ell$ and fractal monoids. 

\begin{lemma}\label{l:struct}
  If a tempered monoid $M=\{\mu_1,\mu_2,\mu_3,\dots\}$ with $\mu_i<\mu_{i+1}$ is fractal and has $\{1,1+p\}$ as its first period, then 
\begin{itemize}
  \item the elements of $M$ are exactly $\mu_i=\lfloor\log_2(i)\rfloor+f_{\lfloor\log_2(i)\rfloor}(i-2^{\lfloor\log_2(i)\rfloor})$,
\item 
  the $\ell$th period of $F$ is $\{\ell+f_\ell(0),\ell+f_\ell(1),\ell+f_\ell(2),\ell+f_\ell(3),\dots,\ell+f_\ell(2^\ell-1)\}$, with $\ell+f_\ell(0)<\ell+f_\ell(1)<\ell+f_\ell(2)<\ell+f_\ell(3)<\dots<\ell+f_\ell(2^\ell-1)$. 
  \end{itemize}
\end{lemma}

The next lemma states a simple result that will be used several times in 
the proofs of the following lemmas. It can be proved by induction. 

\begin{lemma}If $\ell,n\in{\mathbb N}_0$ with $\ell\geq 1$ and $n<2^{\ell-1}$, then $f_{\ell-1}(n)=f_{\ell}(2n)$. 
\end{lemma}

\mut{
\begin{proof}
  If $\ell=1$ the result is obvious. For $\ell>1$ we will proceed by induction. 
  \begin{eqnarray*}
    f_\ell(2n)&=&\left\{\begin{array}{ll}p f_{\ell -1}(2n) & \mbox{ if }2n<2^{\ell-1},\\p +q f_{\ell -1}(2n-2^{\ell-1}) & \mbox{ if }2n\geq 2^{\ell-1},\end{array}\right. 
    \\&=&\left\{\begin{array}{ll}p f_{\ell -1}(2n) & \mbox{ if }n<2^{\ell-2},\\p +q f_{\ell -1}(2n-2^{\ell-1}) & \mbox{ if }n\geq 2^{\ell-2},\end{array}\right. 
  \end{eqnarray*}
  Now, by the induction hypothesis, this equals 
  \begin{eqnarray*}
    \phantom{f_\ell(2n)}&=&\left\{\begin{array}{ll}p f_{\ell -2}(n) & \mbox{ if }n<2^{\ell-2},\\p +q f_{\ell -2}(n-2^{\ell-2}) & \mbox{ if }n\geq 2^{\ell-2},\end{array}\right.\\
&=&f_{\ell-1}(n). 
  \end{eqnarray*}
\end{proof}
}

In the next lemma we take $p=\phi-1=\frac{-1+\sqrt{5}}{2}=0.618033\dots$. 
 We use that, in this case, $p^2=1-p$, $1=p+p^2$ and $2p=1+p^3$. 

\begin{lemma}\label{l:p}
  Let $p=\phi-1$, $q=1-p$, and let $\ell,n\in{\mathbb N}_0$ with 
  $\ell>0$ and $n<2^\ell$. The sum 
  $p+f_\ell(n)$ is one of 
\begin{itemize}
  \item $f_{\ell+1}(c)$ for some $0\leq c<2^{\ell+1}$,
  \item $1+pf_{\ell+1}(d)$ for some $0\leq d<2^{\ell+1}$. 
\end{itemize}
\end{lemma}

\begin{proof}

    If $\ell=1$ then $n$ is either $0$ or $1$ and $p+f_1(0)=p+0=p=f_1(1)$, while $p+f_1(1)=p+p=1+p^3=1+pf_2(1)$. 
Suppose that $\ell>1$ and assume that the lemma is true for $\ell-1$.  

If $n<2^{\ell-2}$, then $p+f_\ell(n)=p+p f_{\ell -1}(n)=p+p^2 f_{\ell-2}(n)=f_{\ell-1}(n+2^{\ell-2})=f_{\ell+1}(2^2n+2^{\ell})$, with $0\leq 2^2n+2^{\ell}<2^{\ell+1}$. 

If $2^{\ell-2}\leq n<2^{\ell-1}$, then $p+f_\ell(n)=p+pf_{\ell-1}(n)=p+p^2+p^3 f_{\ell-2}(n-2^{\ell-2})=1+p^3f_{\ell-2}(n-2^{\ell-2})=1+pf_{\ell}(n-2^{\ell-2})=1+pf_{\ell+1}(2n-2^{\ell-1})  $, with $0\leq 2n-2^{\ell-1}<2^{\ell+1}$. 

If $n\geq 2^{\ell-1}$, then $p+f_\ell(n)=2p +p^2 f_{\ell -1}(n-2^{\ell-1}) = 1+p^3 +p^2 f_{\ell -1}(n-2^{\ell-1}) = 1+p^2(p+f_{\ell-1}(n-2^{\ell-1}))$ with $n-2^{\ell-1}<2^{\ell-1}$. 
By the induction hypothesis, this either equals $1+p^2f_{\ell}(c')=1+pf_{\ell+1}(c')$ with $c'<2^\ell$, and hence, with $c'<2^{\ell+1}$, or $1+p^2(1+pf_{\ell}(d'))=1+p(p+p^2f_{\ell}(d'))=1+pf_{\ell+1}(d'+2^\ell)$ with $d'<2^\ell$, and thus, with $d'+2^\ell<2^{\ell+1}$. 

\end{proof}

\begin{lemma}\label{l:key}
Let $p=\phi-1$ and $q=1-p$. 
If $i,j,a,b\in{\mathbb N}_0$, with $0\leq a<2^i$ and $0\leq b<2^j$, then 
$f_i(a)+f_j(b)$
is one of 
\begin{itemize}
  \item $f_{i+j}(c)$ for some $0\leq c<2^{i+j}$,
  \item $1+f_{i+j+1}(d)$ for some $0\leq d<2^{i+j+1}$. 
    \end{itemize}
\end{lemma}

\begin{proof}
We proceed by induction on $i+j$. 
If $i+j$ equals $0$ then the result is obvious. 
Suppose that $i+j>0$. 
If one of $i$ and $j$ is $0$ then the result is also obvious. Therefore, we can assume that both $i$ and $j$ are non-zero. 
On one hand,
$f_i(a)=pf_{i-1}(a')$ or 
$f_i(a)=p+qf_{i-1}(a')$ for some $a'<2^{i-1}$. 
On the other hand,
$f_j(b)=pf_{j-1}(b')$ or 
$f_j(b)=p+qf_{j-1}(b')$ for some $b'<2^{j-1}$. 
Hence, one of the next cases holds. 
\begin{enumerate}
\item $f_i(a)+f_j(b)=pf_{i-1}(a')+pf_{j-1}(b')=p(f_{i-1}(a')+f_{j-1}(b'))$
  for some $a'<2^{i-1}$ and some $b'<2^{j-1}$. 
  By the induction hypothesis, this equals one of 
  \begin{itemize}
  \item $pf_{i+j-2}(c)$ for some $0\leq c<2^{i+j-2}$,
  \item $p(1+f_{i+j-1}(d))$ for some $0\leq d<2^{i+j-1}$. 
  \end{itemize}
  On one hand, $pf_{i+j-2}(c)=f_{i+j-1}(c)=f_{i+j}(2c)$, with $2c<2^{i+j}$. 
  On the other hand, $p(1+f_{i+j-1}(d))$ can be either 
  $p(1+pf_{i+j-2}(d'))$ or $p(1+p+qf_{i+j-2}(d'))$ for some $d'<2^{i+j-2}$. 
  But $p(1+pf_{i+j-2}(d'))=p+p^2f_{i+j-2}(d')=f_{i+j-1}(d'+2^{i+j-2})=f_{i+j}(2d'+2^{i+j-1})$ with $2d'+2^{i+j-1}<2^{i+j}$,
  while \break $p(1+p+qf_{i+j-2}(d'))=p+p^2+p^3f_{i+j-2}(d')=1+f_{i+j+1}(d')$, with $d'<2^{i+j+1}$. 

\item $f_i(a)+f_j(b)=pf_{i-1}(a')+p+qf_{j-1}(b')$
  for some $a'<2^{i-1}$ and some $b'<2^{j-1}$. 
  If $i=1$, then $a'=0$ and $f_i(a)+f_j(b)=p+qf_{j-1}(b')=f_j(b'+2^{j-1})=f_{i+j}(2^ib'+2^{i+j-1})$, with $2^ib'+2^{i+j-1}<2^{i+j}$. Therefore, we can assume $i>1$. 
  Now, by the definition of $f$, the sum $f_i(a)+f_j(b)$ equals one of 
  \begin{itemize}
  \item $p^2f_{i-2}(a')+p+qf_{j-1}(b')=p+p^2(f_{i-2}(a')+f_{j-1}(b'))$ if $a'<2^{i-2}$,
  \item 
 $p(p+qf_{i-2}(a'-2^{i-2}))+p+qf_{j-1}(b')=1+p^2(pf_{i-2}(a'-2^{i-2})+f_{j-1}(b'))$
    if $a'\geq 2^{i-2}$. 
  \end{itemize}
  The first sum, by the induction hypothesis, is either $p+q(f_{i+j-3}(c'))=f_{i+j-2}(c'+2^{i+j-3})=f_{i+j}(2^2c'+2^{i+j-1})$ with $c'<2^{i+j-3}$ and hence, with $2^2c'+2^{i+j-1}<2^{i+j}$, or $p+q(1+f_{i+j-2}(d'))=1+p^2f_{i+j-2}(d')=1+f_{i+j}(d')=1+f_{i+1+1}(2d')$, with $d'<2^{i+j-2}$ (and hence with $2d'<2^{i+j+1}$). 

    To analyze the second sum, notice that $pf_{i-2}(a'-2^{i-2})<p$ and, thus, $pf_{i-2}(a'-2^{i-2})+f_{j-1}(b')<1+p$. Now, taking this into account, and applying the induction hypothesis, we have 
that 
$1+p^2(pf_{i-2}(a'-2^{i-2})+f_{j-1}(b'))$ has the form 
$1+p^2(f_{i+j-2}(c'))=1+f_{i+j}(c')=1+f_{i+j+1}(2c')$ for some $c'<2^{i+j-2}$ (and hence $2c'<2^{i+j+1}$),
or 
$1+p^2(1+pf_{i+j-2}(c''))=1+p(p+qf_{i+j-2}(c''))=1+pf_{i+j-1}(c''+2^{i+j-2})=1+f_{i+j}(c''+2^{i+j-2})=1+f_{i+j+1}(2c''+2^{i+j-1})$ for some $c''<2^{i+j-2}$ (and hence $2c''+2^{i+j-1}<2^{i+j+1}$). 
  
\item $f_i(a)+f_j(b)=p+qf_{i-1}(a')+pf_{j-1}(b')$ for some $a'<2^{i-1}$ and some $b'<2^{j-1}$. This case can be proved as the previous one. 
\item $f_i(a)+f_j(b)=p+qf_{i-1}(a')+p+qf_{j-1}(b')=2p+q(f_{i-1}(a')+f_{j-1}(b'))$ for some $a'<2^{i-1}$ and some $b'<2^{j-1}$. 
By the induction hypothesis, this equals one of 
\begin{itemize}
\item $2p+qf_{i+j-2}(c')$ for some $0\leq c'<2^{i+j-2}$,
\item $2p+q(1+f_{i+j-1}(d'))$ for some $0\leq d'<2^{i+j-1}$. 
\end{itemize}
On one hand, $2p+qf_{i+j-2}(c')=p+f_{i+j-1}(c'+2^{i+j-2})$ and the result follows by Lemma~\ref{l:p}. 

On the other hand,
$2p+q(1+f_{i+j-1}(d'))=1+p+qf_{i+j-1}(d')=1+f_{i+j}(d'+2^{i+j-1})=1+f_{i+j+1}(2d'+2^{i+j})$ with $2d'+2^{i+j}<2^{i+j+1}$. 
\end{enumerate}
\end{proof}

Now we are ready to prove Theorem~\ref{t:goldenfractal}. 

\begin{proof}
  \begin{enumerate}
  \item One needs to see that, for the case when $p=\phi-1$, the set described in Lemma~\ref{l:struct} is closed under addition. This is a direct consequence of Lemma~\ref{l:key}. 
    \item 
      Suppose that $M$ is a normalized nonbisectional fractal monoid of granularity $2$. Then its first period is $\{1,1+p\}$ for some $p$ with $0<p<1$. 
      By Lemma~\ref{l:struct}, the second and third periods of the tempered monoid must be 
  $$\begin{array}{l}
    \{2,2+p^2,2+p,2+2p-p^2\},\\
      \{3,3+p^3,3+p^2,3+2p^2-p^3,3+p,3+p+p^2-p^3,\\\ \ \ \ \ 3+2p-p^2,3+3p-3p^2+p^3\},\\
  \end{array}$$
  where the elements in each period are presented in increasing order. 
  
  If $M$ is nonbisectional then $p\neq 0.5$. 
  For $M$ to be closed under addition, it must hold that $(1+p)+(1+p)=2+2p\in M$. 
  
  If $p<0.5$, then $2+p<2+2p<3$. Looking at the elements of the second period, one can deduce that $2+2p=2+2p-p^2$, leading to $p=0$, which is out of the range. 

  Then, $p$ must be larger than $0.5$. In this case, $3<2+2p<3+p$. Looking at the elements of the third period, one can deduce that $2+2p$ is either 
$3+p^3$, $3+p^2$, or $3+2p^2-p^3$. This leads to the equations
$p^3-2p+1=(p^2+p-1)(p-1)=0$, $p^2-2p+1=(p-1)^2=0$ or
$p^3-2p^2+2p-1=(p^2-p+1)(p-1)=0$. Among these equations, the unique
one having a real solution in the range $0.5 < p < 1$ is the first one, being the solution $p=\frac{-1+\sqrt{5}}{2}=\phi-1$. Hence, $1+p=\phi$. 
  \end{enumerate}
\end{proof}

\section{Odd-filterable tempered monoids and odd-filterable numerical semigroups}

As explained in the introduction and illustrated in Figure~\ref{f:waves} and in Figure~\ref{f:wodd} (b), cylindrical pipes with one open end and one closed end produce only the harmonics corresponding to odd multiples of the frequency of the fundamental. This suggests the definitions of {\em odd-filterable tempered monoids} and {\em odd-filterable numerical semigroups}. 

\begin{definition}
A tempered monoid is {\em odd-filterable} if the elements in its ordered sequence with odd index form a tempered monoid.
\end{definition}

\begin{theorem}
The logarithmic monoid $L$ is odd-filterable.
\end{theorem}

\begin{proof}
The logarithmic monoid $L$ is formed by the elements in the increasing sequence $(\lambda_i)_{i\in{\mathbb N}}=(\log_2(i))_{i\in {\mathbb N}}$.
We claim that the sequence of its terms with odd index
$(\bar\lambda_{i})_{i\in{\mathbb N}}=(\lambda_{2i-1})_{i\in{\mathbb N}}=(\log_2(2i-1))_{i\in {\mathbb N}}$ is a tempered monoid.
Let us check the three conditions of a tempered monoid.
\begin{enumerate}
\item Its smallest element is $\bar\lambda_1=\log_2(1)=0$
\item $\bar\lambda_{n+1}-\bar\lambda_n=\log_2(2n+1)-\log_2(2n-1)=\log_2(\frac{2n+1}{2n-1})$. Since the sequence $\frac{2n+1}{2n-1}$ converges to $1$, for any $\varepsilon>0$ one can take $n_0$ large enough so that $\frac{2n+1}{2n-1}<2^\varepsilon$ for any $n>n_0$. Then $\bar\lambda_{n+1}-\bar\lambda_n<\varepsilon$ for any $n>n_0$.
\item The third condition follows from the fact that the product of odd integers is again an odd integer.
\end{enumerate}
\end{proof}

However, the golden fractal monoid $F$ is not odd-filterable. Indeed, let $\tau=\phi-1$. Then
$\varphi_3=1+\tau$,
$\varphi_5=2+\tau^2$, while
the sum of them is $\varphi_3+\varphi_5=3+\tau+\tau^2=3+1=4=\varphi_{16}$, which has an even index in $F$.

The notion of odd-filterability for numerical semigroups is a bit more elaborate.
Let $S=\lfloor mM\rfloor_\alpha$, that is, $S$ is a numerical semigroup which arises from a tempered monoid $M=\{M_i\}_{i\in{\mathbb N}}$ which has been discretized by a multiplicity $m$ under rounding by $\alpha$. Then we define the {\em collapse} of $S$ with respect to $\{M,m,\alpha\}$ as the smallest integer $\kappa$ such that $\kappa=\lfloor m\mu_i\rfloor_\alpha=\lfloor m\mu_{i+1}\rfloor_\alpha$ for some $i$.

\begin{example}
\label{ex:H}
  Consider the unique nonbisectional fractal monoid $F$ of granularity $2$.
  Discretize it by multiplicity $12$ and by flooring. That is,
  \begin{eqnarray*}\lfloor 12F\rfloor&=&
    \lfloor 0.0000 \rfloor, \lfloor 12.0000 \rfloor, \lfloor 19.4164 \rfloor, \lfloor 24.0000 \rfloor, \lfloor 28.5836 \rfloor, \lfloor 31.4164 \rfloor, \lfloor 34.2492 \rfloor, \\&&\lfloor 36.0000 \rfloor, \lfloor 38.8328 \rfloor, \lfloor 40.5836 \rfloor, \lfloor 42.3344 \rfloor, \lfloor 43.4164 \rfloor, \lfloor 45.1672 \rfloor, \lfloor 46.2492 \rfloor,
    \\&&\lfloor 47.3313 \rfloor, \lfloor 48.0000 \rfloor, \lfloor 49.7508 \rfloor, \lfloor 50.8328 \rfloor, \lfloor 51.9149 \rfloor, \lfloor 52.5836 \rfloor, \lfloor 53.6656 \rfloor,
    \\&& \lfloor 54.3344 \rfloor, \lfloor 55.0031 \rfloor, \lfloor 55.4164 \rfloor, \lfloor 56.4984 \rfloor, \lfloor 57.1672 \rfloor, \lfloor 57.8359 \rfloor, \lfloor 58.2492 \rfloor,
    \\&& \lfloor 58.9180 \rfloor, \lfloor 59.3313 \rfloor, \lfloor 59.7446 \rfloor, \lfloor 60.0000 \rfloor, \lfloor 61.0820 \rfloor, \lfloor 61.7508 \rfloor,\dots
  \end{eqnarray*}
Notice that the resulting semigroup $\lfloor 12F\rfloor$ is the
well-tempered harmonic semigroup $H$ in \eqref{eq:H}.
In this case, the collapse is $55$, since 
  $\lfloor 12\mu_{23}\rfloor=\lfloor 55.0031 \rfloor= \lfloor 12\mu_{24}\rfloor=\lfloor 55.4164 \rfloor=55$ and $\lfloor 12\mu_i\rfloor\neq\lfloor 12 \mu_{i+1}\rfloor$ for any $i<23$.
\end{example}

\begin{definition}
  Suppose $S$ is a numerical semigroup of the form $S=\lfloor
  mM\rfloor_\alpha$. We say that $S$ is {\em odd-filterable with
    respect to $M,m,\alpha$} if the sum of any two elements in the
  semigroup with odd index is another element in the semigroup which 
either has odd index, or is bigger than or equal to the collapse.
\end{definition}

\begin{example}\label{e:Hef}
Let us check that the well-tempered harmonic semigroup $H$ from \eqref{eq:H}, as a discretization of $F$ by flooring, is odd-filterable.
  Indeed, let us denote the elements of $H$ as follows:
  $h_1=0$, $h_2=12$, $h_3=19$, $h_4=24$, $h_5=28$, $h_6=31$, $h_7=34$, $h_8=36$, $h_9=38$, $h_{10}=40$, $h_{11}=42$, $h_{12}=43$,
  $h_{13}=45$, $h_{14}=46$, $h_{15}=47$, $h_{16}=48$, $h_{17}=49$, $h_{18}=50$, $h_{19}=51$, $h_{20}=52$, $h_{21}=53$, $h_{22}=54$, $h_{23}=55$, $h_{24}=56$, \dots

  Now, let us analyze all the sums of two elements with odd index.
  On one hand, $h_1+h_{2i-1}=h_{2i-1}$ for any $i\in{\mathbb N}$. If one of the summands is $h_3$, the options are 
  $h_3+h_3=h_9$, $h_3+h_5=h_{15}$, $h_3+h_7=h_{21}$, and $h_3+h_{2i-1}$, with $2i-1\geq 9$, which is larger than the collapse.
  For any other pair of summands, $h_{2i-1}+h_{2j-1}$, with $2i-1,2j-1\geq 5$, the sum is larger than the collapse.
\end{example}

\section{Emergence of the well tempered harmonic semigroup}

In this section we prove that the maximum number of equal divisions of the octave such that the discretizations of the golden fractal monoid $F$ and the logarithmic monoid $L$ coincide, and such that the discretization is odd-filterable is $12$. This is nothing else but the number of equal divisions of the octave in classical Western music.

\begin{theorem}
\label{t:pythfract}
  There exists a numerical semigroup of multiplicity $m$ that is simultaneously a discretization of the logarithmic monoid $L$ and a discretization of the golden fractal monoid $F$ if and only if $m\in\{1,2,3,4,5,6,7,8,9,10,12,13,18\}$.
\end{theorem}

\begin{proof}
  In Table~\ref{t:dis} 
we give a numerical semigroup of each multiplicity, whenever it exists, such that it is a simultaneous discretization of $L$ and $F$.

Let us prove now that there are no $\alpha_L$, $\alpha_F$ such that $\lfloor 11 L\rfloor_{\alpha_L}=\lfloor 11 F\rfloor_{\alpha_F}$, and $\lfloor 11 L\rfloor_{\alpha_L}$ is a numerical semigroup.
Indeed, suppose that, conversely, $\alpha_L$, $\alpha_F$ satisfy both conditions.
Observe that,
  $$\begin{array}{ll}  
  11\lambda_2=11 & 11\varphi_2=11\\
  11\lambda_3=17.4346 & 11\varphi_3=17.7984\\
  11\lambda_4=22 & 11\varphi_4=22
\end{array}$$
where the real values have been rounded.
The fact that $\lfloor 11 L\rfloor_{\alpha_L}=\lfloor 11 F\rfloor_{\alpha_F}$ implies that $\lfloor 11 \lambda_3\rfloor_{\alpha_L}=\lfloor 17.4346\rfloor_{\alpha_L}$ equals $\lfloor 11 \varphi_3\rfloor_{\alpha_F}=\lfloor 17.7984\rfloor_{\alpha_F}$, which in turn must be either $17$ or $18$. 
Since $11\varphi_8=33$ and $11\varphi_9=35.5967$, we deduce that $17+17=34\not\in \lfloor 11F\rfloor_{\alpha_F}$, and so $17$ can not be in the discretization.
On the other hand, since
$$\begin{array}{ll}  
11\lambda_8=33&	11\varphi_8=33\\
11\lambda_9=34.8692&	11\varphi_9=35.5967\\
11\lambda_{10}=36.5412&	11\varphi_{10}=37.2016\\
\end{array}$$
we deduce that $18+18=36$ does not belong to the discretization. So, the discretization is not a numerical semigroup.

\mut{
Let us prove now that there are no $\alpha_L$, $\alpha_F$ such that $\lfloor 14 L\rfloor_{\alpha_L}=\lfloor 14 F\rfloor_{\alpha_F}$, and $\lfloor 14 L\rfloor_{\alpha_L}$ is a numerical semigroup.
Indeed, suppose that, conversely, $\alpha_L$, $\alpha_F$ satisfy both conditions.
Observe that,
  $$\begin{array}{ll}  
  14\lambda_4=28 & 14\varphi_4=28\\
  14\lambda_5=  32.5070  & 14\varphi_5=33.3475\\
\end{array}$$
The fact that $\lfloor 14 L\rfloor_{\alpha_L}=\lfloor 14 F\rfloor_{\alpha_F}$ implies that
$\lfloor 14 \lambda_5\rfloor_{\alpha_L}=\lfloor 32.5070\rfloor_{\alpha_L}$ equals $\lfloor 14 \varphi_5\rfloor_{\alpha_F}=\lfloor 33.3475\rfloor_{\alpha_F}$, which in turn must equal $33$.
This means that
$\alpha_F> 0.3475$.
Now, let us consider 
$\lfloor 14\varphi_{3}\rfloor_{\alpha_F}=\lfloor 22.6525\rfloor_{\alpha_F}$. It may equal either
$22$ or $23$.
But, since $\alpha_F> 0.3475$,
\begin{eqnarray*}
  \lfloor 14\varphi_{8}\rfloor_{\alpha_F}=\lfloor  42\rfloor_{\alpha_F}&=&42,\\
  \lfloor 14\varphi_{9}\rfloor_{\alpha_F}=\lfloor 45.3050\rfloor_{\alpha_F}&=&45,\\
  \lfloor 14\varphi_{10}\rfloor_{\alpha_F}=\lfloor 47.3475\rfloor_{\alpha_F}&=&47,\\
\end{eqnarray*}
and so,
it happens that $2\lfloor 14\varphi_{3}\rfloor_{\alpha_F}$ does not belong to $\lfloor 14 F\rfloor_{\alpha_F}$.
So, $\lfloor 14 F\rfloor_{\alpha_F}$ is not a numerical semigroup.

Let us prove now that there are no $\alpha_L$, $\alpha_F$ such that $\lfloor 15 L\rfloor_{\alpha_L}=\lfloor 15 F\rfloor_{\alpha_F}$, and $\lfloor 15 L\rfloor_{\alpha_L}$ is a numerical semigroup.
Indeed, suppose that, conversely, $\alpha_L$, $\alpha_F$ satisfy both conditions.
Observe that,
  $$\begin{array}{ll}  
  15\lambda_4=30 & 15\varphi_4=30\\
  15\lambda_5=34.8289& 15\varphi_5=35.7295\\
\end{array}$$
The fact that $\lfloor 15 L\rfloor_{\alpha_L}=\lfloor 15 F\rfloor_{\alpha_F}$ implies that
$\lfloor 15 \lambda_5\rfloor_{\alpha_L}=\lfloor 34.8289\rfloor_{\alpha_L}$ equals $\lfloor 15 \varphi_5\rfloor_{\alpha_F}=\lfloor35.7295\rfloor_{\alpha_F}$, which in turn must equal $35$.
This means that $\alpha_L\leq 0.8289$ and that
$\alpha_F> 0.7295$.
In particular,
$$\lfloor 15\lambda_{21}\rfloor_{\alpha_L}=\lfloor 65.8848\rfloor_{\alpha_L}=66.$$
But,
\begin{eqnarray*}
  \lfloor 15\varphi_{20}\rfloor_{\alpha_F}=\lfloor
  65.7295\rfloor_{\alpha_F}&=&65,\\
  \lfloor 15\varphi_{21}\rfloor_{\alpha_F}=\lfloor
67.0820\rfloor_{\alpha_F}&=&67,
\end{eqnarray*}
and so $66$ belongs to $\lfloor 15L\rfloor_{\alpha_L}$ but
it does not belong to $\lfloor 15F\rfloor_{\alpha_F}$, contradicting the hypotheses.
}

Similar arguments show that for $m=14,15,16,17$, and for $m>18$,
there are no $\alpha_L$, $\alpha_F$ such that $\lfloor m L\rfloor_{\alpha_L}=\lfloor m F\rfloor_{\alpha_F}$, and $\lfloor m L\rfloor_{\alpha_L}$ is a numerical semigroup.
As an example, for $m=34$,
$34\lambda_5=78.9456$ while $34\varphi_5=80.9868$.
For multiplicities larger than $34$, it holds that
$m\varphi_4=m\lambda_4=2m$, while 
$m\varphi_5>m\lambda_5+2>2m$.
So, to have $\lfloor m L\rfloor_{\alpha_L}=\lfloor m F\rfloor_{\alpha_F}$,
one needs $\lfloor m \lambda_5\rfloor_{\alpha_L}=\lfloor m \varphi_5\rfloor_{\alpha_F}$, which is impossible since $m\varphi_5>m\lambda_5+2$.
\end{proof}

\begin{table}
{\footnotesize
\begin{itemize}
\item $\lfloor 1 L\rfloor_{0.50}=\lfloor 1 F\rfloor_{1.00}=\{0, 1, \dots\}$
\item $\lfloor 2 L\rfloor_{0.50}=\lfloor 2 F\rfloor_{1.00}=\{0, 2, 3, \dots\}$
\item $\lfloor 3 L\rfloor_{0.50}=\lfloor 3 F\rfloor_{0.85}=\{0, 3, 5, 6, 7, 8, \dots\}$
\item $\lfloor 4 L\rfloor_{0.28}=\lfloor 4 F\rfloor_{0.47}=\{0, 4, 7, 8, 10, 11, 12, 13, 14, \dots\}$
\item $\lfloor 5 L\rfloor_{0.50}=\lfloor 5 F\rfloor_{0.90}=\{0, 5, 8, 10, 12, 13, 14, 15, 16, 17, \dots\}$
\item $\lfloor 6 L\rfloor_{0.01}=\lfloor 6 F\rfloor_{0.41}=\{0, 6, 10, 12, 14, 16, 17, 18, 20, 21, 22, 23, 24, 25, 26, \dots\}$
\item $\lfloor 7 L\rfloor_{0.50}=\lfloor 7 F\rfloor_{0.97}=\{0, 7, 11, 14, 16, 18, 20, 21, 22, 23, 24, 25, 26, 27, \dots\}$
\item $\lfloor 8 L\rfloor_{0.35}=\lfloor 8 F\rfloor_{0.83}=\{0, 8, 13, 16, 19, 21, 23, 24, 26, 27, 28, 29, 30, 31, 32, 33, 34, \dots\}$
\item $\lfloor 9 L\rfloor_{0.13}=\lfloor 9 F\rfloor_{0.56}=$\\\ \ $=  \{0, 9, 15, 18, 21, 24, 26, 27, 29, 30, 32, 33, 34, 35, 36, 37, 38, 39, 40, 41, \dots\}$
\item $\lfloor 10 L\rfloor_{0.50}=\lfloor 10 F\rfloor_{1.00}=$\\\ \ $=\{0, 10, 16, 20, 23, 26, 28, 30, 32, 33, 35, 36, 37, 38, 39, 40, 41, 42, 43, 44, 45, \dots\}$
\item $\lfloor 12 L\rfloor_{0.40}=\lfloor 12 F\rfloor_{1.00}=$\\\ \ $=\{0, 12, 19, 24, 28, 31, 34, 36, 38, 40, 42, 43, 45, 46, 47, 48, 49, 50, 51, 52, 53, 54, 55, 56,$ $57, \dots\}$
\item $\lfloor 13 L\rfloor_{0.18}=\lfloor 13 F\rfloor_{0.94}=$\\\ \ $=\{0, 13, 21, 26, 31, 34, 37, 39, 42, 44, 45, 47, 48, 50, 51, 52, 53, 55, 56, 57, 58, 59, 60, 61, 62, 63,$ $64, 65, 66, 67, 68, \dots\}$
\item $\lfloor 18 L\rfloor_{0.05}=\lfloor 18 F\rfloor_{0.88}=$\\\ \ $=\{0, 18, 29, 36, 42, 47, 51, 54, 58, 60, 63, 65, 67, 69, 71, 72, 74, 76, 77, 78, 80, 81, 82, 83, 84, 85,$ $86, 87, 88, 89, 90, 91, 92, 93, 94, 95, 96, 97, 98, \dots\}$
\end{itemize}}
\caption{Numerical semigroups of each multiplicity, whenever they exist, such that they are simultaneous discretizations of $L$ and $F$.}
\label{t:dis}
\end{table}

\mut{
\begin{lemma}
  Notacio antiga
  There are no $\alpha_L$, $\alpha_F$ such that
  \begin{itemize}
  \item $\lfloor 16 L\rfloor_{\alpha_L}=\lfloor 16 F\rfloor_{\alpha_F}$
  \item $\lfloor 16 L\rfloor_{\alpha_L}$ is a numerical semigroup.
  \end{itemize}
\end{lemma}

\begin{proof}
Suppose that $\alpha_L$, $\alpha_F$ are as in the statement.
On one hand,
$$\begin{array}{ll}
16\lambda_3=32 & 16\varphi_3=32\\
16\lambda_4=37.150850 & 16\varphi_4=38.111456\\
16\lambda_5=41.359400 & 16\varphi_5=41.888544
\end{array}.$$
The fact that $\lfloor 16 L\rfloor_{\alpha_L}=\lfloor 16 F\rfloor_{\alpha_F}$ implies that
$\lfloor 16 \lambda_4\rfloor_{\alpha_L}=\lfloor 37.150850\dots\rfloor_{\alpha_L}$ equals $\lfloor 16 \varphi_4\rfloor_{\alpha_F}=\lfloor 38.111456\dots\rfloor_{\alpha_F}$, which in turn must equal $38$.
In particular, this means that $\alpha_L\leq 0.150850\dots$. In particular,
$$\lfloor 16\lambda_2\rfloor_{\alpha_L}=\lfloor 25.359400\rfloor_{\alpha_L}=26.$$
But
\begin{eqnarray*}
  \lfloor 16\lambda_8\rfloor_{\alpha_L}=\lfloor 50.718800\dots\rfloor_{\alpha_L}&=&51\\
  \lfloor 16\lambda_9\rfloor_{\alpha_L}=\lfloor 53.150850\dots\rfloor_{\alpha_L}&=&54
\end{eqnarray*}
Hence, $\lfloor 16\lambda_2\rfloor_{\alpha_L}+ \lfloor 16\lambda_2\rfloor_{\alpha_L}$ does not belong to $\lfloor 16 L\rfloor_{\alpha_L}$, meaning that $\lfloor 16 L\rfloor_{\alpha_L}$ is not a numerical semigroup, and so contradicting the hypothesis.
  \end{proof}

\begin{lemma}
  Notacio antiga
  There are no $\alpha_L$, $\alpha_F$ such that
  \begin{itemize}
  \item $\lfloor 17 L\rfloor_{\alpha_L}=\lfloor 17 F\rfloor_{\alpha_F}$
  \item $\lfloor 17 L\rfloor_{\alpha_L}$ is a numerical semigroup.
  \end{itemize}
\end{lemma}

\begin{proof}
2: 26.944363->[26-27] 	27.506578->[27-28] 	27 
7: 51.000000->51 	51.000000->51 		51 (enter)
8: 53.888725->[53-54] 	55.013156->[55-56] 	55 
\end{proof}

\begin{lemma}
  Notacio antiga
  There are no $\alpha_L$, $\alpha_F$ such that
  \begin{itemize}
  \item $\lfloor 19 L\rfloor_{\alpha_L}=\lfloor 19 F\rfloor_{\alpha_F}$
  \item $\lfloor 19 L\rfloor_{\alpha_L}$ is a numerical semigroup.
  \end{itemize}
\end{lemma}

\begin{proof}
alpha=0.116634 		 alpha=0.772063
19*PL[15]=76.000000 	 19*FM[15]=76.000000 
19*PL[16]=77.661794 	 19*FM[16]=78.772063 
19*PL[17]=79.228575 	 19*FM[17]=80.485292 

1: 19.000000->19 	19.000000->19 		19 (enter)
2: 30.114288->[30-31] 	30.742646->30 30 V(0.742646 0.772063) 
3: 38.000000->38 	38.000000->38 		38 (enter)

30+30=60

7: 57.000000->57 	57.000000->57 		57 (enter)
8: 60.228575->61 	61.485292->61 	61 
\end{proof}

\begin{lemma}
  Notacio antiga
  There are no $\alpha_L$, $\alpha_F$ such that
  \begin{itemize}
  \item $\lfloor 20 L\rfloor_{\alpha_L}=\lfloor 20 F\rfloor_{\alpha_F}$
  \item $\lfloor 20 L\rfloor_{\alpha_L}$ is a numerical semigroup.
  \end{itemize}
\end{lemma}

\begin{proof}
11: 71.699250->72 	72.360680->[72-73] 	72 
12: 74.008794->[74-75] 	75.278640->75 	75 
13: 76.147098->[76-77] 	77.082039->77 	77 
implies $\alpha_L\leq 0.008794$.

15: 80.000000->80 	80.000000->80 		80 (enter)
16: 81.749257->82 	82.917961->[82-83] 	82 
17: 83.398500->[83-84] 	84.721360->[84-85] 	84 
implies $\alpha_F\geq 0.917961$.

13: 76.147098->[76-77] 	77.082039->77 	77 
14: 78.137812->[78-79] 	78.885438->[78-79] [78 79] 
15: 80.000000->80 	80.000000->80 		80 (enter)
implies $\lfloor20\lambda_{14}\rfloor_{\alpha_L}=
\lfloor20\varphi_{14}\rfloor_{\alpha_F}$.
But, because of the bounds on $\alpha_L$ and $\alpha_F$,
it holds $\lfloor20\lambda_{14}\rfloor_{\alpha_L}=79$, while 
$\lfloor20\varphi_{14}\rfloor_{\alpha_F}=78$. 
\end{proof}

m=20
15: 80.000000->80 	80.000000->80 		80 (enter)
16: 81.749257->82 	82.917961->82 	82 
17: 83.398500->84 	84.721360->84 	84 
-> .91 ha de baixar

11: 71.699250->72 	72.360680->72 	72 
12: 74.008794->75 	75.278640->75 	75 
13: 76.147098->77 	77.082039->77 	77 
-> .008 ha de pujar

13: 76.147098->77 	77.082039->77 	77 
14: 78.137812->79 	78.885438->78 79 A(0.137812 0.008793) 
15: 80.000000->80 	80.000000->80 		80 (enter)

m=21,
2: 33.284213->34 	33.978714->[33-34] 34 A(0.284213 0.017600) 

7: 63.000000->63 	63.000000->63 		63 (enter)
8: 66.568425->67 	67.957428->[67-68] 	67 
9: 69.760490->70 	71.021286->71 	71 

m=22,
7: 66.000000->66 	66.000000->66 		66 (enter)
8: 69.738350->[69-70] 	71.193496->[71-72] 	71 
9: 73.082418->[73-74] 	74.403252->[74-75] 	74 

m=23
7: 69.000000->69 	69.000000->69 		69 (enter)
8: 72.908275->[72-73] 	74.429563->[74-75] 	74 
9: 76.404346->[76-77] 	77.785218->[77-78] 	77 

m=24
3: 48.000000->48 	48.000000->48 		48 (enter)
4: 55.726274->[55-56] 	57.167184->[57-58] 	57 
5: 62.039100->[62-63] 	62.832816->[62-63] [62 63] 

m=25

9: 83.048202->[83-84] 	84.549150->[84-85] 	84 
10: 86.485790->[86-87] 	88.196601->[88-89] 	88 
11: 89.624063->[89-90] 	90.450850->[90-91] 	90

m=26
5: 67.209025->[67-68] 	68.068884->[68-69] 	68 
6: 72.991228->[72-73] 	74.206651->[74-75] 	74 
7: 78.000000->78 	78.000000->78 		78 (enter)

m=27
3: 54.000000->54 	54.000000->54 		54 (enter)
4: 62.692059->[62-63] 	64.313082->[64-65] 	64 
5: 69.793988->[69-70] 	70.686918->[70-71] 	70 

m=28
7: 84.000000->84 	84.000000->84 		84 (enter)
8: 88.757900->[88-89] 	90.609903->[90-91] 	90 
9: 93.013987->[93-94] 	94.695048->[94-95] 	94 

m=29
3: 58.000000->58 	58.000000->58 		58 (enter)
4: 67.335915->[67-68] 	69.077014->[69-70] 	69 
5: 74.963913->[74-75] 	75.922986->[75-76] 	75 

m=30
3: 60.000000->60 	60.000000->60 		60 (enter)
4: 69.657843->[69-70] 	71.458980->[71-72] 	71 
5: 77.548875->[77-78] 	78.541020->[78-79] 	78 

m=31
3: 62.000000->62 	62.000000->62 		62 (enter)
4: 71.979771->[71-72] 	73.840946->[73-74] 	73 
5: 80.133838->[80-81] 	81.159054->[81-82] 	81 

m=32
3: 64.000000->64 	64.000000->64 		64 (enter)
4: 74.301699->[74-75] 	76.222912->[76-77] 	76 
5: 82.718800->[82-83] 	83.777088->[83-84] 	83 

m=33
4: 76.623627->[76-77] 	78.604878->[78-79] 	78 

m=34
4: 78.945555->[78-79] 	80.986844->[80-81] 	80 
}

\begin{theorem}
\label{t:pythfractef}
  There exists an odd-filterable numerical semigroup of multiplicity $m$ that is simultaneously a discretization of the logarithmic monoid $L$ and a discretization of the golden fractal monoid $F$ if and only if
$m\in\{1,2,3,4,5,6,7,8,10,12\}$.
\end{theorem}

\begin{proof}
  First of all, we leave to the reader to check that the numerical semigroups
in Table~\ref{t:dis},
except for
$\lfloor 9 L\rfloor_{0.13}=\lfloor 9 F\rfloor_{0.56}$,
$\lfloor 13 L\rfloor_{0.18}=\lfloor 13 F\rfloor_{0.94}$, and
$\lfloor 18 L\rfloor_{0.05}=\lfloor 18 F\rfloor_{0.88}$, are odd-filterable.
The case of flooring of $F$ with multiplicity $12$ is shown in Example~\ref{e:Hef}, and the other cases can be shown in a similar way.
  
Let us show now that there is no odd-filterable numerical semigroup of multiplicity $9$ that is simultaneously a discretization of $L$ and a discretization of $F$. Indeed, suppose that $S=\lfloor 9L\rfloor_{\alpha_L}=\lfloor 9F\rfloor_{\alpha_F}$ is a numerical semigroup for some $\alpha_L$ and some $\alpha_F$.
Let $S=\{s_1=0, s_2,s_3,\dots\},$ with $s_i<s_{i+1}$ for all $i \in {\mathbb N}$.
We deduce that $s_3$ must be either $14$ or $15$. Since $28$ is not in $\lfloor 9F\rfloor_\alpha$ for any $\alpha$, $s_3$ must be $15$. Then $30$ must be in $\lfloor 9F\rfloor_{\alpha_F}$  and the unique option is $30=\lfloor 9\lambda_{10}\rfloor_{\alpha_L}=\lfloor 9\varphi_{10}\rfloor_{\alpha_F}$, which has even index and is smaller than the collapse. Then the semigroup can not be odd-filterable.

Similarly, it can be shown that there is no odd-filterable numerical
semigroup of multiplicity $13$ or $18$ that is simultaneously a
discretization of $L$ and a discretization of $F$, and it is left to
the reader.

\mut{Let us show now that there is no odd-filterable numerical semigroup of multiplicity $13$ that is simultaneously a discretization of $L$ and a discretization of $F$. Indeed,
suppose that $S=\lfloor 13L\rfloor_{\alpha_L}=\lfloor 13F\rfloor_{\alpha_F}$ is a numerical semigroup for some $\alpha_L$ and some $\alpha_F$.
Let $S=\{s_1=0, s_2,s_3,\dots\},$ with $s_i<s_{i+1}$ for all $i \in {\mathbb N}$.
Observe that 
$$\begin{array}{ll}
13\lambda_{8}=39	 & 13\varphi_{9}=39\\
13\lambda_{9}=41.2090	 & 13\varphi_{9}=42.0689\\
\end{array}$$
implies 
$s_8=42$ and so $\alpha_L \leq 0.2090$. 
This, together with 
$$
\begin{array}{ll}
13\lambda_{10}=43.1851	 & 13\varphi_{10}=43.9656\\ 
13\lambda_{11}=44.9726	 & 13\varphi_{11}=45.8622\\
\end{array}
$$
implies $s_{11} =45$ and so $\alpha_F> 0.8622$.
From these bounds on $\alpha_L$ and $\alpha_F$ 
we deduce that either 
\begin{eqnarray}   
S&=&\{0, 13, 21, 26, s_4, 34, 37, 39, 42, s_9, 45, 47,s_{12}, 50,  51,
     52, s_{16}, 55, 56, \label{nocincset}\\&&58, 59, 60,  61, 62, 63, 64, 65, 66, 67, 68,
     69, 70, 71, 72, 73, 74, 75, 76,  77,\dots \},\nonumber
\end{eqnarray} 
or 
\begin{eqnarray*}   
S&=&\{0, 13, 21, 26, s_4, 34, 37, 39, 42, s_9, 45, 47,s_{12}, 50,  51,
     52, s_{16}, 55, 56, 57, \\&&58, 59, 60,  61, 62, 63, 64, 65, 66, 67,
     68, 69, 70, 71, 72, 73, 74, 75, 76,  77,\dots \},
\end{eqnarray*} 
where $s_5$ is $30$ or $31$,
$s_{10}$ is  $43$ or $44$,
$s_{13} $ is $48$ or $49$,
$s_{17} $ is $53$ or $54$. 

Suppose first that $s_5=30$. 
Since $13\lambda_{5}=30.1851$, $13\varphi_{5}=30.9656$, this implies 
$\alpha_L>0.1851$ and $\alpha_F >0.9656$. 
Now, $\alpha_L>0.1851$ implies that $\lfloor  13 \lambda_{21}\rfloor_{\alpha_L } =\lfloor 57.1001 \rfloor_{\alpha_L } =57\in S$. 
But $\alpha_F >0.9656$, together with
$$\begin{array}{l}
13\varphi_{20}=56.9656\\13\varphi_{21}=58.1378\end{array}$$
implies that 
$57\not\in \lfloor  13 F\rfloor_{\alpha_F}$, a contradiction. Hence,
$s_5=31$. 

Suppose, consequently, that $s_5=31$. 
In particular, $57=31+26$ needs to be in $S$ and so we discard the 
case \eqref{nocincset}. 
By semigroup properties, $s_{10}=44$, since it equals $13+31$. 

Suppose that $s_{13} =48$. Since  $13\varphi_{13}=48.9311$, this 
implies  $\alpha_F >0.9311$, which, in turn, 
implies $s_{17} =\lfloor 13\varphi_{17}\rfloor_{\alpha_F}=\lfloor  53.8967 \rfloor_{\alpha_F}=53$. 
So, we get, 
$$S=\{0, 13, 21, 26, 31, 34, 37, 39, 42, 44, 45, 47,48, 50,  51, 52,
53, 55, 56, 57, 58, 59, 60,\dots\}.$$
It is easy to check that this semigroup is
$\lfloor 13L\rfloor_{.18}=\lfloor 13F\rfloor_{.94}$.

Suppose now that $s_{13} =49$. Since 
$13\lambda_{13}=48.1057$, this 
implies  $\alpha_L \leq 0.1057$, which, in turn, 
implies $s_{17} =\lfloor 13\lambda_{17}\rfloor_{\alpha_L}=\lfloor  53.1370\rfloor_{\alpha_L}=54$. 
So, we get, 
$$S=\{0, 13, 21, 26, 31, 34, 37, 39, 42, 44, 45, 47,49, 50,  51, 52,54, 55, 56, 57, 58, 59, 60,\dots\}.$$
In this case, the semigroup is 
$\lfloor 13L\rfloor_{.1}=\lfloor 13F\rfloor_{.89}$. 

None of the two previous semigroups are odd-filterable numerical 
semigroups since in both cases 
$s_3+s_5=21+31=52=s_{16}$. 
So, there is no odd-filterable numerical semigroup of multiplicity $13$ that is simultaneously a discretization of $L$ and a discretization of $F$.

Let us finally show that there is no odd-filterable numerical semigroup of multiplicity $18$ that is simultaneously a discretization of $L$ and a discretization of $F$. Indeed, suppose that $S=\lfloor 18L\rfloor_{\alpha_L}=\lfloor 18F\rfloor_{\alpha_F}$ is a numerical semigroup for some $\alpha_L$ and some $\alpha_F$.
Let $S=\{s_1=0, s_2,s_3,\dots\},$ with $s_i<s_{i+1}$ for all $i \in{\mathbb N}$.
Observe that on one hand
$$\begin{array}{ll} 18\lambda_{16}=72	 & 18\varphi_{16}=72\\ 18\lambda_{17}=73.5743	 & 18\varphi_{17}=74.6262\\ 18\lambda_{18}=75.0587	 & 18\varphi_{18}=76.2492 \end{array}$$ implies $\alpha_L \leq 0.0587$. 
This already implies that the unique option for $S$ is
the semigroup $\lfloor 9 L\rfloor_{0.13}=\lfloor 9 F\rfloor_{0.56}$ given in the proof of Theorem~\ref{t:pythfract}.
Moreover, $\alpha_L \leq 0.0587$ 
implies 
that the collapse of $S$  with respect to $L$ is $90$. On the other hand, $$\begin{array}{ll} 18\lambda_{4}=36	 & 18\varphi_{4}=36\\ 18\lambda_{5}=41.7947	 & 18\varphi_{5}=42.8754\end{array}$$ implies $\alpha_F > 0.8754$ and this in turn implies that the collapse of $S$  with respect to $F$ is at least $88$. Now, $S$ is not an odd-filterable numerical semigroup with respect to $L$, $F$, since $s_3+s_9=29+58=87=s_{28}$ and $s_{28}$ is smaller than the collapse in both cases.}
\end{proof}    

Let us finally check that the numerical semigroup $H$ in the introduction, which equals $\lfloor 12 L\rfloor_{0.40}=\lfloor 12 F\rfloor_{1.00}$ in the proof of Theorem~\ref{t:pythfract} is, indeed, the unique simultaneous discretization of $L$ and $F$ with multiplicity $12$.

\begin{theorem}
\label{t:twelve}
The unique numerical semigroup of multiplicity $12$ that is simultaneously a discretization of the logarithmic monoid $L$ and a discretization of the golden fractal monoid $F$ is the well-tempered numerical semigroup $H$.
\end{theorem}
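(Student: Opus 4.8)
The plan is to take an arbitrary numerical semigroup $S=\lfloor 12L\rfloor_{\alpha_L}=\lfloor 12F\rfloor_{\alpha_F}$ (which exists by Theorem~\ref{t:pythfract}) and show that it must equal $H$, no matter which rounding parameters $\alpha_L,\alpha_F$ produced it. The starting point is that $L$ and $F$ share the octave anchors: $\lambda_{2^k-1}=\varphi_{2^k-1}=k$, so $12k\in S$ for every $k$, and moreover $\lfloor 12\lambda_{2^k-1}\rfloor_{\alpha_L}=\lfloor 12\varphi_{2^k-1}\rfloor_{\alpha_F}=12k$ for any rounding. Since the discretization of a mold is non-decreasing in the index and strictly increasing below its collapse, for every index $i$ smaller than both collapses the $i$th element $s_i$ of $S$ equals simultaneously $\lfloor 12\lambda_i\rfloor_{\alpha_L}$ and $\lfloor 12\varphi_i\rfloor_{\alpha_F}$; in particular $s_i$ must be an integer lying in $\{\lfloor 12\lambda_i\rfloor,\lceil 12\lambda_i\rceil\}\cap\{\lfloor 12\varphi_i\rfloor,\lceil 12\varphi_i\rceil\}$, and the exact rounding it realizes imposes one-sided bounds on $\alpha_L$ and on $\alpha_F$.

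Reading the values $12\lambda_i,12\varphi_i$ off the tables (Appendix~B and Example~\ref{e:H}), this intersection is a single integer for most small $i$, which immediately forces $s_4=28$, $s_5=31$, $s_6=34$, $s_{10}=42$, $s_{11}=43$, and so on. Where two integers survive, the larger one is eliminated by semigroup closure together with the bounds already accumulated: for instance $s_2\in\{19,20\}$, but $s_2=20$ would put $40=s_2+s_2$ into $S$, whereas no $\varphi_j$ rounds to $40$ unless $\alpha_F>0.58$, while $s_2=20$ itself requires $\alpha_F\leq 0.42$ --- a contradiction, so $s_2=19$. Iterating this bookkeeping one obtains $s_0=0$, $s_1=12$, $s_2=19$, $s_3=24$, $s_4=28$, $s_5=31$, $s_6=34$, $s_7=36$, $s_8=38$, $s_9=40$, $s_{10}=42$, $s_{11}=43$ (using e.g.\ $19+19=38$ to force $s_8=38$ and $12+28=40$ to force $s_9=40$), and along the way the feasible parameter region shrinks to a lower bound on $\alpha_F$ (around $\alpha_F>0.83$) and an upper bound on $\alpha_L$ (around $\alpha_L\leq 0.4$), the precise constants to be read from the tables.

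It remains to identify the tail. From the lower bound on $\alpha_F$ one gets $44\notin\lfloor 12F\rfloor_{\alpha_F}$ --- the only $\varphi_j$ near $44$ is $\varphi_{11}$, which rounds to $44$ only when $\alpha_F\leq 0.42$ --- hence $44\notin S$; and the bounds on $\alpha_L$ force $45,46,47\in S$ together with $s_{15}=48$, where at each step a value missing from one of the two discretizations for the admissible parameter range is recovered because it must then lie in the other. Finally, if $n$ is the smallest gap of $S$ with $n\geq 45$ and $n\geq 57$, then $n-12\geq 45$ is a smaller element of $S$ and $n=(n-12)+12\in S$, a contradiction; so the only integers $\geq 45$ that could fail to lie in $S$ are among $\{49,\dots,56\}$, and a direct check of these eight values against the tables of $12\lambda_i,12\varphi_i$, using the established bounds on $\alpha_L,\alpha_F$ (and again the ``absent from one implies present in the other'' principle), shows all eight lie in $S$. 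Therefore $S=\{0,12,19,24,28,31,34,36,38,40,42,43\}\cup\{i\in{\mathbb N}:i\geq 45\}=H$.

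The difficulty is entirely in the bookkeeping: one juggles two independent rounding parameters against a long list of near-integer values $12\lambda_i,12\varphi_i$, repeatedly shrinking the feasible $(\alpha_L,\alpha_F)$-region and invoking semigroup closure to kill the surviving ambiguities. The feature that distinguishes this argument from the nonexistence proofs for multiplicities $11,14,15$ is that here the feasible region is \emph{nonempty} --- the flooring witness $\lfloor 12L\rfloor_{0.40}=\lfloor 12F\rfloor_{1.00}$ already exists --- so one cannot aim for an outright contradiction but must verify that every admissible pair reproduces the same set $H$. I expect the most delicate stretch to be the indices around $i=16,17$, where $12\lambda_i$ and $12\varphi_i$ are both within a few hundredths of an integer and the ``round up versus round down'' choices on the $L$-side and the $F$-side must be matched without slack; this is precisely where an unjustified choice could otherwise slip in a spurious simultaneous discretization.
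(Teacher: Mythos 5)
Your proposal is correct and takes essentially the same route as the paper: pin down $\alpha_L$ and $\alpha_F$ via the simultaneous rounding constraint on the early indices (the paper starts from $s_4$, giving $\alpha_L\leq 0.8631$ and $\alpha_F>0.5836$; you start from $s_2$ by contradiction), tighten $\alpha_F$ using semigroup closure ($19+19=38$), and then read the remaining elements off the Appendix~B table. Your version spells out more of the bookkeeping and adds an explicit tail argument ($n=(n-12)+12$ for $n\geq 57$) that the paper compresses into ``we deduce that $S=H$'', but the underlying mechanism is identical.
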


\begin{proof}
Suppose that $S=\lfloor 12L\rfloor_{\alpha_L}=\lfloor 12F\rfloor_{\alpha_F}$ is a numerical semigroup for some $\alpha_L$ and some $\alpha_F$.
Let $S=\{s_1=0, s_2,s_3,\dots\},$ with $s_i<s_{i+1}$ for all $i \in {\mathbb N}$.
From $s_5=\lfloor 12\lambda_5\rfloor_{\alpha_L}=\lfloor
12\varphi_5\rfloor_{\alpha_F}$ we deduce that $\alpha_L\leq 0.8631$
and that $\alpha_F>0.5836$. Consequently, $s_3=19$. Now, $38=19+19$
must be in $S$ and so $\alpha_F>0.8328$. 
From this bound on $\alpha_F$ and the first bound on $\alpha_L$
we deduce that $S=H$.\end{proof}

\section{Conclusion}

Theorem~\ref{t:pythfractef} states that multiplicity $12$ is the largest value for which the discretization of the logarithmic monoid keeps the property that every interval is successively divided using the same ratio (in fact, the golden ratio), and also, it satisfies the property of being half-closed-pipe admissible.
The number $12$ is indeed the number of equal divisions of the octave in classical Western music.

Tuning systems dividing the octave in more than 12 parts, like quarters of
tone or Holdrian commas miss at least one of these properties.

Theorem~\ref{t:twelve} shows that the well-tempered harmonic semigroup $H$ in \eqref{eq:H} is the unique simultaneous discretization of multiplicity $12$ of the unique product-com\-pa\-ti\-ble tempered monoid and the unique nonbisectional fractal monoid of granularity $2$.
In other words, the well-tempered harmonic semigroup $H$ is the unique simultaneous discretization of multiplicity $12$ of the logarithmic monoid and the golden fractal monoid.

For its importance, 
we believe $H$ should be given a name
and we suggest to call it the {\em well-tempered harmonic semigroup}.

\nocite{Sethares}
\addcontentsline{toc}{section}{References}
\bibliographystyle{plain}

\paragraph{Acknowledgment.}
The author would like to thank Julio Fernández, Pilar Bayer, Shalom Eliahou, Alfons Reverté, Pere Casulleras, Marly Cormar, and Felix Gotti for many stimulating and clarifying discussions.
She would also like to thank the anonymous referees for their encouraging and helpful contributions. Finally she thanks Clifton Callender, Thomas Fiore, and Emmanuel Amiot.
This work was supported by the Catalan Government under grant 2017 SGR 00705 and by the Spanish Ministry of Economy and Competitivity under grant TIN2016-80250-R.

\mut{
\appendix

\section{Algorithms in {\sc Sage}}
\label{a:algorithm}

In this appendix we give an implementation in {\sc Sage} of an algorithm to construct fractal sequences and an algorithm to check whether an ordered sequence is indeed a monoid.

Before constructing a fractal sequence we need to construct its periods. This is what we do with the following algorithm.

\begin{alltt}\small
def fractalperiod(generatorset,periodindex):
  if (periodindex==0):
    return ([0])
  previousperiod=fractalperiod(generatorset,periodindex-1)+[periodindex]
  period=[]
  for i in [0..len(previousperiod)-2]:
    for j in [0..len(generatorset)-1]:
      period+=[1+previousperiod[i]+(generatorset[j]-1)*(previousperiod[i+1]-previousperiod[i])]
  return period
\end{alltt}

Once we have the periods defined, we can construct the fractal sequence up to a certain period. This is what we do with the following algorithm.

\begin{alltt}\small
def fractalsequence(generatorset,maxperiodindex):
  fractalseq=[]
  for i in [0..maxperiodindex]:
    fractalseq+=fractalperiod(generatorset,i)
  return fractalseq
\end{alltt}

Finally, we implemented the following algorithm to check whether an ordered sequence is a monoid up to its maximum given value.

\begin{alltt}\small
def ismonoid(seq):
  if (seq[0] != 0):
    return False
  for i in [1..len(seq)-1]:
    for j in [i..len(seq)-1]:
       if(seq[i]+seq[j] <= seq[len(seq)-1] and seq[i]+seq[j] not in seq):
         print seq[i],"+",seq[j], "does not belong"
         return False
  print "checked up to", seq[len(seq)-1]
  return True
\end{alltt}

As an example, we can construct the fractal monoid in Example~\ref{e:fractmonoidfirst} and check that it is indeed a fractal monoid.
The command
\begin{alltt}\small
fractalsequence([1,1.5,1.75],4)
\end{alltt}
returns

{\footnotesize\tt 
[0, 1, 1.50000000000000, 1.75000000000000, 2.00000000000000, 2.25000000000000, 2.37500000000000, 2.50000000000000, 2.62500000000000, 2.68750000000000, 2.75000000000000, 2.87500000000000, 2.93750000000000, 3.00000000000000, 3.12500000000000, 3.18750000000000, 3.25000000000000, 3.31250000000000, 3.34375000000000, 3.37500000000000, 3.43750000000000, 3.46875000000000, 3.50000000000000, 3.56250000000000, 3.59375000000000, 3.62500000000000, 3.65625000000000, 3.67187500000000, 3.68750000000000, 3.71875000000000, 3.73437500000000, 3.75000000000000, 3.81250000000000, 3.84375000000000, 3.87500000000000, 3.90625000000000, 3.92187500000000, 3.93750000000000, 3.96875000000000, 3.98437500000000, 4.00000000000000, 4.06250000000000, 4.09375000000000, 4.12500000000000, 4.15625000000000, 4.17187500000000, 4.18750000000000, 4.21875000000000, 4.23437500000000, 4.25000000000000, 4.28125000000000, 4.29687500000000, 4.31250000000000, 4.32812500000000, 4.33593750000000, 4.34375000000000, 4.35937500000000, 4.36718750000000, 4.37500000000000, 4.40625000000000, 4.42187500000000, 4.43750000000000, 4.45312500000000, 4.46093750000000, 4.46875000000000, 4.48437500000000, 4.49218750000000, 4.50000000000000, 4.53125000000000, 4.54687500000000, 4.56250000000000, 4.57812500000000, 4.58593750000000, 4.59375000000000, 4.60937500000000, 4.61718750000000, 4.62500000000000, 4.64062500000000, 4.64843750000000, 4.65625000000000, 4.66406250000000, 4.66796875000000, 4.67187500000000, 4.67968750000000, 4.68359375000000, 4.68750000000000, 4.70312500000000, 4.71093750000000, 4.71875000000000, 4.72656250000000, 4.73046875000000, 4.73437500000000, 4.74218750000000, 4.74609375000000, 4.75000000000000, 4.78125000000000, 4.79687500000000, 4.81250000000000, 4.82812500000000, 4.83593750000000, 4.84375000000000, 4.85937500000000, 4.86718750000000, 4.87500000000000, 4.89062500000000, 4.89843750000000, 4.90625000000000, 4.91406250000000, 4.91796875000000, 4.92187500000000, 4.92968750000000, 4.93359375000000, 4.93750000000000, 4.95312500000000, 4.96093750000000, 4.96875000000000, 4.97656250000000, 4.98046875000000, 4.98437500000000, 4.99218750000000, 4.99609375000000]
}
\\
while the command
\\
\begin{alltt}\small
ismonoid(fractalsequence([1,1.5,1.75],4))
\end{alltt}
returns
\begin{alltt}\small
checked up to 4.99609375000000
True
\end{alltt}

Now we can check that the fractal construction in Example~\ref{e:nonfractmonoid} does not give a fractal monoid. Indeed, the commands
\\
\begin{alltt}\small
fractalsequence([1,1+7/12],4)
ismonoid(fractalsequence([1,1+7/12],4))
\end{alltt}
return
\\
{\footnotesize\tt [0, 1, 19/12, 2, 337/144, 31/12, 407/144, 3, 5527/1728, 481/144, 6017/1728, 43/12, 6437/1728, 551/144, 6787/1728, 4, 85345/20736, 7255/1728, 88775/20736, 625/144, 91715/20736, 7745/1728, 94165/20736, 55/12, 96755/20736, 8165/1728, 99205/20736, 695/144, 101305/20736, 8515/1728, 10305520736]}
\begin{alltt}\small
19/12 + 19/12 does not belong
False
\end{alltt}

Finally, we can construct the golden fractal monoid, as defined in Definition~\ref{d:goldenfractal}, and check that is is indeed a fractal monoid, as proved in Theorem~\ref{t:goldenfractal}.
The commands
\begin{alltt}\small
fractalsequence([1,(1+sqrt(5))/2],4)
ismonoid(fractalsequence([1,(1+sqrt(5))/2],4))
\end{alltt}
return
\\
{\footnotesize\tt
 [0, 1, 1/2*sqrt(5) + 1/2, 2, 1/4*(sqrt(5) - 1)\^{ }2 + 2, 1/2*sqrt(5) + 3/2, -1/4*(sqrt(5) - 1)*(sqrt(5) - 3) + 1/2*sqrt(5) + 3/2, 3, 1/8*(sqrt(5) - 1)\^{ }3 + 3, 1/4*(sqrt(5) - 1)\^{ }2 + 3, -1/8*((sqrt(5) - 1)\^{ }2 - 2*sqrt(5) + 2)*(sqrt(5) - 1) + 1/4*(sqrt(5) - 1)\^{ }2 + 3, 1/2*sqrt(5) + 5/2, -1/8*(sqrt(5) - 1)\^{ }2*(sqrt(5) - 3) + 1/2*sqrt(5) + 5/2, -1/4*(sqrt(5) - 1)*(sqrt(5) - 3) + 1/2*sqrt(5) + 5/2, 1/8*((sqrt(5) - 1)*(sqrt(5) - 3) - 2*sqrt(5) + 6)*(sqrt(5) - 1) - 1/4*(sqrt(5) - 1)*(sqrt(5) - 3) + 1/2*sqrt(5) + 5/2, 4, 1/16*(sqrt(5) - 1)\^{ }4 + 4, 1/8*(sqrt(5) - 1)\^{ }3 + 4, 1/8*(sqrt(5) - 1)\^{ }3 - 1/16*((sqrt(5) - 1)\^{ }3 - 2*(sqrt(5) - 1)\^{ }2)*(sqrt(5) - 1) + 4, 1/4*(sqrt(5) - 1)\^{ }2 + 4, -1/16*((sqrt(5) - 1)\^{ }2 - 2*sqrt(5) + 2)*(sqrt(5) - 1)\^{ }2 + 1/4*(sqrt(5) - 1)\^{ }2 + 4, -1/8*((sqrt(5) - 1)\^{ }2 - 2*sqrt(5) + 2)*(sqrt(5) - 1) + 1/4*(sqrt(5) - 1)\^{ }2 + 4, 1/16*(((sqrt(5) - 1)\^{ }2 - 2*sqrt(5) + 2)*(sqrt(5) - 1) - 2*(sqrt(5) - 1)\^{ }2 + 4*sqrt(5) - 4)*(sqrt(5) - 1) - 1/8*((sqrt(5) - 1)\^{ }2 - 2*sqrt(5) + 2)*(sqrt(5) - 1) + 1/4*(sqrt(5) - 1)\^{ }2 + 4, 1/2*sqrt(5) + 7/2, -1/16*(sqrt(5) - 1)\^{ }3*(sqrt(5) - 3) + 1/2*sqrt(5) + 7/2, -1/8*(sqrt(5) - 1)\^{ }2*(sqrt(5) - 3) + 1/2*sqrt(5) + 7/2, -1/8*(sqrt(5) - 1)\^{ }2*(sqrt(5) - 3) + 1/16*((sqrt(5) - 1)\^{ }2*(sqrt(5) - 3) - 2*(sqrt(5) - 1)*(sqrt(5) - 3))*(sqrt(5) - 1) + 1/2*sqrt(5) + 7/2, -1/4*(sqrt(5) - 1)*(sqrt(5) - 3) + 1/2*sqrt(5) + 7/2, 1/16*((sqrt(5) - 1)*(sqrt(5) - 3) - 2*sqrt(5) + 6)*(sqrt(5) - 1)\^{ }2 - 1/4*(sqrt(5) - 1)*(sqrt(5) - 3) + 1/2*sqrt(5) + 7/2, 1/8*((sqrt(5) - 1)*(sqrt(5) - 3) - 2*sqrt(5) + 6)*(sqrt(5) - 1) - 1/4*(sqrt(5) - 1)*(sqrt(5) - 3) + 1/2*sqrt(5) + 7/2, -1/16*(((sqrt(5) - 1)*(sqrt(5) - 3) - 2*sqrt(5) + 6)*(sqrt(5) - 1) - 2*(sqrt(5) - 1)*(sqrt(5) - 3) + 4*sqrt(5) - 12)*(sqrt(5) - 1) + 1/8*((sqrt(5) - 1)*(sqrt(5) - 3) - 2*sqrt(5) + 6)*(sqrt(5) - 1) - 1/4*(sqrt(5) - 1)*(sqrt(5) - 3) + 1/2*sqrt(5) + 7/2]}
\\
{\footnotesize\tt checked up to -1/16*(((sqrt(5) - 1)*(sqrt(5) - 3) - 2*sqrt(5) + 6)*(sqrt(5) - 1) - 2*(sqrt(5) - 1)*(sqrt(5) - 3) + 4*sqrt(5) - 12)*(sqrt(5) - 1) + 1/8*((sqrt(5) - 1)*(sqrt(5) - 3) - 2*sqrt(5) + 6)*(sqrt(5) - 1) - 1/4*(sqrt(5) - 1)*(sqrt(5) - 3) + 1/2*sqrt(5) + 7/2}
\begin{alltt}\small
True
\end{alltt}

\section{Proof of Theorem~\ref{t:goldenfractal}}
\label{a:theorem}

In this appendix we prove Theorem~\ref{t:goldenfractal}, which states that the golden ratio, together with $1$, generates a fractal monoid (the {\em golden fractal monoid}) and that this monoid is, indeed, the unique nonbisectional fractal monoid of granularity $2$. 

\bigskip

\noindent{{\sc Theorem~\ref{t:goldenfractal}}}
  \begin{enumerate}
    \item The period $\{1,\phi\}$ generates a fractal monoid. 
    \item The unique nonbisectional normalized fractal monoid of granularity $2$ is exactly the fractal monoid generated by the period $\{1,\phi\}$. 
  \end{enumerate}

The proof of Theorem~\ref{t:goldenfractal} will be preceded by three lemmas. For the lemmas we need some notation. 
Given $p\in(0,1)$, let $q=1-p$ and define 
$$\begin{array}{rrcl}
 f_0:& \{0\} &\rightarrow& [0,1)\\ 
  & 0 &\mapsto& 0\\\\
  f_\ell:& {\mathbb N}_0\cap [0,2^\ell) &\rightarrow& [0,1)\\ 
&n&\mapsto& \left\{\begin{array}{ll}p f_{\ell -1}(n) & \mbox{ if }n<2^{\ell-1},\\
p +q f_{\ell -1}(n-2^{\ell-1}) & \mbox{ if }n\geq 2^{\ell-1},\end{array}\right. 
\end{array}$$
if $\ell\in{\mathbb N}$. 
Notice that $f_\ell$ depends on the choice of $p$. 
The next lemma is a consequence of the definitions and describes the relationship between the maps $f_\ell$ and fractal monoids. 

\begin{lemma}\label{l:struct}
If a tempered monoid $M=\{\mu_1,\mu_2,\mu_3,\dots\}$ with $\mu_i<\mu_{i+1}$ is fractal and has first period equal to 
$\{1,1+p\}$, then 
\begin{itemize}
  \item the elements of $M$ are exactly $\mu_i=\lfloor\log_2(i)\rfloor+f_{\lfloor\log_2(i)\rfloor}(i-2^{\lfloor\log_2(i)\rfloor})$,
\item 
  the $\ell$th period of $F$ is $\{\ell+f_\ell(0),\ell+f_\ell(1),\ell+f_\ell(2),\ell+f_\ell(3),\dots,\ell+f_\ell(2^\ell-1)\}$, with $\ell+f_\ell(0)<\ell+f_\ell(1)<\ell+f_\ell(2)<\ell+f_\ell(3)<\dots<\ell+f_\ell(2^\ell-1)$. 
  \end{itemize}
\end{lemma}

The next lemma states a simple result that will be used several times in 
the proofs of the following lemmas. It can be proved by induction. 

\begin{lemma}If $\ell,n\in{\mathbb N}_0$ with $\ell\geq 1$ and $n<2^{\ell-1}$, then $f_{\ell-1}(n)=f_{\ell}(2n)$. 
\end{lemma}

\mut{
\begin{proof}
  If $\ell=1$ the result is obvious. For $\ell>1$ we will proceed by induction. 
  \begin{eqnarray*}
    f_\ell(2n)&=&\left\{\begin{array}{ll}p f_{\ell -1}(2n) & \mbox{ if }2n<2^{\ell-1},\\p +q f_{\ell -1}(2n-2^{\ell-1}) & \mbox{ if }2n\geq 2^{\ell-1},\end{array}\right. 
    \\&=&\left\{\begin{array}{ll}p f_{\ell -1}(2n) & \mbox{ if }n<2^{\ell-2},\\p +q f_{\ell -1}(2n-2^{\ell-1}) & \mbox{ if }n\geq 2^{\ell-2},\end{array}\right. 
  \end{eqnarray*}
  Now, by the induction hypothesis, this equals 
  \begin{eqnarray*}
    \phantom{f_\ell(2n)}&\phantom{=}&\left\{\begin{array}{ll}p f_{\ell -2}(n) & \mbox{ if }n<2^{\ell-2},\\p +q f_{\ell -2}(n-2^{\ell-2}) & \mbox{ if }n\geq 2^{\ell-2},\end{array}\right.\\
&=&f_{\ell-1}(n). 
  \end{eqnarray*}
\end{proof}
}

Recall that the golden ratio is $\phi=\frac{1+\sqrt{5}}{2}$. 
In the next lemma we take $p=\phi-1=\frac{-1+\sqrt{5}}{2}=0.618033\dots$. 
 We use that, in this case, $p^2=1-p$, $1=p+p^2$ and $2p=1+p^3$. 

\begin{lemma}\label{l:p}
  Let $p=\phi-1$, $q=1-p$, and let $\ell,n\in{\mathbb N}_0$ with 
  $\ell>0$ and $n<2^\ell$. The sum 
  $p+f_\ell(n)$ is one of 
\begin{itemize}
  \item $f_{\ell+1}(c)$ for some $0\leq c<2^{\ell+1}$,
  \item $1+pf_{\ell+1}(d)$ for some $0\leq d<2^{\ell+1}$. 
\end{itemize}
\end{lemma}

\begin{proof}

    If $\ell=1$ then $n$ is either $0$ or $1$ and $p+f_1(0)=p+0=p=f_1(1)$, while $p+f_1(1)=p+p=1+p^3=1+pf_2(1)$. 
Suppose that $\ell>1$ and assume that the lemma is true for $\ell-1$.  

If $n<2^{\ell-2}$, then $p+f_\ell(n)=p+p f_{\ell -1}(n)=p+p^2 f_{\ell-2}(n)=f_{\ell-1}(n+2^{\ell-2})=f_{\ell+1}(2^2n+2^{\ell})$, with $0\leq 2^2n+2^{\ell}<2^{\ell+1}$. 

If $2^{\ell-2}\leq n<2^{\ell-1}$, then $p+f_\ell(n)=p+pf_{\ell-1}(n)=p+p^2+p^3 f_{\ell-2}(n-2^{\ell-2})=1+p^3f_{\ell-2}(n-2^{\ell-2})=1+pf_{\ell}(n-2^{\ell-2})=1+pf_{\ell+1}(2n-2^{\ell-1})  $, with $0\leq 2n-2^{\ell-1}<2^{\ell+1}$. 

If $n\geq 2^{\ell-1}$, then $p+f_\ell(n)=2p +p^2 f_{\ell -1}(n-2^{\ell-1}) = 1+p^3 +p^2 f_{\ell -1}(n-2^{\ell-1}) = 1+p^2(p+f_{\ell-1}(n-2^{\ell-1}))$ with $n-2^{\ell-1}<2^{\ell-1}$. 
By the induction hypothesis, this either equals $1+p^2f_{\ell}(c')=1+pf_{\ell+1}(c')$ with $c'<2^\ell$, and so, with $c'<2^{\ell+1}$, or $1+p^2(1+pf_{\ell}(d'))=1+p(p+p^2f_{\ell}(d'))=1+pf_{\ell+1}(d'+2^\ell)$ with $d'<2^\ell$, and so, with $d'+2^\ell<2^{\ell+1}$. 

\end{proof}

\begin{lemma}\label{l:key}
Let $p=\phi-1$ and $q=1-p$. 
If $i,j,a,b\in{\mathbb N}_0$, with $0\leq a<2^i$ and $0\leq b<2^j$, then 
$f_i(a)+f_j(b)$
is one of 
\begin{itemize}
  \item $f_{i+j}(c)$ for some $0\leq c<2^{i+j}$,
  \item $1+f_{i+j+1}(d)$ for some $0\leq d<2^{i+j+1}$. 
    \end{itemize}
\end{lemma}

\begin{proof}
We proceed by induction on $i+j$. 
If $i+j$ equals $0$ then the result is obvious. 
Suppose that $i+j>0$. 
If one of $i$ and $j$ is $0$ then the result is also obvious,
so, we can assume that both $i$ and $j$ are non-zero. 
On one hand,
$f_i(a)=pf_{i-1}(a')$ or 
$f_i(a)=p+qf_{i-1}(a')$ for some $a'<2^{i-1}$. 
On the other hand,
$f_j(b)=pf_{j-1}(b')$ or 
$f_j(b)=p+qf_{j-1}(b')$ for some $b'<2^{j-1}$. 
So, one of the next cases holds. 
\begin{enumerate}
\item $f_i(a)+f_j(b)=pf_{i-1}(a')+pf_{j-1}(b')=p(f_{i-1}(a')+f_{j-1}(b'))$
  for some $a'<2^{i-1}$ and some $b'<2^{j-1}$. 
  By the induction hypothesis, this equals one of 
  \begin{itemize}
  \item $pf_{i+j-2}(c)$ for some $0\leq c<2^{i+j-2}$,
  \item $p(1+f_{i+j-1}(d))$ for some $0\leq d<2^{i+j-1}$. 
  \end{itemize}
  On one hand, $pf_{i+j-2}(c)=f_{i+j-1}(c)=f_{i+j}(2c)$, with $2c<2^{i+j}$. 
  On the other hand, $p(1+f_{i+j-1}(d))$ can be either 
  $p(1+pf_{i+j-2}(d'))$ or $p(1+p+qf_{i+j-2}(d'))$ for some $d'<2^{i+j-2}$. 
  But $p(1+pf_{i+j-2}(d'))=p+p^2f_{i+j-2}(d')=f_{i+j-1}(d'+2^{i+j-2})=f_{i+j}(2d'+2^{i+j-1})$ with $2d'+2^{i+j-1}<2^{i+j}$,
  while $p(1+p+qf_{i+j-2}(d'))=p+p^2+p^3f_{i+j-2}(d')=1+f_{i+j+1}(d')$, with $d'<2^{i+j+1}$. 

\item $f_i(a)+f_j(b)=pf_{i-1}(a')+p+qf_{j-1}(b')$
  for some $a'<2^{i-1}$ and some $b'<2^{j-1}$. 
  If $i=1$, then $a'=0$ and $f_i(a)+f_j(b)=p+qf_{j-1}(b')=f_j(b'+2^{j-1})=f_{i+j}(2^ib'+2^{i+j-1})$, with $2^ib'+2^{i+j-1}<2^{i+j}$. So, we can assume $i>1$. 
  Now, by the definition of $f$, the sum $f_i(a)+f_j(b)$ equals one of 
  \begin{itemize}
  \item $p^2f_{i-2}(a')+p+qf_{j-1}(b')=p+p^2(f_{i-2}(a')+f_{j-1}(b'))$ if $a'<2^{i-2}$,
  \item 
 $p(p+qf_{i-2}(a'-2^{i-2}))+p+qf_{j-1}(b')=1+p^2(pf_{i-2}(a'-2^{i-2})+f_{j-1}(b'))$
    if $a'\geq 2^{i-2}$. 
  \end{itemize}
  The first sum, by the induction hypothesis, is either $p+q(f_{i+j-3}(c'))=f_{i+j-2}(c'+2^{i+j-3})=f_{i+j}(2^2c'+2^{i+j-1})$ with $c'<2^{i+j-3}$ and so, with $2^2c'+2^{i+j-1}<2^{i+j}$, or $p+q(1+f_{i+j-2}(d'))=1+p^2f_{i+j-2}(d')=1+f_{i+j}(d')=1+f_{i+1+1}(2d')$, with $d'<2^{i+j-2}$ (and so with $2d'<2^{i+j+1}$). 

    To analyze the second sum, notice that $pf_{i-2}(a'-2^{i-2})<p$ and, so, $pf_{i-2}(a'-2^{i-2})+f_{j-1}(b')<1+p$. Now, taking this into account, and applying the induction hypothesis, we have 
that 
$1+p^2(pf_{i-2}(a'-2^{i-2})+f_{j-1}(b'))$ is either 
$1+p^2(f_{i+j-2}(c'))=1+f_{i+j}(c')=1+f_{i+j+1}(2c')$ for some $c'<2^{i+j-2}$ (and so $2c'<2^{i+j+1}$) 
or 
$1+p^2(1+pf_{i+j-2}(c''))=1+p(p+qf_{i+j-2}(c''))=1+pf_{i+j-1}(c''+2^{i+j-2})=1+f_{i+j}(c''+2^{i+j-2})=1+f_{i+j+1}(2c''+2^{i+j-1})$ for some $c''<2^{i+j-2}$ (and so $2c''+2^{i+j-1}<2^{i+j+1}$). 
  
\item $f_i(a)+f_j(b)=p+qf_{i-1}(a')+pf_{j-1}(b')$ for some $a'<2^{i-1}$ and some $b'<2^{j-1}$. This case can be proved as the previous point. 
\item $f_i(a)+f_j(b)=p+qf_{i-1}(a')+p+qf_{j-1}(b')=2p+q(f_{i-1}(a')+f_{j-1}(b'))$ for some $a'<2^{i-1}$ and some $b'<2^{j-1}$. 
By the induction hypothesis, this equals one of 
\begin{itemize}
\item $2p+qf_{i+j-2}(c')$ for some $0\leq c'<2^{i+j-2}$,
\item $2p+q(1+f_{i+j-1}(d'))$ for some $0\leq d'<2^{i+j-1}$. 
\end{itemize}
On one hand, $2p+qf_{i+j-2}(c')=p+f_{i+j-1}(c'+2^{i+j-2})$ and the result follows by Lemma~\ref{l:p}. 

On the other hand,
$2p+q(1+f_{i+j-1}(d'))=1+p+qf_{i+j-1}(d')=1+f_{i+j}(d'+2^{i+j-1})=1+f_{i+j+1}(2d'+2^{i+j})$ with $2d'+2^{i+j}<2^{i+j+1}$. 
\end{enumerate}
\end{proof}

Now we are ready to prove Theorem~\ref{t:goldenfractal}. 

\begin{proof}
  \begin{enumerate}
  \item One needs to see that, for the case when $p=\phi-1$, the set described in Lemma~\ref{l:struct} is closed under addition. This is a direct consequence of Lemma~\ref{l:key}. 
    \item 
      Suppose that $M$ is a normalized nonbisectional fractal monoid of granularity $2$. Then its first period is $\{1,1+p\}$ for some $p$ with $0<p<1$. 
      By Lemma~\ref{l:struct}, the second and third periods of the tempered monoid must be 
  $$\begin{array}{l}
    \{2,2+p^2,2+p,2+2p-p^2\},\\
      \{3,3+p^3,3+p^2,3+2p^2-p^3,3+p,3+p+p^2-p^3,3+2p-p^2,3+3p-3p^2+p^3\},\\
  \end{array}$$
  where the elements in each period are presented in increasing order. 
  
  If $M$ is nonbisectional then $p\neq 0.5$. 
  For $M$ to be closed under addition, it must hold that $(1+p)+(1+p)=2+2p\in M$. 
  
  If $p<0.5$, then $2+p<2+2p<3$. Looking at the elements of the second period, one can deduce that $2+2p=2+2p-p^2$, leading to $p=0$, which is out of the range. 

  Then, $p$ must be larger than $0.5$. In this case, $3<2+2p<3+p$. Looking at the elements of the third period, one can deduce that $2+2p$ is either 
$3+p^3$, $3+p^2$, or $3+2p^2-p^3$. This leads to the equations $p^3-2p+1=(p^2+p-1)(p-1)=0$, $p^2-2p+1=(p-1)^2=0$ or $p^3-2p^2+2p-1=(p^2-p+1)(p-1)=0$. Among these equations, the unique having a real solution in the range $0.5 < p < 1$ is the first one, being the solution $p=\frac{-1+\sqrt{5}}{2}=\phi-1$. Hence, $1+p=\phi$. 
  \end{enumerate}
\end{proof}

\newpage
\section{Tables}
\label{a:tables}

\rotatebox{90}{\resizebox{.8\textheight}{!}{\begin{tabular}{ccccc}
$\begin{array}{ll}
9\lambda_{1}=0.0000	& 9\varphi_{1}=0.0000\\
9\lambda_{2}=9.0000	& 9\varphi_{2}=9.0000\\
9\lambda_{3}=14.2647	& 9\varphi_{3}=14.5623\\
9\lambda_{4}=18.0000	& 9\varphi_{4}=18.0000\\
9\lambda_{5}=20.8974	& 9\varphi_{5}=21.4377\\
9\lambda_{6}=23.2647	& 9\varphi_{6}=23.5623\\
9\lambda_{7}=25.2662	& 9\varphi_{7}=25.6869\\
9\lambda_{8}=27.0000	& 9\varphi_{8}=27.0000\\
9\lambda_{9}=28.5293	& 9\varphi_{9}=29.1246\\
9\lambda_{10}=29.8974	& 9\varphi_{10}=30.4377\\
9\lambda_{11}=31.1349	& 9\varphi_{11}=31.7508\\
9\lambda_{12}=32.2647	& 9\varphi_{12}=32.5623\\
9\lambda_{13}=33.3040	& 9\varphi_{13}=33.8754\\
9\lambda_{14}=34.2662	& 9\varphi_{14}=34.6869\\
9\lambda_{15}=35.1620	& 9\varphi_{15}=35.4984\\
9\lambda_{16}=36.0000	& 9\varphi_{16}=36.0000\\
9\lambda_{17}=36.7872	& 9\varphi_{17}=37.3131\\
9\lambda_{18}=37.5293	& 9\varphi_{18}=38.1246\\
9\lambda_{19}=38.2313	& 9\varphi_{19}=38.9361\\
9\lambda_{20}=38.8974	& 9\varphi_{20}=39.4377\\
9\lambda_{21}=39.5309	& 9\varphi_{21}=40.2492\\
9\lambda_{22}=40.1349	& 9\varphi_{22}=40.7508\\
9\lambda_{23}=40.7121	& 9\varphi_{23}=41.2523\\
9\lambda_{24}=41.2647	& 9\varphi_{24}=41.5623\\
9\lambda_{25}=41.7947	& 9\varphi_{25}=42.3738\\
9\lambda_{26}=42.3040	& 9\varphi_{26}=42.8754\\
9\lambda_{27}=42.7940	& 9\varphi_{27}=43.3769\\
9\lambda_{28}=43.2662	& 9\varphi_{28}=43.6869\\
9\lambda_{29}=43.7218	& 9\varphi_{29}=44.1885\\
9\lambda_{30}=44.1620	& 9\varphi_{30}=44.4984\\
9\lambda_{31}=44.5878	& 9\varphi_{31}=44.8084\\
9\lambda_{32}=45.0000	& 9\varphi_{32}=45.0000\\
9\lambda_{33}=45.3995	& 9\varphi_{33}=45.8115\\
9\lambda_{34}=45.7872	& 9\varphi_{34}=46.3131\\
9\lambda_{35}=46.1635	& 9\varphi_{35}=46.8146\\
9\lambda_{36}=46.5293	& 9\varphi_{36}=47.1246\\
9\lambda_{37}=46.8851	& 9\varphi_{37}=47.6262\\
9\lambda_{38}=47.2313	& 9\varphi_{38}=47.9361\\
9\lambda_{39}=47.5686	& 9\varphi_{39}=48.2461\\
9\lambda_{40}=47.8974	& 9\varphi_{40}=48.4377\\
9\lambda_{41}=48.2180	& 9\varphi_{41}=48.9392\\
9\lambda_{42}=48.5309	& 9\varphi_{42}=49.2492\\
9\lambda_{43}=48.8364	& 9\varphi_{43}=49.5592\\
9\lambda_{44}=49.1349	& 9\varphi_{44}=49.7508\\
9\lambda_{45}=49.4267	& 9\varphi_{45}=50.0608\\
9\lambda_{46}=49.7121	& 9\varphi_{46}=50.2523\\
9\lambda_{47}=49.9913	& 9\varphi_{47}=50.4439\\
9\lambda_{48}=50.2647	& 9\varphi_{48}=50.5623\\
9\lambda_{49}=50.5324	& 9\varphi_{49}=51.0639\\
9\lambda_{50}=50.7947	& 9\varphi_{50}=51.3738\\
\end{array}$
&
$\begin{array}{ll}
11\lambda_{1}=0.0000	& 11\varphi_{1}=0.0000\\
11\lambda_{2}=11.0000	& 11\varphi_{2}=11.0000\\
11\lambda_{3}=17.4346	& 11\varphi_{3}=17.7984\\
11\lambda_{4}=22.0000	& 11\varphi_{4}=22.0000\\
11\lambda_{5}=25.5412	& 11\varphi_{5}=26.2016\\
11\lambda_{6}=28.4346	& 11\varphi_{6}=28.7984\\
11\lambda_{7}=30.8809	& 11\varphi_{7}=31.3951\\
11\lambda_{8}=33.0000	& 11\varphi_{8}=33.0000\\
11\lambda_{9}=34.8692	& 11\varphi_{9}=35.5967\\
11\lambda_{10}=36.5412	& 11\varphi_{10}=37.2016\\
11\lambda_{11}=38.0537	& 11\varphi_{11}=38.8065\\
11\lambda_{12}=39.4346	& 11\varphi_{12}=39.7984\\
11\lambda_{13}=40.7048	& 11\varphi_{13}=41.4033\\
11\lambda_{14}=41.8809	& 11\varphi_{14}=42.3951\\
11\lambda_{15}=42.9758	& 11\varphi_{15}=43.3870\\
11\lambda_{16}=44.0000	& 11\varphi_{16}=44.0000\\
11\lambda_{17}=44.9621	& 11\varphi_{17}=45.6049\\
11\lambda_{18}=45.8692	& 11\varphi_{18}=46.5967\\
11\lambda_{19}=46.7272	& 11\varphi_{19}=47.5886\\
11\lambda_{20}=47.5412	& 11\varphi_{20}=48.2016\\
11\lambda_{21}=48.3155	& 11\varphi_{21}=49.1935\\
11\lambda_{22}=49.0537	& 11\varphi_{22}=49.8065\\
11\lambda_{23}=49.7592	& 11\varphi_{23}=50.4195\\
11\lambda_{24}=50.4346	& 11\varphi_{24}=50.7984\\
11\lambda_{25}=51.0824	& 11\varphi_{25}=51.7902\\
11\lambda_{26}=51.7048	& 11\varphi_{26}=52.4033\\
11\lambda_{27}=52.3038	& 11\varphi_{27}=53.0163\\
11\lambda_{28}=52.8809	& 11\varphi_{28}=53.3951\\
11\lambda_{29}=53.4378	& 11\varphi_{29}=54.0081\\
11\lambda_{30}=53.9758	& 11\varphi_{30}=54.3870\\
11\lambda_{31}=54.4962	& 11\varphi_{31}=54.7659\\
11\lambda_{32}=55.0000	& 11\varphi_{32}=55.0000\\
11\lambda_{33}=55.4883	& 11\varphi_{33}=55.9919\\
11\lambda_{34}=55.9621	& 11\varphi_{34}=56.6049\\
11\lambda_{35}=56.4221	& 11\varphi_{35}=57.2179\\
11\lambda_{36}=56.8692	& 11\varphi_{36}=57.5967\\
11\lambda_{37}=57.3040	& 11\varphi_{37}=58.2098\\
11\lambda_{38}=57.7272	& 11\varphi_{38}=58.5886\\
11\lambda_{39}=58.1394	& 11\varphi_{39}=58.9675\\
11\lambda_{40}=58.5412	& 11\varphi_{40}=59.2016\\
11\lambda_{41}=58.9331	& 11\varphi_{41}=59.8146\\
11\lambda_{42}=59.3155	& 11\varphi_{42}=60.1935\\
11\lambda_{43}=59.6889	& 11\varphi_{43}=60.5724\\
11\lambda_{44}=60.0537	& 11\varphi_{44}=60.8065\\
11\lambda_{45}=60.4104	& 11\varphi_{45}=61.1854\\
11\lambda_{46}=60.7592	& 11\varphi_{46}=61.4195\\
11\lambda_{47}=61.1005	& 11\varphi_{47}=61.6537\\
11\lambda_{48}=61.4346	& 11\varphi_{48}=61.7984\\
11\lambda_{49}=61.7618	& 11\varphi_{49}=62.4114\\
11\lambda_{50}=62.0824	& 11\varphi_{50}=62.7902\\
\end{array}$
&
$\begin{array}{ll}
12\lambda_{1}=0.0000	& 12\varphi_{1}=0.0000\\
12\lambda_{2}=12.0000	& 12\varphi_{2}=12.0000\\
12\lambda_{3}=19.0196	& 12\varphi_{3}=19.4164\\
12\lambda_{4}=24.0000	& 12\varphi_{4}=24.0000\\
12\lambda_{5}=27.8631	& 12\varphi_{5}=28.5836\\
12\lambda_{6}=31.0196	& 12\varphi_{6}=31.4164\\
12\lambda_{7}=33.6883	& 12\varphi_{7}=34.2492\\
12\lambda_{8}=36.0000	& 12\varphi_{8}=36.0000\\
12\lambda_{9}=38.0391	& 12\varphi_{9}=38.8328\\
12\lambda_{10}=39.8631	& 12\varphi_{10}=40.5836\\
12\lambda_{11}=41.5132	& 12\varphi_{11}=42.3344\\
12\lambda_{12}=43.0196	& 12\varphi_{12}=43.4164\\
12\lambda_{13}=44.4053	& 12\varphi_{13}=45.1672\\
12\lambda_{14}=45.6883	& 12\varphi_{14}=46.2492\\
12\lambda_{15}=46.8827	& 12\varphi_{15}=47.3313\\
12\lambda_{16}=48.0000	& 12\varphi_{16}=48.0000\\
12\lambda_{17}=49.0496	& 12\varphi_{17}=49.7508\\
12\lambda_{18}=50.0391	& 12\varphi_{18}=50.8328\\
12\lambda_{19}=50.9751	& 12\varphi_{19}=51.9149\\
12\lambda_{20}=51.8631	& 12\varphi_{20}=52.5836\\
12\lambda_{21}=52.7078	& 12\varphi_{21}=53.6656\\
12\lambda_{22}=53.5132	& 12\varphi_{22}=54.3344\\
12\lambda_{23}=54.2827	& 12\varphi_{23}=55.0031\\
12\lambda_{24}=55.0196	& 12\varphi_{24}=55.4164\\
12\lambda_{25}=55.7263	& 12\varphi_{25}=56.4984\\
12\lambda_{26}=56.4053	& 12\varphi_{26}=57.1672\\
12\lambda_{27}=57.0587	& 12\varphi_{27}=57.8359\\
12\lambda_{28}=57.6883	& 12\varphi_{28}=58.2492\\
12\lambda_{29}=58.2958	& 12\varphi_{29}=58.9180\\
12\lambda_{30}=58.8827	& 12\varphi_{30}=59.3313\\
12\lambda_{31}=59.4504	& 12\varphi_{31}=59.7446\\
12\lambda_{32}=60.0000	& 12\varphi_{32}=60.0000\\
12\lambda_{33}=60.5327	& 12\varphi_{33}=61.0820\\
12\lambda_{34}=61.0496	& 12\varphi_{34}=61.7508\\
12\lambda_{35}=61.5514	& 12\varphi_{35}=62.4195\\
12\lambda_{36}=62.0391	& 12\varphi_{36}=62.8328\\
12\lambda_{37}=62.5134	& 12\varphi_{37}=63.5016\\
12\lambda_{38}=62.9751	& 12\varphi_{38}=63.9149\\
12\lambda_{39}=63.4248	& 12\varphi_{39}=64.3282\\
12\lambda_{40}=63.8631	& 12\varphi_{40}=64.5836\\
12\lambda_{41}=64.2906	& 12\varphi_{41}=65.2523\\
12\lambda_{42}=64.7078	& 12\varphi_{42}=65.6656\\
12\lambda_{43}=65.1152	& 12\varphi_{43}=66.0789\\
12\lambda_{44}=65.5132	& 12\varphi_{44}=66.3344\\
12\lambda_{45}=65.9022	& 12\varphi_{45}=66.7477\\
12\lambda_{46}=66.2827	& 12\varphi_{46}=67.0031\\
12\lambda_{47}=66.6551	& 12\varphi_{47}=67.2585\\
12\lambda_{48}=67.0196	& 12\varphi_{48}=67.4164\\
12\lambda_{49}=67.3765	& 12\varphi_{49}=68.0851\\
12\lambda_{50}=67.7263	& 12\varphi_{50}=68.4984\\
\end{array}$
&
$\begin{array}{ll}
13\lambda_{1}=0.0000	& 13\varphi_{1}=0.0000\\
13\lambda_{2}=13.0000	& 13\varphi_{2}=13.0000\\
13\lambda_{3}=20.6045	& 13\varphi_{3}=21.0344\\
13\lambda_{4}=26.0000	& 13\varphi_{4}=26.0000\\
13\lambda_{5}=30.1851	& 13\varphi_{5}=30.9656\\
13\lambda_{6}=33.6045	& 13\varphi_{6}=34.0344\\
13\lambda_{7}=36.4956	& 13\varphi_{7}=37.1033\\
13\lambda_{8}=39.0000	& 13\varphi_{8}=39.0000\\
13\lambda_{9}=41.2090	& 13\varphi_{9}=42.0689\\
13\lambda_{10}=43.1851	& 13\varphi_{10}=43.9656\\
13\lambda_{11}=44.9726	& 13\varphi_{11}=45.8622\\
13\lambda_{12}=46.6045	& 13\varphi_{12}=47.0344\\
13\lambda_{13}=48.1057	& 13\varphi_{13}=48.9311\\
13\lambda_{14}=49.4956	& 13\varphi_{14}=50.1033\\
13\lambda_{15}=50.7896	& 13\varphi_{15}=51.2755\\
13\lambda_{16}=52.0000	& 13\varphi_{16}=52.0000\\
13\lambda_{17}=53.1370	& 13\varphi_{17}=53.8967\\
13\lambda_{18}=54.2090	& 13\varphi_{18}=55.0689\\
13\lambda_{19}=55.2231	& 13\varphi_{19}=56.2411\\
13\lambda_{20}=56.1851	& 13\varphi_{20}=56.9656\\
13\lambda_{21}=57.1001	& 13\varphi_{21}=58.1378\\
13\lambda_{22}=57.9726	& 13\varphi_{22}=58.8622\\
13\lambda_{23}=58.8063	& 13\varphi_{23}=59.5867\\
13\lambda_{24}=59.6045	& 13\varphi_{24}=60.0344\\
13\lambda_{25}=60.3701	& 13\varphi_{25}=61.2067\\
13\lambda_{26}=61.1057	& 13\varphi_{26}=61.9311\\
13\lambda_{27}=61.8135	& 13\varphi_{27}=62.6556\\
13\lambda_{28}=62.4956	& 13\varphi_{28}=63.1033\\
13\lambda_{29}=63.1538	& 13\varphi_{29}=63.8278\\
13\lambda_{30}=63.7896	& 13\varphi_{30}=64.2755\\
13\lambda_{31}=64.4046	& 13\varphi_{31}=64.7233\\
13\lambda_{32}=65.0000	& 13\varphi_{32}=65.0000\\
13\lambda_{33}=65.5771	& 13\varphi_{33}=66.1722\\
13\lambda_{34}=66.1370	& 13\varphi_{34}=66.8967\\
13\lambda_{35}=66.6807	& 13\varphi_{35}=67.6211\\
13\lambda_{36}=67.2090	& 13\varphi_{36}=68.0689\\
13\lambda_{37}=67.7229	& 13\varphi_{37}=68.7933\\
13\lambda_{38}=68.2231	& 13\varphi_{38}=69.2411\\
13\lambda_{39}=68.7102	& 13\varphi_{39}=69.6888\\
13\lambda_{40}=69.1851	& 13\varphi_{40}=69.9656\\
13\lambda_{41}=69.6482	& 13\varphi_{41}=70.6900\\
13\lambda_{42}=70.1001	& 13\varphi_{42}=71.1378\\
13\lambda_{43}=70.5414	& 13\varphi_{43}=71.5855\\
13\lambda_{44}=70.9726	& 13\varphi_{44}=71.8622\\
13\lambda_{45}=71.3941	& 13\varphi_{45}=72.3100\\
13\lambda_{46}=71.8063	& 13\varphi_{46}=72.5867\\
13\lambda_{47}=72.2097	& 13\varphi_{47}=72.8634\\
13\lambda_{48}=72.6045	& 13\varphi_{48}=73.0344\\
13\lambda_{49}=72.9912	& 13\varphi_{49}=73.7589\\
13\lambda_{50}=73.3701	& 13\varphi_{50}=74.2067\\
\end{array}$
&
$\begin{array}{ll}
18\lambda_{1}=0.0000	& 18\varphi_{1}=0.0000\\
18\lambda_{2}=18.0000	& 18\varphi_{2}=18.0000\\
18\lambda_{3}=28.5293	& 18\varphi_{3}=29.1246\\
18\lambda_{4}=36.0000	& 18\varphi_{4}=36.0000\\
18\lambda_{5}=41.7947	& 18\varphi_{5}=42.8754\\
18\lambda_{6}=46.5293	& 18\varphi_{6}=47.1246\\
18\lambda_{7}=50.5324	& 18\varphi_{7}=51.3738\\
18\lambda_{8}=54.0000	& 18\varphi_{8}=54.0000\\
18\lambda_{9}=57.0587	& 18\varphi_{9}=58.2492\\
18\lambda_{10}=59.7947	& 18\varphi_{10}=60.8754\\
18\lambda_{11}=62.2698	& 18\varphi_{11}=63.5016\\
18\lambda_{12}=64.5293	& 18\varphi_{12}=65.1246\\
18\lambda_{13}=66.6079	& 18\varphi_{13}=67.7508\\
18\lambda_{14}=68.5324	& 18\varphi_{14}=69.3738\\
18\lambda_{15}=70.3240	& 18\varphi_{15}=70.9969\\
18\lambda_{16}=72.0000	& 18\varphi_{16}=72.0000\\
18\lambda_{17}=73.5743	& 18\varphi_{17}=74.6262\\
18\lambda_{18}=75.0587	& 18\varphi_{18}=76.2492\\
18\lambda_{19}=76.4627	& 18\varphi_{19}=77.8723\\
18\lambda_{20}=77.7947	& 18\varphi_{20}=78.8754\\
18\lambda_{21}=79.0617	& 18\varphi_{21}=80.4984\\
18\lambda_{22}=80.2698	& 18\varphi_{22}=81.5016\\
18\lambda_{23}=81.4241	& 18\varphi_{23}=82.5047\\
18\lambda_{24}=82.5293	& 18\varphi_{24}=83.1246\\
18\lambda_{25}=83.5894	& 18\varphi_{25}=84.7477\\
18\lambda_{26}=84.6079	& 18\varphi_{26}=85.7508\\
18\lambda_{27}=85.5880	& 18\varphi_{27}=86.7539\\
18\lambda_{28}=86.5324	& 18\varphi_{28}=87.3738\\
18\lambda_{29}=87.4437	& 18\varphi_{29}=88.3769\\
18\lambda_{30}=88.3240	& 18\varphi_{30}=88.9969\\
18\lambda_{31}=89.1755	& 18\varphi_{31}=89.6168\\
18\lambda_{32}=90.0000	& 18\varphi_{32}=90.0000\\
18\lambda_{33}=90.7991	& 18\varphi_{33}=91.6231\\
18\lambda_{34}=91.5743	& 18\varphi_{34}=92.6262\\
18\lambda_{35}=92.3271	& 18\varphi_{35}=93.6293\\
18\lambda_{36}=93.0587	& 18\varphi_{36}=94.2492\\
18\lambda_{37}=93.7702	& 18\varphi_{37}=95.2523\\
18\lambda_{38}=94.4627	& 18\varphi_{38}=95.8723\\
18\lambda_{39}=95.1372	& 18\varphi_{39}=96.4922\\
18\lambda_{40}=95.7947	& 18\varphi_{40}=96.8754\\
18\lambda_{41}=96.4359	& 18\varphi_{41}=97.8785\\
18\lambda_{42}=97.0617	& 18\varphi_{42}=98.4984\\
18\lambda_{43}=97.6728	& 18\varphi_{43}=99.1184\\
18\lambda_{44}=98.2698	& 18\varphi_{44}=99.5016\\
18\lambda_{45}=98.8534	& 18\varphi_{45}=100.1215\\
18\lambda_{46}=99.4241	& 18\varphi_{46}=100.5047\\
18\lambda_{47}=99.9826	& 18\varphi_{47}=100.8878\\
18\lambda_{48}=100.5293	& 18\varphi_{48}=101.1246\\
18\lambda_{49}=101.0648	& 18\varphi_{49}=102.1277\\
18\lambda_{50}=101.5894	& 18\varphi_{50}=102.7477\\
\end{array}$
\end{tabular}}}
}

\end{document}